\numberwithin{equation}{section}
\title{\huge The norm of polynomials in large random and deterministic matrices}
\author{Camille Male\footnote{Ecole Normale Sup\'erieure de Lyon, Unit\'e de Math\'ematiques pures et appliqu\'ees, UMR 5669, 46 all\'ee d'Italie, 69364 Lyon Cedex 07, France. camille.male@umpa.ens-lyon.fr}\\ \\{\it with an appendix by }\\ \\Dimitri Shlyakhtenko}
\date{}
\newtheorem{Th}{Theorem}[section]
\newtheorem{Def}[Th]{Definition}
\newtheorem{Prop}[Th]{Proposition}
\newtheorem{Lem}[Th]{Lemma}
\newtheorem{Cor}[Th]{Corollary}
\renewcommand\leq\leqslant
\renewcommand\geq\geqslant
\def\Tr{\textrm{Tr}}
\def\sp{\mathrm{Sp }}
\def\esp{\mathbb E}
\def\var{\mathbb V\textrm{ar}}
\def\etc{,\ldots ,}
\def\traceN{ ( \mathrm{id}_k \otimes \tau_N ) }
\def\trace{ ( \mathrm{id}_k \otimes \tau ) }
\def\ttrN{ ( \tau_k \otimes \tau_N ) }
\def\ttr{ ( \tau_k \otimes \tau ) }
\def\UnN{ \otimes \mathbf 1_N }
\def\Un{ \otimes \mathbf 1 }
\def\MkMn{\mathrm{M}_k(\mathbb C)   \otimes   \mathrm{M}_N(\mathbb C)}
\def\Mk{\mathrm{M}_k(\mathbb C)}
\def\MN{\mathrm{M}_N(\mathbb C) }
\def\MkA{\mathrm{M}_k(\mathbb C)\otimes \mathcal A}
\def\toN{^{(N)}}
\def\toNs{^{(N)*}}
\def\xyy{\mathbf x, \mathbf y, \mathbf y^*}
\def\XYY{\mathbf X_N, \mathbf Y_N, \mathbf Y_N^*}
\def\unun{\mathbf 1_k\otimes \mathbf 1}
\def\limN{ \underset{N\rightarrow \infty}{\longrightarrow}    }
\begin{document}
\maketitle 
 
\begin{center}
\begin{minipage}{14cm}
\begin{center}{\sc abstract:}\end{center}
 {\sl 
Let $\mathbf X_N= (X_1^{(N)} \etc X_p^{(N)})$ be a family of $N \times N$ independent, normalized random matrices from the Gaussian Unitary Ensemble. We state sufficient conditions on matrices $\mathbf Y_N  =(Y_1^{(N)} \etc Y_q^{(N)})$, possibly random but independent of $\mathbf X_N$, for which the operator norm of $P(\mathbf X_N,\mathbf Y_N, \mathbf Y_N^*)$ converges almost surely for all polynomials $P$. Limits are described by operator norms of objects from free probability theory. Taking advantage of the choice of the matrices $\mathbf Y_N$ and of the polynomials $P$, we get for a large class of matrices the ''no eigenvalues outside a neighborhood of the limiting spectrum`` phenomena. We give examples of diagonal matrices $\mathbf Y_N $ for which the convergence holds. Convergence of the operator norm is shown to hold for block matrices, even with rectangular Gaussian blocks, a situation including non-white Wishart matrices and some matrices encountered in MIMO systems.}
\end{minipage}
\end{center}

\section{Introduction and statement of result}
\noindent For a Hermitian $N \times N$ matrix $H_N$, let $\mathcal L_{H_N}$ denote its empirical eigenvalue distribution, namely
		$$\mathcal L_{H_N} =\frac 1 N \sum_{i=1}^N \delta_{\lambda_i},$$
where $\delta_\lambda$ is the Dirac mass in $\lambda$ and $\lambda_1 \etc \lambda_N$ are the eigenvalues of $H_N$. The empirical eigenvalue distribution of large dimensional random matrices has been studied with much interest for a long time. One pioneering result is Wigner's theorem \cite{Wig}, from 1958. Let $W_N$ be an $N \times N$ Wigner matrix. Then the theorem states that, under appropriate assumptions, the $n$-th moment of $\mathcal L_{W_N}$ converges in expectation to the $n$-th moment of the semicircular law as $N$ goes to infinity for any integer $n$. This result has been generalized in many directions, notably by Arnold \cite{ARN} for the almost sure convergence of the moments. The convergence of the empirical eigenvalue distribution for covariance matrices was first shown by Mar\u cenko and Pastur \cite{MP} in 1967, and has been generalized in the late 1970's and the early 1980's by many people, including Grenander and Silverstein \cite{GS}, Wachter \cite{WAC}, Jonsson \cite{JON2}, Yin and Krishnaiah \cite{YK}, Bai, Yin and Krishnaiah \cite {BYK} and Yin \cite{YIN}.
\\
\\In 1991, Voiculescu \cite{VOI5} discovered a connection between large random matrices and free probability theory. He showed the so-called asymptotic freeness theorem, which has been generalized for instance in \cite{HP,THO,VOI4}, which implies the almost sure weak convergence of the empirical eigenvalue distribution for Hermitian matrices $H_N$ of the form 
		$$H_N = P(\mathbf X_N, \mathbf Y_N, \mathbf Y_N^*),$$
where
\begin{itemize}
	\item $P$ is a fixed polynomial in $2p+q$ non commutative indeterminates,
	\item $\mathbf X_N  =(X_1^{(N)} \etc X_p^{(N)})$ is a family of independent $N\times N$ matrices of the normalized Gaussian Unitary Ensemble (GUE),
	\item $\mathbf Y_N  =(Y_1^{(N)} \etc Y_q^{(N)})$ are $N\times N$ matrices with appropriate assumptions (see Theorem \ref{VoiTh}  below).
\end{itemize}
The limiting empirical eigenvalue distribution of $H_N$ can be computed by using the notion of freeness. Recall that an $N \times N$ random matrix $X^{(N)}$ is said to be a normalized GUE matrix if it is Hermitian with entries $(X\toN_{n,m})_{1\leq n,m\leq N}$, such that the set of random variables $(  X\toN_{n,n} )_{1\leq n\leq  N},$ and $ (\sqrt 2 $Re $( X\toN_{n,m} ), \sqrt 2 $$ \mathrm{Im } \ ( X\toN_{n,m} ) \ )_{1\leq n<m \leq N }$ forms a centered Gaussian vector with covariance matrix $\frac 1 N \mathbf 1_{N^2}$. Moreover, the result of Voiculescu holds even for independent Wigner or Wishart matrices instead of GUE matrices, as it has been proved by Dykema \cite{DYK}  and Capitaine and Casalis \cite{CC} respectively.
\\
\\Currently, it is known for some random matrices, as for example Wigner and Wishart matrices, that, almost surely, the eigenvalues of the matrix belong to a small neighborhood of the limiting eigenvalue distribution for $N$ large enough. More formally, if $H_N$ is a Hermitian matrix whose empirical eigenvalue distribution converges weakly to a probability measure $\mu$ it is observed in many situations \cite{BY,YBK,BSY,BS,PS} that : for all $ \varepsilon >0$, almost surely there exists $N_0\geq 1$ such that for all $N\geq N_0$ one has
\begin{equation}\label{IntroEq1}
		\sp \big ( \  H_N \ \big) \subset \mathrm{Supp} \ \big ( \  \mu \ \big )  +(-\varepsilon, \varepsilon),
\end{equation}
where '' $\sp $ `` means the spectrum and '' $ \mathrm{Supp}  $ `` means the support.
\\
\\The convergence of the extremal eigenvalues to the edges of the spectrum of a single Wigner or Wishart matrix has been shown in the early 1980's by Geman \cite{GEM}, Juh\'asz \cite{JUH}, F\"uredi and Koml\'os \cite{FK}, Jonsson \cite{JON} and Silverstein \cite{SIL,SIL2}. In 1988, in the case of a real Wigner matrix, Bai and Yin stated in \cite{BY} necessary and sufficient conditions for the convergence in terms of the first four moments of the entries of these matrices. In the case of a Wishart matrix, the similar result is due to Yin, Bai, and Krishnaiah \cite{YBK} and Bai, Silverstein, and Yin \cite{BSY}. The case of a complex matrix has been investigated later by Bai \cite{BAI}. The phenomenon ''no eigenvalues outside (a small neighborhood of) the support of the limiting distribution`` has been shown in 1998 by Bai and Silverstein \cite{BS} for large sample covariance matrices and in 2008 by Paul and Silverstein \cite{PS} for large separable covariance matrices.
\\
\\In 2005, Haagerup and Thorbj\o rnsen \cite{HT} have shown (\ref{IntroEq1}) using operator algebra techniques for matrices $H_N=P(X_1\toN \etc X_p\toN)$, where $P$ is a polynomial in $p$ non commutative indeterminates and $X_1\toN \etc X_p\toN$ are independent, normalized $N \times N$ GUE matrices. This constitutes a real breakthrough in the context of free probability. Their method has been used by Schultz \cite{SCH} to obtain the same result for Gaussian random matrices with real or symplectic entries, and by Capitaine and Donati-Martin \cite{CD} for Wigner matrices with symmetric distribution of the entries satisfying a Poincar\'e inequality and for Wishart matrices.
\\
\\A consequence of the main result of the present article is that the phenomenon (\ref{IntroEq1}) holds in the setting considered by Voiculescu, i.e. for certain Hermitian matrices $H_N$ of the form $H_N = P(\XYY)$.

\begin{Th}[The spectrum of large Hermitian random matrices]\label{MainThBis} Let $\mathbf X_N=( X_1\toN \etc X_p\toN)$ be a family of independent, normalized GUE matrices and $\mathbf Y_N= (Y_1^{(N)}\etc Y_q^{(N)})$ be a family of $N\times N$ matrices, possibly random but independent of $\mathbf X_N$. Assume that for every Hermitian matrix $H_N$ of the form 
		$$H_N = P(\mathbf Y_N, \mathbf Y_N^*),$$
where $P$ is a polynomial in $2q$ non commutative indeterminates, we have with probability one that:
\begin{enumerate}
\item{\bf Convergence of the empirical eigenvalue distribution:} there exists a compactly supported measure $\mu$ on the real line such that the empirical eigenvalue distribution of $H_N$ converges weakly to $\mu$ as $N$ goes to infinity.
\item{\bf Convergence of the spectrum:} for any $\varepsilon>0$, almost surely there exists $N_0$ such that for all $N\geq N_0$,
\begin{equation}\label{MainThCvSp}
		\sp \big ( \  H_N \ \big) \subset \mathrm{Supp} \ \big ( \  \mu \ \big )  +(-\varepsilon, \varepsilon).
\end{equation}
\end{enumerate}
Then almost surely the convergences of the empirical eigenvalue distribution and of the spectrum also hold for all Hermitian matrices $H_N = P(\mathbf X_N, \mathbf Y_N, \mathbf Y_N^*)$, where $P$ is a polynomial in $p+2q$ non commutative indeterminates.
\end{Th}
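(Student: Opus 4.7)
I would follow the operator-algebraic strategy of Haagerup and Thorbj\o rnsen \cite{HT}, adapted to the conditional setting in which $\mathbf{Y}_N$ is treated as deterministic. Since $\mathbf{X}_N$ is independent of $\mathbf{Y}_N$, I would restrict attention to the probability $1$ event $\Omega_0$ on which hypotheses (1) and (2) hold for every Hermitian polynomial in $\mathbf{Y}_N, \mathbf{Y}_N^*$; on $\Omega_0$ each realization of $\mathbf{Y}_N$ may be treated as a deterministic sequence. Applied to Hermitian polynomials of the form $P_0(\mathbf{Y}_N,\mathbf{Y}_N^*)^*P_0(\mathbf{Y}_N,\mathbf{Y}_N^*)$, hypothesis (2) gives $\sup_N \|P_0(\mathbf{Y}_N,\mathbf{Y}_N^*)\|<\infty$ for every polynomial $P_0$; combined with hypothesis (1) this yields convergence of the full $*$-distribution of $\mathbf{Y}_N$ to that of some family $\mathbf{y}$ in a tracial $C^*$-probability space. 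The first conclusion --- weak convergence of $\mathcal{L}_{P(\XYY)}$ --- is then immediate from Voiculescu's asymptotic freeness theorem (Theorem \ref{VoiTh}), since $\mathbf{X}_N$ becomes asymptotically free from any such bounded deterministic family.

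The spectral inclusion (\ref{MainThCvSp}) is the core of the theorem. I would apply the linearization trick: for every self-adjoint polynomial $P$ there exist an integer $k\geq 1$, Hermitian matrices $a_0,\ldots,a_p\in \Mk$, and matrices $b_1,\ldots,b_q\in \Mk$ such that establishing (\ref{MainThCvSp}) reduces to controlling, for each real $\lambda$ outside a neighborhood of the limiting spectrum, the resolvent of the affine pencil
\begin{equation*}
L_N = a_0\UnN + \sum_{i=1}^p a_i\otimes X_i^{(N)} + \sum_{j=1}^q\bigl(b_j\otimes Y_j^{(N)} + b_j^*\otimes Y_j^{(N)*}\bigr),
\end{equation*}
whose free counterpart $\ell = a_0\Un + \sum_i a_i\otimes x_i + \sum_j(b_j\otimes y_j + b_j^*\otimes y_j^*)$ lives in $\MkA$. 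The key analytic step is a \emph{conditional master inequality} of the form
\begin{equation*}
\bigl\| \esp \traceN\bigl[(z-L_N)^{-1}\bigr] - \trace\bigl[(z-\ell)^{-1}\bigr] \bigr\|_{\Mk} \leq \frac{C(\Im z)}{N^2}, \qquad z\in\mathbb{C}\setminus\mathbb{R},
\end{equation*}
where $\esp$ is the expectation over $\mathbf{X}_N$, conditional on $\mathbf{Y}_N$. Following \cite{HT}, this would be derived by Gaussian integration by parts in the GUE entries combined with the operator-valued Schwinger--Dyson equation for $\ell$. The presence of $\mathbf{Y}_N$ is harmless: the Gaussian calculus touches only $\mathbf{X}_N$, and what one needs from $\mathbf{Y}_N$ is uniform operator-norm control of polynomials, exactly what hypothesis (2) supplies on $\Omega_0$.

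From the master inequality the rest is now routine. Gaussian concentration for GUE upgrades the $1/N^2$ estimate on the conditional mean to an almost sure $o(1)$ estimate on $\traceN[(z-L_N)^{-1}]$, and a Borel--Cantelli argument on a fine grid of $\lambda$ outside $\sp(\ell)$ shows that $\sp(L_N)\subset \sp(\ell)+(-\varepsilon,\varepsilon)$ eventually almost surely; reversing the linearization yields (\ref{MainThCvSp}). Hypothesis (2) applied to Hermitian polynomials in $\mathbf{Y}_N$ alone identifies $\mathrm{Supp}(\mu)$ with the spectrum of the corresponding free limit, closing the loop. The main obstacle is the conditional master inequality itself: all constants produced by the Schwinger--Dyson analysis must depend on $\mathbf{Y}_N$ only through operator norms of polynomials, so that the bound survives under the purely qualitative hypotheses (1)--(2) rather than any quantitative rate. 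Arranging this clean dependence is precisely what dictates the stated form of the assumptions.
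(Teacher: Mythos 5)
Your overall architecture (reduction to deterministic $\mathbf Y_N$, lower bound via weak convergence, linearization, matrix-valued Stieltjes transforms, Schwinger--Dyson, concentration plus Borel--Cantelli) matches the paper's, but there is a genuine gap at the step you yourself flag as "the main obstacle": the conditional master inequality
$\| \esp \traceN[(z-L_N)^{-1}] - \trace[(z-\ell)^{-1}] \| \leq C(\Im z)/N^2$
with $\ell = L(\mathbf x,\mathbf y)$ \emph{cannot} hold under the purely qualitative hypotheses (1)--(2). The difference you want to bound contains, besides the Gaussian fluctuation term (which integration by parts does control at order $N^{-2}$), a term measuring the distance between the $\Mk$-valued Stieltjes transform of $T_N=\sum_j b_j\otimes Y_j^{(N)}$ and that of its limit $t=\sum_j b_j\otimes y_j$. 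Hypotheses (1)--(2) give no rate whatsoever for this convergence: already for $p$ arbitrary and $q=1$ with $Y_1^{(N)}$ diagonal, $\tau_N[(z-Y_1^{(N)})^{-1}]-\tau[(z-y_1)^{-1}]$ can tend to zero arbitrarily slowly, and this error propagates untouched into your master inequality. A mere $o(1)$ bound is not enough here, because the Haagerup--Thorbj{\o}rnsen mechanism needs an $O(N^{-2}(\Im z)^{-\alpha})$ estimate valid down to $\Im z \sim N^{-\gamma}$ to conclude that the expected spectral mass outside the limit support is $O(N^{-2})$, which is what Borel--Cantelli then exploits. The author notes that an earlier version of the paper took exactly your route and consequently had to \emph{add} a uniform rate of convergence of $G_{T_N}$ to $G_t$ as a hypothesis.

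The paper's fix, borrowed in spirit from Bai--Silverstein, is to split the comparison in two. First, the Schwinger--Dyson/subordination analysis is carried out between $L_N=L(\mathbf X_N,\mathbf Y_N)$ and the \emph{intermediate} object $\ell_N=L(\mathbf x,\mathbf Y_N)$, where only the GUE matrices have been sent to their limit and $\mathbf Y_N$ is kept at finite $N$; both resolvents are then governed by the same finite-$N$ transform $G_{T_N}$, the error term is purely Gaussian, and the stability of the fixed-point equation (Proposition \ref{UniqSol}) gives the clean $O(N^{-2}(\Im\lambda)^{-\alpha})$ bound of Estimate (\ref{EqPurp}). Second, the passage from $\sp(L(\mathbf x,\mathbf Y_N))$ to $\sp(L(\mathbf x,\mathbf y))$ is handled by a separate, entirely qualitative $\mathcal C^*$-algebraic argument: Shlyakhtenko's Theorem \ref{ShlTh} on norm convergence in free products with creation operators, combined with an ultraproduct argument to pass to $\Mk$-valued polynomials (Proposition \ref{PropBloMa}). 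You would need to incorporate this two-step decomposition, or supply some substitute for the missing rate, for your proof to go through.
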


\noindent Theorem \ref{MainThBis} is a straightforward consequence of Theorem \ref{MainTh} below, where the language of free probability is used. Moreover, Theorem \ref{MainTh} specifies Theorem \ref{MainThBis} by giving a description of the limit of the empirical eigenvalue distribution. For readers convenience, we recall some definitions (see \cite{NS} and \cite{AGZ} for details).

\begin{Def} \begin{enumerate}
\item A $^*$-probability space $(\mathcal A, .^*,\tau)$ consists of a unital $\mathbb C$-algebra $\mathcal A$ endowed with an antilinear involution $.^*$ such that $(ab)^*=b^*a^*$ for all $a,b$ in $\mathcal A$, and a state $\tau$. A state $\tau$ is a linear functional $\tau: \mathcal A \mapsto \mathbb C$ satisfying
\begin{equation}
		\tau[\mathbf 1] =1, \ \ \tau[a^* a]\geq 0 \ \forall a \in \mathcal A.
\end{equation}
The elements of $\mathcal A$ are called non commutative random variables. We will always assume that $\tau$ is a trace, i.e. that it satisfies $\tau[ab]=\tau[ba]$ for every $a,b \in \mathcal A$. The trace $\tau$ is said to be faithful when it satisfies $\tau[a^*a]=0$ only if $a=0$.

\item The non commutative law of a family $\mathbf { a}=(a_1 \etc a_p)$ of non commutative random variables is defined as the linear functional $ P  \mapsto \tau \big[ P(\mathbf a,\mathbf a^*) \ \big] $, defined on the set of polynomials in $2p$ non commutative indeterminates. The convergence in law is the pointwise convergence relative to this functional.

\item \label{DefFreeness} The families of non commutative random variables $\mathbf a_1 \etc \mathbf a_n$ are said to be free if for all $K$ in $\mathbb N$, for all non commutative polynomials $P_1 \etc P_K$
\begin{equation}\label{FreeProp}
		\tau \Big [ P_1(\mathbf a_{i_1}, \mathbf a_{i_1}^*)  \ldots  P_K(\mathbf a_{i_K},\mathbf a_{i_K}^*) \ \Big] = 0
\end{equation}
as soon as $i_1\neq i_2Ê\neq \ldots \neq i_K$ and $\tau\big[ P_k(\mathbf a_{i_k}, \mathbf a_{i_k}^*) \ \big]=0$ for $k=1, \ldots ,  K$.

\item \label{DefSemiC} A family of non commutative random variables $\mathbf x=(x_1 \etc x_p)$ is called a free semicircular system when the non commutative random variables are free, selfadjoint ($x_i=x_i^*$, $i=1 \etc p$), and for all $k$ in $\mathbb N$ and $i=1 , \ldots , p$, one has
\begin{equation}\label{SemiCTrace}
		\tau[ x_i^k] =  \int t^k d\sigma(t),
\end{equation}
with $d\sigma(t) = \frac 1 {2\pi} \sqrt{4-t^2} \ \mathbf 1_{|t|\leq2} \ dt$ the semicircle distribution.
\end{enumerate}
\end{Def}

\noindent Recall first the statement of Voiculescu's asymptotic freeness theorem.

\begin{Th}[ \cite{HP,THO,VOI3,VOI4} The asymptotic freeness of $X_1\toN \etc X_p\toN, \mathbf Y_N$] \label{VoiTh} Let $\mathbf X_N=( X_1\toN \etc X_p\toN)$ be a family of independent, normalized GUE matrices and $\mathbf Y_N= (Y_1^{(N)}\etc Y_q^{(N)})$ be a family of $N\times N$ matrices, possibly random but independent of $\mathbf X_N$. Let $\mathbf x=(x_1 \etc x_p)$ be a free semicircular system in a $^*$-probability space $(\mathcal A, .^*, \tau)$ and $\mathbf y=(y_1\etc y_q)$ in $\mathcal A^q$ be a family of non commutative random variables free from $\mathbf x$. Assume the following.
\begin{enumerate}
\item{\bf Convergence of $\mathbf Y_N$:} Almost surely, the non commutative law of $\mathbf Y_N$ in $(\MN, .^*,\tau_N)$ converges to the non commutative law of $\mathbf y$, which means that for all polynomial $P$ in $2q$ non commutative indeterminates, one has
\begin{equation}
		\tau_N \big [ P(\mathbf Y_N,  \mathbf Y_N^*) \big ]     \underset{N\rightarrow \infty}{\longrightarrow}    \tau  \big [ P( \mathbf y,\mathbf y^*) \big ],
\end{equation}
where $\tau_N$ denotes the normalized trace of $N \times N$ matrices.
\item{\bf Boundedness of the spectrum:} Almost surely, for $j=1\etc q$ one has
\begin{equation}
		\underset{N \rightarrow \infty} \limsup \ \| Y_j\toN \|<\infty,
\end{equation}
where $\| \cdot \|$ denotes the operator norm.
\end{enumerate}
Then the non commutative law of $(\mathbf X_N,\mathbf Y_N)$ in $(\MN, .^*,\tau_N)$ converges to the non commutative law of $(\mathbf x,\mathbf y)$, i.e. for all polynomial $P$ in $p+2q$ non commutative indeterminates, one has
\begin{equation}\label{EqAsF}
		\tau_N \big [ P(\mathbf X_N, \mathbf Y_N,  \mathbf Y_N^*) \big ]     \underset{N\rightarrow \infty}{\longrightarrow}    \tau  \big [ P(\mathbf x, \mathbf y,\mathbf y^*) \big ].
\end{equation}

\end{Th}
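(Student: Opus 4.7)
The plan is to establish (\ref{EqAsF}) by the classical combinatorial moment method --- Wick/Isserlis combined with a genus expansion --- and then to upgrade convergence in expectation to almost sure convergence via a Gaussian concentration estimate. By linearity it suffices to treat a single monomial
$W_N = M_0\, X_{i_1}\toN M_1 \cdots X_{i_n}\toN M_n$,
where each $M_\ell$ is a fixed monomial in $\mathbf Y_N,\mathbf Y_N^*$.

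First I would condition on $\mathbf Y_N$. Since $\mathbf X_N$ is independent of $\mathbf Y_N$, the analysis can be restricted to the probability-one event on which $\tau_N[M_\ell(\mathbf Y_N,\mathbf Y_N^*)] \to \tau[M_\ell(\mathbf y,\mathbf y^*)]$ and $\sup_N \|M_\ell(\mathbf Y_N,\mathbf Y_N^*)\| < \infty$ for every finite list of fixed $\ell$; on this event $\mathbf Y_N$ may be treated as deterministic. Writing each $X_i\toN$ in terms of its complex Gaussian entries and applying the Wick formula yields
\[
  \esp\,\tau_N[W_N] \;=\; \sum_{\pi \in \mathcal{P}_2(n)} \frac{1}{N^{1+n/2}} \sum_{\text{indices}} \prod_{\ell=0}^n (M_\ell)_{\cdots},
\]
where $\pi$ runs over pair partitions of $\{1,\ldots,n\}$ matching only positions with identical $X$-indices. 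Each $\pi$ defines a ribbon diagram on the disk: the inner index sum factors as $N^{F(\pi)}$ times a product of normalized traces of the contiguous $\mathbf Y_N$-subwords carved out by the chords of $\pi$, and Euler's formula gives $F(\pi)-1-n/2 \le 0$, with equality if and only if $\pi$ is non-crossing.

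Letting $N\to \infty$, only non-crossing pair partitions survive, and each contributes the product of limiting traces of its nested $\mathbf Y_N$-blocks, evaluated on $\mathbf y$ via the conditioning step; the uniform operator-norm bound on the $M_\ell(\mathbf Y_N,\mathbf Y_N^*)$ justifies passing to the limit inside each trace. By the characterization of a free semicircular system (free cumulants supported on non-crossing pair partitions) together with the freeness of $\mathbf x$ from $\mathbf y$, one recognizes this expression as precisely the moment--cumulant expansion of $\tau[P(\mathbf x,\mathbf y,\mathbf y^*)]$, which gives convergence in expectation. To promote this to almost sure convergence I would apply the Gaussian Poincar\'e inequality to the $N^2$ real Gaussian entries of $\mathbf X_N$: the map $\mathbf X_N \mapsto \tau_N[W_N]$ has Lipschitz constant $O(1/N)$ uniformly in bounded $\mathbf Y_N$, hence $\var(\tau_N[W_N]) = O(1/N^2)$, and Borel--Cantelli finishes the job.

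The main obstacle is the bookkeeping in the genus step: one must verify carefully that the non-crossing pairings are exactly those which factor the matrix-index sum into the product of surviving $\mathbf Y_N$-traces, and that every crossing pairing pays at least a factor $1/N^2$. Once this combinatorial identification is in hand, matching the limit with the free probability side is immediate from the planar/non-crossing description of the semicircular free cumulants, and the almost sure upgrade is routine Gaussian analysis.
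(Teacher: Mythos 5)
This theorem is not proved in the paper: it is recalled as a known result and attributed to the references \cite{HP,THO,VOI3,VOI4}, so there is no internal proof to compare against. Your sketch reconstructs the standard combinatorial proof of asymptotic freeness for GUE matrices coupled with matrices having a limiting distribution (essentially \cite[Theorem 5.4.5]{AGZ}), and it is sound: the Fubini-type conditioning on $\mathbf Y_N$ is legitimate (the paper itself uses the same device in Section \ref{PartSheme} to reduce to deterministic $\mathbf Y_N$); the Wick/genus expansion does organize $\esp\,\tau_N[W_N]$ as $\sum_\pi N^{\#(\gamma\pi)-1-n/2}\prod_c \tau_N[\cdots]$ with the product running over the cycles of $\gamma\pi$ (the Kreweras complement blocks when $\pi$ is non-crossing), the parity of $\#(\gamma\pi)$ forcing a gap of at least $N^{-2}$ for crossing pairings; the operator-norm hypothesis is exactly what bounds the subleading normalized traces uniformly; and the surviving sum is the moment--cumulant expansion of $\tau[P(\mathbf x,\mathbf y,\mathbf y^*)]$ for a semicircular family free from $\mathbf y$.

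Two small points deserve tightening. First, the map $\mathbf X_N\mapsto \tau_N[W_N]$ is a polynomial of degree $n\geq 2$ in the Gaussian entries and is therefore not globally Lipschitz; the Poincar\'e route works, but via the bound $\var(f)\leq \frac1N\esp[\|\nabla f\|^2]$ with $\esp[\|\nabla f\|^2]=O(N^{-1})$, which uses uniform boundedness of the moments of $\|X_j\toN\|$ rather than a deterministic Lipschitz constant (alternatively, a second Wick computation gives $\var(\tau_N[W_N])=O(N^{-2})$ directly). Second, the hypothesis only gives $\limsup_N\|Y_j\toN\|<\infty$ almost surely, i.e.\ an $\omega$-dependent bound valid for $N$ large; this is harmless after conditioning but should be stated, since the uniform control of the crossing terms is then also only eventual and $\omega$-dependent. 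With these adjustments the argument is complete and matches the cited literature.
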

\noindent In \cite{HT} Haagerup and Thorbj\o rnsen strengthened the connection between random matrices and free probability. Limits of random matrices have now to be seen in more elaborated structure, called $\mathcal C^*$-probability space, which is endowed with a norm.

\begin{Def} A $\mathcal C^*$-probability space $(\mathcal A, .^*, \tau, \|\cdot\|)$ consists of a $^*$-probability space $(\mathcal A, .^*, \tau)$ and a norm $\|\cdot \|$ such that $(\mathcal A,.^*, \|\cdot\|)$ is a $\mathcal C^*$-algebra.
\end{Def}
\noindent By the Gelfand-Naimark-Segal construction, one can always realize $\mathcal A$ as a norm-closed $\mathcal C^*$-subalgebra of the algebra of bounded operators on a Hilbert space. Hence we can use functional calculus on $\mathcal A$. Moreover, if $\tau$ is a faithful trace, then the norm $\|\cdot \|$ is uniquely determined by the following formula (see \cite[Proposition 3.17]{NS}):
\begin{equation}\label{DefNorm}
		\| a \|= \underset{k\rightarrow \infty} {\textrm{lim} }\Big (\ \tau \big[ \ (a^*a)^k \ \big] \ \Big)^{\frac 1 {2k}}, \forall a \in \mathcal A.
\end{equation}

\noindent The main result of \cite{HT} is the following.

\begin{Th}[ \cite{HT} The strong asymptotic freeness of independent GUE matrices] \label{HTTh}Let $X_1\toN \etc X_p\toN$ be independent, normalized $N \times N$ GUE matrices and let $x_1 \etc x_p$ be a free semicircular system in a $\mathcal C^*$-probability space $(\mathcal A, .^*, \tau, \| \cdotÊ\|)$ with a faithful trace. Then almost surely, one has: for all polynomials $P$ in $p$ non commutative indeterminates, one has
\begin{equation}
	\big \| P( X_1\toN \etc X_p\toN ) \big  \|  \underset{N\rightarrow \infty}{\longrightarrow}  \|P(x_1 \etc x_p)  \|.
\end{equation}
\end{Th}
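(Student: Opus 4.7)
The strategy is to prove the two inequalities $\liminf_N \|P(\mathbf{X}_N)\| \geq \|P(\mathbf{x})\|$ and $\limsup_N \|P(\mathbf{X}_N)\| \leq \|P(\mathbf{x})\|$ separately. The lower bound is immediate from Theorem~\ref{VoiTh}: applying it with $(P^*P)^m$ gives almost surely $\tau_N[(P(\mathbf{X}_N)^* P(\mathbf{X}_N))^m] \to \tau[(P(\mathbf{x})^* P(\mathbf{x}))^m]$ for every fixed $m$, and since $\|P(\mathbf{X}_N)\| \geq \tau_N[((P^*P)^m)(\mathbf{X}_N)]^{1/2m}$, taking $N\to\infty$ then $m\to\infty$ and invoking (\ref{DefNorm}) on the limit side yields the lower bound. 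The content is the upper bound, which is equivalent to showing that for every $\varepsilon>0$, the spectrum of $P(\mathbf{X}_N)^* P(\mathbf{X}_N)$ sits in $[0, \|P(\mathbf{x})\|^2 + \varepsilon]$ almost surely for $N$ large.

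Because $P$ need not be self-adjoint, the first step is a linearization: one constructs Hermitian matrices $a_0, \ldots, a_p \in \Mk$ (with $k$ depending on $P$ and on a scalar $\lambda$) so that the invertibility of
\begin{equation*}
L_N = a_0 \UnN + \sum_{i=1}^p a_i \otimes X_i\toN \in \MkMn
\end{equation*}
is equivalent to that of $P(\mathbf{X}_N) - \lambda$, with a corresponding $L = a_0 \Un + \sum_{i=1}^p a_i \otimes x_i \in \MkA$ on the limit side. The upper bound then reduces to showing $\mathrm{Sp}(L_N) \subset \mathrm{Sp}(L) + (-\varepsilon,\varepsilon)$ almost surely eventually, for every $\varepsilon>0$.

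The heart of the matter is a master inequality: for every $\phi \in C^\infty_c(\mathbb{R})$ vanishing on a neighborhood of $\mathrm{Sp}(L)$,
\begin{equation*}
\esp\big[\, \ttrN(\phi(L_N)) \,\big] = O(N^{-2}).
\end{equation*}
The route is to study the matrix-valued resolvent $G_N(z) = \esp[\traceN((z - L_N)^{-1})] \in \Mk$, derive a Schwinger--Dyson type fixed-point equation for it by applying Gaussian integration by parts to the entries of $\mathbf{X}_N$, and compare with the exact fixed-point equation for $G(z) = \trace((z-L)^{-1})$. Inverting the linearization of the fixed-point map around $G(z)$ on a neighborhood of $\mathbb{R} \setminus \mathrm{Sp}(L)$ yields $\|G_N(z) - G(z)\| = O(N^{-2})$ in $\Mk$-norm, with constants polynomial in $|\mathrm{Im}\,z|^{-1}$; integrating against the Helffer--Sj\"ostrand almost-analytic extension of $\phi$ then produces the master inequality.

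To upgrade from expectation to an almost sure statement, the Gaussian Poincar\'e inequality applied to the Lipschitz map $\mathbf{X}_N \mapsto \ttrN(\phi(L_N))$ gives a variance bound of order $N^{-2}$; combined with the master inequality, Chebyshev and Borel--Cantelli yield $\ttrN(\phi(L_N)) \to 0$ almost surely. Choosing $\phi \geq 0$ that equals $1$ on a fixed compact set disjoint from an $\varepsilon$-neighborhood of $\mathrm{Sp}(L)$ shows that the normalized count of eigenvalues of $L_N$ outside that neighborhood, which is a multiple of $(kN)^{-1}$, must eventually vanish; the linearization then translates this into the upper bound for $\|P(\mathbf{X}_N)\|$. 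The main obstacle is obtaining the $O(N^{-2})$ rate in the master inequality: an $O(N^{-1})$ bound is not summable and would yield only convergence in probability. Reaching $N^{-2}$ relies on the vanishing of the leading $N^{-1}$ correction to $G_N$, itself a consequence of the vanishing of odd GUE moments in the genus expansion; organizing this cancellation in the matrix-valued setting is where the argument demands the most care.
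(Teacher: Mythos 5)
Your outline follows exactly the Haagerup--Thorbj\o rnsen strategy that the paper itself adopts and extends in Sections 3--8 (lower bound from asymptotic freeness plus faithfulness of $\tau$, linearization to a selfadjoint degree-one $L$ with coefficients in $\Mk$, an $O(N^{-2})$ master inequality via the matrix-valued Schwinger--Dyson equation and the stability of its fixed point, then concentration). All of that is sound, and your remark that the $O(N^{-2})$ rate in the mean is the crux is correct. The step that fails as written is the very last one. A variance bound of order $N^{-2}$ for $D_N(\phi) := \ttrN\big(\phi(L_N)\big)$ gives, via Chebyshev at scale $t$, $\mathbb P\big(|D_N(\phi)-\esp D_N(\phi)|>t\big) = O(N^{-2}t^{-2})$; summability forces $t \gg N^{-1/2}$, so Borel--Cantelli only yields $D_N(\phi) = O(N^{-1/2+\epsilon})$ almost surely. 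That is not enough to conclude that the eigenvalue count vanishes: a single eigenvalue of $L_N$ in the region where $\phi = 1$ contributes exactly $(kN)^{-1} \ll N^{-1/2}$ to $D_N(\phi)$, so ``$D_N(\phi)\to 0$ a.s.'' is perfectly consistent with stray eigenvalues persisting for all $N$. What you need is $N^{1+\kappa} D_N(\phi) \to 0$ almost surely for some $\kappa>0$.

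The fix is to improve the fluctuation bound, and there are two standard routes. (i) Run the Poincar\'e inequality more carefully: by Cauchy--Schwarz the squared gradient of $\mathbf X_N \mapsto D_N(\phi)$ is bounded by $\frac{C}{kN}\,\ttrN\big(\phi'(L_N)^2\big) = \frac{C}{kN} D_N(\phi'^2)$, and since $\phi'^2$ also vanishes on a neighborhood of $\sp(L)$ the master inequality applies to it and gives $\esp[D_N(\phi'^2)] = O(N^{-2})$; hence $\var\big(D_N(\phi)\big) = O(N^{-4})$, and Chebyshev at scale $\delta N^{-1-\kappa}$ becomes summable for $\kappa<1/2$. (ii) The paper's route (Lemma \ref{FromStoS}, following \cite{GKZ}): truncate to the event $\{D_N(\phi'^2)\leq N^{-4\kappa}\}$, whose complement has probability $O(N^{-2+4\kappa})$ by the master inequality and Chebyshev, note that on this event the Lipschitz constant of $A\mapsto (\tau_k\otimes\tau_N)[\phi(A)]$ drops to $O(N^{-1/2-2\kappa})$, and apply exponential concentration to a Lipschitz extension of the truncated functional. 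Either way, the essential missing ingredient is the recursion in which the master inequality applied to $\phi'^2$ feeds the concentration estimate for $\phi$; without it the Chebyshev--Borel--Cantelli step does not reach the scale $1/N$ at which individual eigenvalues live.
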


\noindent This article is mainly devoted to the following theorem which is a generalization of Theorem \ref{HTTh} in the setting of Theorem \ref{VoiTh}.

\begin{Th}[The strong asymptotic freeness of $X_1\toN \etc X_p\toN,\mathbf Y_N$]\label{MainTh} Let $\mathbf X_N=( X_1\toN \etc X_p\toN)$ be a family of independent, normalized GUE matrices and $\mathbf Y_N= (Y_1^{(N)}\etc Y_q^{(N)})$ be a family of $N\times N$ matrices, possibly random but independent of $\mathbf X_N$. Let $\mathbf x=(x_1 \etc x_p)$ and $\mathbf y=(y_1\etc y_q)$ be a family of non commutative random variables in a $\mathcal C^*$-probability space $(\mathcal A, .^*, \tau, \| \cdot \|)$ with a faithful trace, such that $\mathbf x$ is a free semicircular system free from $\mathbf y$. Assume the following.
\\{\bf Strong convergence of $\mathbf Y_N$:} Almost surely, for all polynomials $P$ in $2q$ non commutative indeterminates, one has
\begin{eqnarray}
		\tau_N\big [ P(\mathbf Y_N, \mathbf Y_N^* )  \big ] & \underset{N\rightarrow \infty}{\longrightarrow} & \tau [ P(\mathbf y, \mathbf y^*)  ],\label{Th1wCV} \\
		\big \| P(\mathbf Y_N, \mathbf Y_N^*)   \big \| & \underset{N\rightarrow \infty}{\longrightarrow} & \|P(\mathbf y, \mathbf y^*)  \|.\label{Th1sCV}
\end{eqnarray}
Then, almost surely, for all polynomials $P$ in $p+2q$ non commutative indeterminates, one has
\begin{eqnarray}
		\tau_N\big [ P( \mathbf X_N,  \mathbf Y_N, \mathbf Y_N^* )  \big ] & \underset{N\rightarrow \infty}{\longrightarrow} & \tau [ P(\mathbf x,\mathbf y, \mathbf y^*)  ],\label{MainThEqVoic} \\
		\big  \| P( \mathbf X_N,  \mathbf Y_N,  \mathbf Y_N^*)  \big \| & \underset{N\rightarrow \infty}{\longrightarrow} & \| P( \mathbf x, \mathbf y,\mathbf y^*) \| \label{MainThEq}.
\end{eqnarray}
\end{Th}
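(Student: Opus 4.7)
The convergence of mixed traces (\ref{MainThEqVoic}) is immediate from Voiculescu's asymptotic freeness theorem (Theorem \ref{VoiTh}), since the strong convergence hypothesis on $\mathbf Y_N$ provides both the weak convergence and the uniform spectral boundedness it needs. So all the effort is directed at the norm statement (\ref{MainThEq}). The plan is to adapt the Haagerup--Thorbj\o rnsen strategy behind Theorem \ref{HTTh}, treating $\mathbf Y_N$ as an operator-valued coefficient rather than as one of the ``variables''. I would work on the probability-one event on which (\ref{Th1wCV}) and (\ref{Th1sCV}) hold, so $\mathbf Y_N$ can be taken deterministic and only $\mathbf X_N$ remains random. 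The first step is the Haagerup--Thorbj\o rnsen linearization trick (in the form refined by Anderson): it reduces the problem to showing, for each integer $k\geq 1$ and each choice of selfadjoint $a_0\etc a_p$ and arbitrary $b_1\etc b_q$ in $\Mk$, that the spectrum of the selfadjoint linear pencil
\[
S_N = a_0\UnN + \sum_{i=1}^p a_i\otimes X_i^{(N)} + \sum_{j=1}^q \bigl(b_j\otimes Y_j^{(N)} + b_j^*\otimes Y_j^{(N)*}\bigr) \in \MkMn
\]
eventually lies in any $\varepsilon$-neighborhood of the spectrum of its free counterpart $S = a_0\Un + \sum a_i\otimes x_i + \sum (b_j\otimes y_j + b_j^*\otimes y_j^*)$.

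The heart of the argument is a master inequality for the $\Mk$-valued Stieltjes transforms
\[
G_N(z)=\traceN\bigl[(z\unun - S_N)^{-1}\bigr], \qquad G(z) = \trace\bigl[(z\unun - S)^{-1}\bigr].
\]
Gaussian integration by parts on $\mathbf X_N$ (with $\mathbf Y_N$ frozen) produces a Schwinger--Dyson type identity for $\esp G_N(z)$; on the free side $G(z)$ satisfies the analogous operator-valued fixed-point equation. Comparing the two and bootstrapping should yield an estimate of the form
\[
\bigl\|\esp G_N(z) - G(z) - N^{-2}\Lambda_N(z)\bigr\| \leq \frac{C(z)}{N^{4}}
\]
on a suitable complex domain, with $\Lambda_N(z)$ an explicit matrix whose norm is uniformly bounded in $N$.

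Given such an inequality, the remaining analysis follows the HT blueprint: feeding it through the Helffer--Sj\"ostrand formula applied to a smooth cutoff supported outside $\mathrm{Sp}(S)+(-\varepsilon,\varepsilon)$ bounds in expectation the number of eigenvalues of $S_N$ in that excluded region by a polynomially small quantity, which forces its vanishing for $N$ large via Markov's inequality. Gaussian concentration for polynomials of the GUE combined with Borel--Cantelli then upgrades the statement to almost sure. Together with the weak convergence (\ref{MainThEqVoic}), which provides the reverse spectral inclusion up to $\varepsilon$, this yields $\|P(\XYY)\|\to \|P(\xyy)\|$ for every polynomial $P$.

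The main obstacle is establishing the master inequality in this enlarged setting. In the original HT argument the linear pencil has coefficients in the fixed finite-dimensional algebra $\Mk$; here the terms $b_j\otimes Y_j^{(N)}$ effectively promote them to the $N$-dependent algebra $\MkMn$. Hypothesis (\ref{Th1sCV}) is what keeps the various tensor components of the resolvent bounded uniformly in $N$, while (\ref{Th1wCV}) lets one replace the finite-$N$ fixed-point equation by its free-probabilistic limit involving $\mathbf y$. The Schwinger--Dyson analysis must be redone in this operator-valued framework, and one must verify that the combinatorial cancellations responsible for the $1/N^2$ gain survive the insertion of $\mathbf Y_N$ as coefficients; this is the real technical content of the paper.
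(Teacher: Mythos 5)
Your overall architecture is right as far as it goes: the trace convergence (\ref{MainThEqVoic}) is indeed just Voiculescu's theorem, the lower bound $\liminf\|P(\mathbf X_N,\mathbf Y_N,\mathbf Y_N^*)\|\geq\|P(\mathbf x,\mathbf y,\mathbf y^*)\|$ does come from weak convergence of the empirical measure plus faithfulness of $\tau$, the reduction to deterministic Hermitian $\mathbf Y_N$ and to selfadjoint degree-one pencils with $\Mk$-coefficients is exactly the paper's Steps 0--1, and the Schwinger--Dyson/Gaussian integration-by-parts machinery is the engine of the estimate. But there is a genuine gap at your ``master inequality'': you propose to compare $\esp\, G_N(z)$ directly with the Stieltjes transform $G(z)$ of the \emph{free limit} $S=a_0\Un+\sum a_i\otimes x_i+\sum(b_j\otimes y_j+b_j^*\otimes y_j^*)$. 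The hypotheses (\ref{Th1wCV})--(\ref{Th1sCV}) are purely qualitative: they give no rate whatsoever for the convergence of the law of $\mathbf Y_N$ to that of $\mathbf y$, so the discrepancy between the finite-$N$ subordination equation (whose data is $G_{T_N}$, built from the actual matrices $Y_j\toN$) and the limiting one (whose data is the Stieltjes transform of $t=\sum b_j\otimes y_j$) cannot be bounded by $C(z)/N^{2}$, let alone $C(z)/N^{4}$. The author explicitly notes that this direct route only yields the theorem under an \emph{additional} assumption of a uniform rate of convergence of $G_{T_N}$.

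The paper's resolution, which your proposal is missing, is the Bai--Silverstein-style intermediate object $\ell_N=L(\mathbf x,\mathbf Y_N)$: free semicirculars coupled with the \emph{same} finite-$N$ matrices $\mathbf Y_N$, realized in one $\mathcal C^*$-probability space with $\mathbf x$ free from each $\mathbf Y_N$. Then $G_{S_N+T_N}$ and $G_{s+T_N}$ satisfy the same operator-valued fixed-point equation with the same $G_{T_N}$ (exactly for $s+T_N$ by the amalgamated $\mathcal R$-transform, up to an explicit $O(N^{-2}\|(\mathrm{Im}\,\Lambda)^{-1}\|^{5})$ error for $S_N+T_N$ by the Schwinger--Dyson argument plus the Poincar\'e inequality), and stability of that fixed point yields $|g_{L_N}(\lambda)-g_{\ell_N}(\lambda)|\leq cN^{-2}(\mathrm{Im}\,\lambda)^{-\alpha}$ with no reference to the speed at which $\mathbf Y_N\to\mathbf y$. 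The remaining passage from $\sp(L(\mathbf x,\mathbf Y_N))$ to $\sp(L(\mathbf x,\mathbf y))$ is then a separate, purely $\mathcal C^*$-algebraic statement (Shlyakhtenko's ultraproduct theorem in Appendix A together with Proposition \ref{PropBloMa}), which needs only the qualitative strong convergence of $\mathbf Y_N$. Without this two-stage comparison --- or some substitute for the missing rate --- your plan stalls at the master inequality. (A minor further point: the paper only needs an $O(N^{-2})$ bound on the difference of scalar Stieltjes transforms, fed into the concentration argument of Guionnet et al., not the refined $N^{-2}$-expansion-with-$N^{-4}$-remainder of the original Haagerup--Thorbj\o rnsen paper.)
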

\noindent The convergence of the normalized traces stated in (\ref{MainThEqVoic}) is the content of Voiculescu's asymptotic freeness theorem and is recalled in order to give a coherent and complete statement. Theorem \ref{MainThBis} is easily deduced from Theorem \ref{MainTh} by applying Hamburger's theorem \cite{HAM} for the convergence of the measure and functional calculus for the convergence of the spectrum.
\\
\\{\bf Organization of the paper:} In Section \ref{Applications} we give applications of Theorem \ref{MainTh} which are proved in Section \ref{PartEx}. Sections \ref{PartSheme} to \ref{App2} are dedicated to the proof of Theorem \ref{MainTh}.  
\\
\\{\bf Acknowledgments:} The author would like to thank Alice Guionnet for dedicating much time for many discussions to the subjects of this paper and, along with Manjunath Krishnapur and Ofer Zeitouni, for the communication of Lemma \ref{FromStoS}. He is very much obliged to Dimitri Shlyakhtenko for his contribution to this paper. He would like to thank Benoit Collins for pointing out an error in a previous version of Corollary \ref{CorDetDiag} and giving the idea to fix it. He also likes to thank Mikael de la Salle for useful discussions.

\section{Applications}\label{Applications}
\subsection{Diagonal matrices}
\noindent The first and the simpler matrix model that may be investigated to play the role of matrices $\mathbf Y_N$ in Theorem \ref{MainTh} consists of deterministic diagonal matrices with real entries and prescribed asymptotic spectral measure.

\begin{Cor}[diagonal matrices] \label{CorDetDiag} Let $\mathbf X_N=(X_1\toN \etc X_p\toN)$ be a family of independent, normalized GUE matrices and let $\mathbf D_N=(D_1^{(N)} \etc D_q^{(N)})$ be $N\times N$ deterministic real diagonal matrices, such that for any $j=1 \etc q$,
\begin{enumerate}
\item the empirical spectral distribution of $D^{(N)}_j$ converges weakly to a compactly supported probability measure $\mu_j$,
\item the diagonal entries of $D_j\toN$ are non decreasing:
	$$D_j\toN=\ \mathrm{diag} \ \Big (\lambda_1\toN(j) \etc \lambda_N\toN(j) \ \Big),\textrm{ with } \lambda_1\toN(j) \leq \hdots \leq \lambda_N\toN(j),$$
\item for all $\varepsilon>0$, there exists $N_0$ such that for all $N\geq N_0$, for all $j=1\dots q$,
	$$ \sp \big ( \  D_j\toN \ \big) \subset  \  \mathrm{Supp}  \big ( \  \mu_j  \ \big )+ (-\varepsilon, \varepsilon).$$
\end{enumerate}
Let $v=(v_1\etc v_q)$ in $[0,1]^q$. We set $\mathbf D_N^v=\big( D_1^{(N)}(v_1) \etc D_q^{(N)}(v_q) \big)$, where for any $j=1\etc q$, one has
	$$D_j\toN(v_j) = \ \mathrm{diag} \ \Big (\lambda_{1+\lfloor v_jN \rfloor }\toN(j) \etc \lambda_{N+\lfloor v_jN \rfloor}\toN(j) \ \Big),  \textrm{ with indices modulo } N.$$
Let $\mathbf x=(x_1\etc x_p)$ and $\mathbf d^v=\big( d_1(v) \etc d_q(v) \big)$ be non commutative random variables in a $\mathcal C^*$-probability space $(\mathcal A, .^*,\tau, \|\cdot\|)$ with a faithful trace, such that
\begin{enumerate}
\item $\mathbf x$ is a free semicircular system, free from $\mathbf d^v$,
\item The variables $d_1(v)\etc d_q(v)$ commute, are selfadjoint and for all polynomials $P$ in $q$ indeterminates, one has
\begin{equation}\label{CorLawd}
			\tau [ P(\mathbf d^v) \ ] =  \int_0^1  P\Big( F_1^{-1}(u + v_1) \etc  F_q^{-1}(u + v_q) \ \Big)du.
\end{equation}
For any $j=1\dots q$, the application $F_j^{-1}$ is the generalized inverse of the cumulative distribution function $F_j: t \mapsto \mu_j\big ( \ ]-\infty, t] \big )$ of $\mu_j$ defined by: $F_j^{-1}$ is $1$-periodic and for all $u$ in $]0,1]$, $F_j^{-1}(u) = \inf\big \{ t \in\mathbb R \ \big| \ F_j(t) \geq u \big\}$.
\end{enumerate}
Then, with probability one, for all polynomials $P$ in $p+q$ non commutative indeterminates, one has
\begin{eqnarray}
			\tau_N \big [ P( \mathbf X_N,  \mathbf D_N^v) \big ] \underset{N\rightarrow \infty}\longrightarrow\tau [ P( \mathbf x, \mathbf d^v) ] \label{cor:diag1} \\
			\big  \| P( \mathbf X_N,  \mathbf D_N^v) \big  \| \underset{N\rightarrow \infty}\longrightarrow \| P( \mathbf x, \mathbf d^v) \|, \label{cor:diag2}
\end{eqnarray}
for any $v$ in $[0,1]^q$ except in a countable set.
\end{Cor}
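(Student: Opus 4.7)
The plan is to reduce Corollary \ref{CorDetDiag} to Theorem \ref{MainTh} by verifying its two hypotheses for the diagonal family $\mathbf{D}_N^v$ in place of $\mathbf{Y}_N$. Since $\mathbf{D}_N^v$ is deterministic and self-adjoint, the almost sure statements are vacuous and it suffices to check (\ref{Th1wCV}) and (\ref{Th1sCV}) for polynomials $Q$ in $q$ self-adjoint indeterminates. Define
\begin{equation*}
\phi_{j,N}(u) \;=\; \lambda_{\lceil uN \rceil + \lfloor v_j N \rfloor}^{(N)}(j), \qquad \phi_j(u) \;=\; F_j^{-1}(u + v_j),
\end{equation*}
where the first index is taken modulo $N$ and the second uses the $1$-periodic extension. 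Since the diagonal matrices $D_j^{(N)}(v_j)$ commute, $Q(\mathbf{D}_N^v)$ is the diagonal matrix with $n$-th entry $Q(\phi_{1,N}(n/N),\ldots,\phi_{q,N}(n/N))$, and by the GNS construction applied to $\mathbf{d}^v$ the operator $Q(\mathbf{d}^v)$ corresponds to multiplication by $Q(\phi_1,\ldots,\phi_q)$ on $L^2([0,1])$. Consequently the trace and norm on both sides rewrite as
\begin{equation*}
\tau_N\big[Q(\mathbf{D}_N^v)\big] \;=\; \int_0^1 Q(\phi_{1,N}(u),\ldots,\phi_{q,N}(u))\,du, \qquad \big\|Q(\mathbf{D}_N^v)\big\| \;=\; \max_{n} \big| Q(\phi_{1,N}(n/N),\ldots,\phi_{q,N}(n/N)) \big|,
\end{equation*}
with the analogous integral and essential supremum as their limits.

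For the trace convergence I would first show that the empirical quantile $u \mapsto \lambda_{\lceil uN\rceil}^{(N)}(j)$ converges in $L^1([0,1])$ to $F_j^{-1}$: weak convergence of the empirical measure (hypothesis~1), combined with monotonicity of the enumeration (hypothesis~2) and uniform boundedness (hypothesis~3), yields convergence at every continuity point of $F_j^{-1}$, and bounded convergence upgrades this to $L^1$. Translating by $\lfloor v_j N\rfloor/N \to v_j$ and using continuity of translation in $L^1$ gives $\phi_{j,N} \to \phi_j$ in $L^1([0,1])$ for each $j$. Continuity of $Q$ on compacts and uniform boundedness then imply $L^1$ convergence of the composition $Q(\phi_{1,N},\ldots,\phi_{q,N}) \to Q(\phi_1,\ldots,\phi_q)$, which upon integration yields (\ref{Th1wCV}).

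The norm convergence requires more care. The lower bound is obtained by choosing a point $u^*$ at which each $\phi_j$ is continuous at $u^* + v_j$ and $|Q(\phi_1(u^*),\ldots,\phi_q(u^*))|$ is arbitrarily close to the essential supremum; approximating $u^*$ by $n/N$ yields a diagonal entry of $Q(\mathbf{D}_N^v)$ approaching that value. The delicate step is the upper bound: one must prevent the finite joint values $(\phi_{1,N}(n/N),\ldots,\phi_{q,N}(n/N))$ from escaping an $\varepsilon$-neighborhood of the essential image of $(\phi_1,\ldots,\phi_q)$. Here hypothesis~(3) on the convergence of the spectrum of each $D_j^{(N)}$ is essential: combined with monotonicity, it upgrades the pointwise convergence of the empirical quantiles to uniform convergence outside an arbitrarily small union of intervals around the (at most countable) jumps of the $F_j^{-1}$. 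The exceptional countable set of $v$ corresponds precisely to those shifts that synchronize jumps of distinct $F_j^{-1}$'s in a way allowing the discrete sampling to produce joint limit points outside the joint essential image; outside this countable set the upper bound goes through, and we conclude by invoking Theorem \ref{MainTh}.
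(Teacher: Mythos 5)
Your proposal is correct and follows essentially the same route as the paper: reduce to Theorem \ref{MainTh}, prove the trace convergence via convergence of the empirical quantile functions (the paper's Lemmas \ref{lem:corDiag:0} and \ref{lem:corDiag:1} are exactly the pointwise quantile convergence and its translated/truncated version that your $L^1$ argument repackages), and prove the norm convergence by showing the joint diagonal entries stay in a neighborhood of $\mathrm{Supp}\,\mu^v$, with hypothesis~3 controlling the coordinates near jumps of the $F_j^{-1}(\cdot+v_j)$ and the non-synchronization of jumps (the exceptional set of $v$) ruling out spurious joint limit points. The only difference is organizational: the paper establishes the upper bound by a compactness/contradiction argument on a subsequence $i_k/N_k\to u_0$ rather than your uniform-convergence-off-the-jumps formulation, but the content is the same.
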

\noindent Remark that the non commutative random variables $d_1\etc d_q$ can be realized as classical random variables, $d_j$ being $\mu_j$-distributed for $j=1\etc q$. The dependence between the random variables is trivial since Formula {\rm (\ref{CorLawd})} exhibits a deterministic coupling.
\\The convergence of the normalized trace (\ref{cor:diag1}) actually holds for any $v$. In general, the convergence (\ref{cor:diag2}) of the norm can fail: the family of matrices $\mathbf D=(D_1\toN,D_2\toN)$ where 
		$$D_1\toN = \ \mathrm {diag} \ (\mathbf 0_{\lfloor N/2\rfloor}, \mathbf 1_{N -  \lfloor N/2\rfloor}), \ D_1\toN = \ \mathrm {diag} \ (\mathbf 0_{\lfloor N/2\rfloor+1}, \mathbf 1_{N -  \lfloor N/2\rfloor-1})$$
gives a counterexample (consider their difference). Furthermore, let mention that it is clear that we always can take one of the $v_i$ to be zero.

\subsection{Non-white Wishart matrices}
Theorem \ref{MainTh} may be used to deduce the same result for some Wishart matrices as for the GUE matrices. Let $r, s_1 \etc  s_p\geq 1$ be integers. Let $\mathbf Z_N =(Z_1^{(N)} \etc Z_p^{(N)})$ be a family of independent positive definite Hermitian random matrices such that for $j=1 \etc p$ the matrix $Z_j\toN$ is of size $s_jN\times s_jN$. Let $\mathbf W_N =\mathbf W_N(\mathbf Z)   =(W_1^{(N)} \etc  W_{p}^{(N)})$ be the family of $rN\times rN$ matrices defined by: for each $j=1, \ldots, p$, $W_j^{(N)} = M_j^{(N)}\ Z_j\toN \ M_j^{(N)*} $, where $M_j^{(N)}$ is a $rN \times s_jN$ matrix whose entries are random variables,
	$$ M_j^{(N)}=(M_{n,m})_{\substack{1 \leq n \leq  rN \\ 1 \leq m \leq s_jN}  }, $$
and the random variables $(\sqrt 2 $Re $(M_{n,m}), \sqrt 2 $$ \mathrm{Im } \ (M_{n,m}) \ )_{1 \leq n \leq rN, 1 \leq m \leq  s_jN}$ form a centered Gaussian vector with covariance matrix $\frac{1}{rN} \mathbf 1_{2rs_jN^2}$. We assume that $M_1\toN \etc M_p\toN, \mathbf Z_N$ are independent. The matrices $W_1^{(N)} \etc  W_{p}^{(N)}$ are called non-white Wishart matrices, the white case occurring when the matrices $Z_j\toN$ are the identity matrices.

\begin{Cor}[Wishart matrices]\label{CorWisMa}
Let $\mathbf Y_N=(Y_1\toN\etc Y_q\toN)$ be a family of $rN\times rN$ random matrices, independent of $\mathbf Z_N$ and $\mathbf W_N$. Assume that the families of matrices $(Z_1\toN)\etc (Z_q\toN),\mathbf Y_N$ satisfy separately the assumptions of Theorem \ref{MainTh}. Then, almost surely, for all polynomials $P$ in $p+2q$ non commutative indeterminates, one has
\begin{equation}\label{MainThEqWis}
	\big  \| P( \mathbf W_N, \mathbf Y_N, \mathbf Y_N^*)\big  \|  \underset{N\rightarrow \infty}\longrightarrow \| P(\mathbf w, \mathbf y,\mathbf y^*) \|, 
\end{equation}
where $\| \cdot \|$ is given by Formula {\rm(\ref{DefNorm})} with $\tau$ a faithful trace for which the non commutative random variables $\mathbf w = (w_1 \etc w_p)$ and $\mathbf y=(y_1 \etc y_q)$ are free.
\end{Cor}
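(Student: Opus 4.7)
The strategy is to realize the rectangular Gaussian matrices $M_j^{(N)}$ as off-diagonal blocks of a single normalized GUE matrix on an enlarged space and then apply Theorem~\ref{MainTh} there. Set $N_* = (r + s_1 + \cdots + s_p)N$ and decompose $\mathbb C^{N_*} = V_0 \oplus V_1 \oplus \cdots \oplus V_p$ with $\dim V_0 = rN$ and $\dim V_j = s_j N$; let $\Pi_0^{(N_*)}, \ldots, \Pi_p^{(N_*)}$ be the associated orthogonal projections. Let $\mathcal X^{(N_*)}$ be a normalized $N_* \times N_*$ GUE matrix independent of $(\mathbf Z_N, \mathbf Y_N)$; the off-diagonal blocks $B_j^{(N_*)} := \Pi_0^{(N_*)} \mathcal X^{(N_*)} \Pi_j^{(N_*)}$ are jointly independent rectangular complex Gaussian matrices of entry variance $1/N_*$, so the coupling $M_j^{(N)} = \sqrt{N_*/(rN)}\, B_j^{(N_*)}$ does not alter the joint distribution of $\mathbf W_N$. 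A direct block computation yields the dilation identity
\begin{equation*}
W_j^{(N)} \oplus 0_{N_* - rN} \;=\; \frac{N_*}{rN}\, \Pi_0^{(N_*)}\, \mathcal X^{(N_*)}\, \widehat Z_j^{(N_*)}\, \mathcal X^{(N_*)}\, \Pi_0^{(N_*)},
\end{equation*}
where $\widehat Z_j^{(N_*)} := \Pi_j^{(N_*)} Z_j^{(N)} \Pi_j^{(N_*)}$ embeds $Z_j^{(N)}$ into the $j$-th block; define similarly $\widehat Y_\alpha^{(N_*)} := \Pi_0^{(N_*)} Y_\alpha^{(N)} \Pi_0^{(N_*)}$. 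The isometric embedding $\mathrm{M}_{rN}(\mathbb C) \hookrightarrow \mathrm{M}_{N_*}(\mathbb C)$ onto the $V_0$-block then sends any $P(\mathbf W_N, \mathbf Y_N, \mathbf Y_N^*)$ to a polynomial $Q(\mathcal X^{(N_*)}, F_N)$, with $F_N := (\widehat{\mathbf Z}^{(N_*)}, \widehat{\mathbf Y}^{(N_*)}, \Pi_0^{(N_*)})$, of the same operator norm.

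I then apply Theorem~\ref{MainTh} at dimension $N_*$, so that the problem reduces to strong convergence of the auxiliary family $F_N$. Every element of $F_N$ is block-diagonal with respect to $V_0 \oplus \cdots \oplus V_p$, hence so is every polynomial in $F_N$: its operator norm equals the maximum of the block operator norms, and its normalized trace $\tau_{N_*}$ is a weighted sum of the block-normalized traces with weights $rN/N_* \to r/(r + \sum_k s_k)$ and $s_j N/N_* \to s_j/(r + \sum_k s_k)$. Within $V_0$ only $Y_\alpha^{(N)}$ and $I_{rN}$ act non-trivially, while within $V_j$ ($j \geq 1$) only $Z_j^{(N)}$ does; strong convergence of each such reduced family is precisely the hypothesis of the corollary. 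Theorem~\ref{MainTh} therefore gives, almost surely, for every non-commutative polynomial $Q$,
\begin{equation*}
\|Q(\mathcal X^{(N_*)}, F_N)\| \; \longrightarrow \; \|Q(x, \widehat{\mathbf z}, \widehat{\mathbf y}, \pi_0)\|
\end{equation*}
in a limit $\mathcal C^*$-probability space $(\mathcal A, \tau, \|\cdot\|)$ with faithful trace, where $x$ is a standard semicircular element free from the commutative family $(\widehat{\mathbf z}, \widehat{\mathbf y}, \pi_0)$ and $\pi_0$ is a projection of trace $r/(r + \sum_k s_k) > 0$.

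To finish, pass to the compressed $\mathcal C^*$-probability space $(\pi_0 \mathcal A \pi_0, \tau(\pi_0)^{-1}\tau, \|\cdot\|)$ and set $w_j := \tau(\pi_0)^{-1}\, \pi_0\, x\, \widehat z_j\, x\, \pi_0$ and $y_\alpha := \widehat y_\alpha$. The dilation identity above passes to the limit and yields $\|Q(x, F)\| = \|P(\mathbf w, \mathbf y, \mathbf y^*)\|$, which is the assertion~(\ref{MainThEqWis}). Freeness of $\mathbf w$ from $\mathbf y$ in the compressed algebra follows from the freeness of $x$ from the commutative algebra containing both $\widehat{\mathbf z}$ and $\widehat{\mathbf y}$, via a standard compression argument for semicircular systems; it can alternatively be derived from the asymptotic freeness theorem of Capitaine and Casalis~\cite{CC} for Wishart matrices. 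The main obstacle is not the operator-norm estimate itself — this drops out of Theorem~\ref{MainTh} as soon as the enlarged-space dilation is in place — but rather the verification that the compressed limit $(\mathbf w, \mathbf y)$ indeed has the claimed free structure, which is an operator-algebraic computation internal to $\pi_0 \mathcal A \pi_0$.
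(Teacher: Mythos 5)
Your proof is correct and follows essentially the same route as the paper: embed the rectangular Gaussians as off-diagonal blocks of GUE matrices on the enlarged space $\mathbb C^{(r+s_1+\cdots+s_p)N}$, check block-by-block that the auxiliary family $(\widehat{\mathbf Z},\widehat{\mathbf Y},\Pi_0)$ satisfies the hypotheses of Theorem \ref{MainTh} (norms are maxima over blocks, traces are weighted averages), apply the theorem, and read off the corner. The only differences are cosmetic — you use a single GUE matrix and the dilation $\Pi_0\mathcal X\widehat Z_j\mathcal X\Pi_0$ where the paper uses $p$ independent GUE matrices and $e_0(\tilde M_j\tilde Z_j+\tilde M_j^*)^2$, and you identify the limit by compressing to $\pi_0\mathcal A\pi_0$ where the paper matches moments against the known limit law of $(\mathbf W_N,\mathbf Y_N)$; both treatments defer the identification of the free structure of $(\mathbf w,\mathbf y)$ to the known asymptotic freeness results, and your parenthetical claim that $(\widehat{\mathbf z},\widehat{\mathbf y},\pi_0)$ is commutative is inaccurate but harmless.
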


\noindent In \cite{PS}, motivated by applications in statistics and wireless communications, the authors study the global limiting behavior of the spectrum of the following matrix, referred as separable covariance matrix:
		$$C_n=\frac 1 n A_n^{1/2} X_nB_nX_n^*A_n^{1/2},$$
where $X_n$ is a $n\times m$ random matrix, $A_n^{1/2}$ is a nonnegative definite square root of the nonnegative definite $n\times n$ Hermitian matrix $A_n$ and $B_n$ is a $m \times m$ diagonal matrix with nonnegative diagonal entries. It is shown in \cite{PS} that, for $n$ large enough, almost surely the eigenvalues of $C_n$ belong in a small neighborhood of the limiting distribution under the following assumptions:
\begin{enumerate}
\item $m=m(n)$ with $c_n:=n/m \underset{n\rightarrow \infty} \longrightarrow c>0$.
\item The entries of $X_n$ are independent, identically distributed, standardized complex and with a finite fourth moment.
\item The empirical eigenvalue distribution $\mathcal L_{A_n}$ (respectively $\mathcal L_{B_n}$) of $A_n$ (respectively $B_n$) converges weakly to a compactly supported probability measure $\nu_a$ (respectively $\nu_b$) and the operator norms of $A_n$ and $B_n$ are uniformly bounded.
\item By assumptions 1,2 and 3, it is known that almost surely $\mathcal L_{C_n}$ converges weakly to a probability measure $\mu^{(c)}_{\nu_{a},\nu_{b}}$. This define a map $\Phi : (x, \nu_1,\nu_2) \mapsto \mu^{(x)}_{\nu_1,\nu_2}$ (the input $x$ is a positive real number, the inputs $\nu_1$ and $\nu_2$ are probability measures on $\mathbb R^+$). Assume that for every $\varepsilon>0$, there exists $n_0\geq 1$ such that, for all $n\geq n_0$, one has
		$$ \mathrm{Supp} \ \big( \ \mu^{(c_n)}_{\mathcal L_{A_n},\mathcal L_{B_N}} \ \big ) \subset \mathrm{Supp} \ \big( \ \mu^{(c)}_{\nu_{a},\nu_{b}} \ \big )  \ + (-\varepsilon, \varepsilon).$$
%
\end{enumerate}
Now consider the following situation, where Corollary \ref{CorWisMa} may be applied
\begin{enumerate}
\item[1'] $n=n(N)=rN$, $m=m(N)=sN$ for fixed positive integers $r$ and $s$,
\item[2'] the entries of $X_n$ are independent, identically distributed, standardized complex Gaussian,
\item[3'] the empirical eigenvalue distribution of $A_n$ (respectively ${B_n}$) converges weakly to a compactly supported probability measure,
\item[4'] for $N$ large enough, the eigenvalues of $A_n$ (respectively $B_n$) belong in a small neighborhood of its limiting distribution.
\end{enumerate}
Then we obtain by Corollary \ref{CorWisMa} that for $N$ large enough, almost surely the eigenvalues of $C_n$ belong in a small neighborhood of the limiting distribution. The advantage of our version is the replacement of assumption 4 by assumption 4'. Replacing assumptions 1' and 2' by assumptions 1 and 2 could be an interesting question.
\subsection{Block matrices}
\noindent It will be shown as a consequence of Theorem \ref{MainTh} that the convergence of norms (\ref{MainThEq}) also holds for block matrices.
\begin{Cor}[Block matrices]\label{CorBloMa}
Let $\mathbf X_N, \mathbf Y_N, \mathbf x, \mathbf y$ and $ \tau$ be as in Theorem \ref{MainTh}. Almost surely, for all positive integer $\ell$ and for all non commutative polynomials $(P_{u,v})_{1\leq u,v \leq  \ell}$, the operator norm of the $\ell N \times \ell N$ block matrix
\begin{equation}\label{NormBlock}
		\left (    \begin{array}{ccc}  P_{1,1}( \mathbf X_N, \mathbf Y_N, \mathbf Y_N^*) & \ldots & P_{1,\ell}( \mathbf X_N, \mathbf Y_N, \mathbf Y_N^*) \\
			\vdots & & \vdots\\
			P_{\ell,1}(\mathbf X_N,  \mathbf Y_N, \mathbf Y_N^*) & \ldots & P_{\ell,\ell}(\mathbf X_N,  \mathbf Y_N, \mathbf Y_N^*) \end{array} \right)
\end{equation}
converges to the norm $\| \cdot \|_{\tau_\ell \otimes \tau}$ of 
\begin{equation}
\left (    \begin{array}{ccc}  P_{1,1}( \mathbf x, \mathbf y, \mathbf y^*) & \ldots & P_{1,\ell}( \mathbf x, \mathbf y, \mathbf y^*) \\
			\vdots & & \vdots\\
			P_{\ell,1}(\mathbf x,  \mathbf y, \mathbf y^*) & \ldots & P_{\ell,\ell}(\mathbf x,  \mathbf y, \mathbf y^*)\end{array} \right),
\end{equation}
where $\| \cdot \|_{\tau_\ell \otimes \tau}$ is given by the faithful trace $\tau_\ell \otimes \tau$ defined by
\begin{equation*}
(\tau_\ell \otimes \tau)\left [ \left (    \begin{array}{ccc}  P_{1,1}( \mathbf x, \mathbf y, \mathbf y^*) & \ldots & P_{1,\ell}( \mathbf x, \mathbf y, \mathbf y^*) \\
			\vdots & & \vdots\\
			P_{\ell,1}(\mathbf x,  \mathbf y, \mathbf y^*) & \ldots & P_{\ell,\ell}(\mathbf x,  \mathbf y, \mathbf y^*)\end{array} \right)\right] = \tau \Big[  \ \frac 1 \ell  \sum_{i=1}^\ell P_{i,i}(\mathbf x,  \mathbf y, \mathbf y^*) \Big].
\end{equation*}
\end{Cor}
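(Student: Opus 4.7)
The plan is to deduce the corollary from Theorem~\ref{MainTh} by a matrix amplification argument: strong convergence of scalar polynomials in $(\XYY)$ implies strong convergence of matrix-valued polynomials, when these are interpreted as block matrices. Write the block matrix (\ref{NormBlock}) as $T_N = \sum_{u,v} E_{u,v} \otimes P_{u,v}(\XYY) \in \mathrm{M}_\ell(\mathbb{C}) \otimes \mathrm{M}_N(\mathbb{C})$, where $E_{u,v}$ denote the canonical matrix units of $\mathrm{M}_\ell(\mathbb{C})$, and denote its limit by $T \in \mathrm{M}_\ell(\mathbb{C}) \otimes \mathcal{A}$. The preliminary self-adjointification $\bigl(\begin{smallmatrix} 0 & T_N \\ T_N^* & 0 \end{smallmatrix}\bigr)$, which preserves the operator norm, reduces the problem to the case where $T_N$ is self-adjoint (at the cost of replacing $\ell$ by $2\ell$).

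For self-adjoint $T_N$, the convergence of the normalized trace $(\tau_\ell \otimes \tau_N)(T_N^k) \to (\tau_\ell \otimes \tau)(T^k)$ for every $k$ is immediate from (\ref{MainThEqVoic}): expanding $T_N^k$ into a linear combination of products of the form $\bigl(\prod_j E_{u_j,v_j}\bigr) \otimes \bigl(\prod_j P_{u_j,v_j}(\XYY)\bigr)$, the trace factors as the matrix trace in $\mathrm{M}_\ell(\mathbb{C})$ (which is $N$-independent) times a $\tau_N$-trace of a scalar polynomial in $(\XYY)$. Combined with the faithfulness of $\tau_\ell \otimes \tau$, this yields the easy half $\liminf_N \|T_N\| \geq \|T\|$ via the moment inequality $\|T_N\| \geq \bigl((\tau_\ell \otimes \tau_N)(T_N^{2k})\bigr)^{1/(2k)}$, sending $N \to \infty$ and then $k \to \infty$.

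The harder direction is the matching upper bound $\limsup_N \|T_N\| \leq \|T\|$, the ``no eigenvalues outside the limiting spectrum'' statement for the block matrix. The plan here is to rerun the Haagerup--Thorbj\o rnsen-type argument underlying the proof of Theorem~\ref{MainTh}, but with matrix coefficients in $\mathrm{M}_\ell(\mathbb{C})$ throughout. The linearization trick central to that proof already reduces any self-adjoint polynomial in $(\XYY)$ to a self-adjoint linear pencil with matrix coefficients, so the resolvent bounds, master equations, and Gaussian integration-by-parts estimates used to derive (\ref{MainThEq}) apply directly to $T_N$ once we allow the coefficient algebra to be $\mathrm{M}_\ell(\mathbb{C})$. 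The main obstacle is not conceptual but quantitative: one must verify that the constants produced by these estimates depend on the coefficient algebra only through its finite dimension $\ell^2$, so that for each fixed $\ell$ the extension goes through without modification.
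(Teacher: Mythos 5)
Your argument is workable, but it is not the route the paper takes, and the one genuinely nontrivial step in your plan is left as an unverified claim. The paper does not rerun any analytic estimates: Corollary \ref{CorBloMa} is obtained by combining Theorem \ref{MainTh} with the purely $\mathcal C^*$-algebraic amplification result Proposition \ref{PropBloMa}, which says that strong convergence of scalar polynomials in self-adjoint variables automatically upgrades to strong convergence of polynomials with coefficients in $\mathrm{M}_\ell(\mathbb C)$. That proposition is proved by forming the ultraproduct $\prod^{\mathcal U}\mathrm{M}_\ell(\mathbb C)\otimes\mathcal A_N\cong \mathrm{M}_\ell(\mathbb C)\otimes\prod^{\mathcal U}\mathcal A_N$, observing that the scalar strong convergence gives an isomorphism of the $\mathcal C^*$-algebras generated by the limits, and then invoking the uniqueness of the $\mathcal C^*$-norm on $\mathrm{M}_\ell(\mathbb C)\otimes\mathcal A$ (Lemmas \ref{LemCs1} and \ref{LemCs2}); no resolvent or Schwinger--Dyson estimate is touched. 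By contrast, your upper-bound plan ends with "one must verify that the constants depend on the coefficient algebra only through $\ell^2$", which you do not carry out. Fortunately this worry is misplaced rather than fatal: for a fixed $\ell$ and a fixed polynomial, linearizing the self-adjoint $\mathrm{M}_\ell(\mathbb C)$-coefficient polynomial produces a self-adjoint degree-one pencil with coefficients in $\mathrm{M}_{k}(\mathbb C)$ for some fixed $k$, and Steps 2--5 of the paper are already stated and proved for arbitrary fixed $k$ (with constants such as $k^{9/2}$ that are allowed to blow up in $k$); no uniformity in the coefficient dimension is ever needed. Even more directly, the linearization trick of Step 1 produces a unital $*$-homomorphism $\phi$ from $\mathcal C^*(\xyy)$ onto the ultraproduct algebra, and $\mathrm{id}_{\mathrm{M}_\ell}\otimes\phi$ is again a $*$-homomorphism, hence contractive, which gives the matrix-coefficient upper bound from the scalar inclusion of spectra with no further work. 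Your lower bound via moments and faithfulness of $\tau_\ell\otimes\tau$, and the $2\times 2$ self-adjointification, agree with the paper's general strategy and are correct.
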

\subsection{Channel matrices}
\noindent We give a potential application of Theorem \ref{MainTh} in the context of communication, where rectangular block random matrices are sometimes investigated  for the study of wireless Multiple-input Multiple-Output (MIMO) systems \cite{LS,TV}. In the case of Intersymbol-Interference, the channel matrix $H$ reflects the channel effect during a transmission and is of the form
\begin{equation}\label{MIMOMatrix}
		H= \left( \begin{array}{ccccccccc}
		A_1 & A_2 & \hdots & A_L & \mathbf 0 & \hdots & &\hdots & \mathbf 0 \\
		\mathbf 0 & A_1 & A_2 & \hdots & A_L & \mathbf 0 & & & \vdots\\
		\vdots & \mathbf 0 & A_1 & A_2 & \hdots & A_L & \mathbf 0 && \\
		 &  & \ddots & \ddots & \ddots & & \ddots &  \ddots &\vdots\\
		\vdots &	  & & \ddots & \ddots & \ddots & &\ddots & 		\mathbf 0										\\
		\mathbf 0 &  \hdots &  & \hdots &\mathbf 0 & A_1 & A_2 &\hdots  & A_L \end{array} \right),
\end{equation}
 $(A_l)_{1\leq \ell \leq  L}$ are $n_R \times n_T$ matrices that are very often modeled by random matrices e.g. $A_1 \etc A_L$ are independent and for $\ell=1\etc L$ the entries of the matrix $A_\ell$ are independent identically distributed with finite variance. The number of matrices $L$ is the length of the impulse response of the channel, $n_T$ is the number of transmitter antennas and $n_R$ is the number of receiver antennas.
\\In order to calculate the capacity of such a channel, one must know the singular value distribution of $H$,  which is predicted by free probability theory. Theorem \ref{MainTh} may be used to obtain the convergence of the singular spectrum for a large class of such matrices. For instance we investigate in Section \ref{ProofCorMIMO} the following case:

\begin{Cor}[Rectangular band matrices]\label{CorMIMO}
Let $r$ and $t$ be integers. Consider a matrix $H$ of the form (\ref{MIMOMatrix}) such that for any $\ell=1\etc L$ one has $A_\ell = C_\ell M_\ell D_\ell$ where
\begin{enumerate}
\item $\mathbf M=(M_1 \etc M_L)$  is a family of independent $rN\times tN$ random matrices such that for $\ell=1\etc L$ the entries of $M_\ell$ are independent, Gaussian and centered with variance $\sigma_\ell^2/ N$,
\item the family of $rN \times rN$ matrices $\mathbf C=(C_1 \etc C_L)$ and the family of $tN \times tN$ matrices $\mathbf D=(D_1 \etc D_L)$ satisfy separately the assumptions of Theorem \ref{MainTh},
\item the families of matrices $\mathbf M$, $\mathbf C$ and $\mathbf D$ are independent. 
\end{enumerate}
Then, almost surely, the empirical eigenvalue distribution of $H H^*$ converges weakly to a measure $\mu$. Moreover, for any $\varepsilon >0$, almost surely there exists $N_0$ such that the singular values of $H$ belong to $\mathrm{Supp}(  \mu)+(-\varepsilon, \varepsilon)$.
\end{Cor}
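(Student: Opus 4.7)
The strategy is to reduce the claim to an application of Corollary \ref{CorBloMa} via two embedding tricks: Hermitizing $H$ to make it square, and realizing each rectangular Gaussian block $M_\ell$ as the off-diagonal corner of a square GUE matrix. First, set
$$ \mathcal H_N = \begin{pmatrix} 0 & H \\ H^* & 0 \end{pmatrix},$$
whose nonzero eigenvalues are precisely $\pm$ the singular values of $H$. It suffices to verify the two hypotheses of Theorem \ref{MainThBis} for $\mathcal H_N$; the conclusion for $HH^*$ then follows by functional calculus, with $\mu$ the pushforward of the limiting spectral measure of $\mathcal H_N$ under $t \mapsto t^2$, restricted to $[0,\infty)$.

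Set $s = r + t$ and let $\tilde X_1\toN, \ldots, \tilde X_L\toN$ be independent $sN \times sN$ normalized GUE matrices, drawn independently of $\mathbf C$ and $\mathbf D$. In the $(rN, tN)$-block decomposition of such a GUE matrix, the top-right block has independent complex Gaussian entries of variance $1/(sN)$. Hence, denoting by $p_r, q_t$ the deterministic coordinate inclusions $\mathbb C^r \hookrightarrow \mathbb C^s$ and $\mathbb C^t \hookrightarrow \mathbb C^s$, one has the distributional identity
$$ M_\ell \stackrel{d}{=} \sqrt{s}\,\sigma_\ell\,(p_r \otimes \mathbf 1_N)^*\, \tilde X_\ell\toN\, (q_t \otimes \mathbf 1_N). $$
Embed each $C_\ell, D_\ell$ into $M_{sN}(\mathbb C)$ as $\hat C_\ell = C_\ell \oplus 0_{tN}$ and $\hat D_\ell = 0_{rN} \oplus D_\ell$. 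Because these embeddings sit in orthogonal corners (so that $\hat C_\ell \hat D_{\ell'} = 0$) and $\mathbf C$ is independent of $\mathbf D$, joint strong convergence of the concatenated family $(\hat C_1, \ldots, \hat C_L, \hat D_1, \ldots, \hat D_L)$ at size $sN$ reduces to the separately assumed strong convergences of $\mathbf C$ and $\mathbf D$; no freeness among the components of the limit is needed, since Theorem \ref{MainTh} does not require any. The GUE system $(\tilde X_\ell\toN)_\ell$ thus plays the role of $\mathbf X_N$ and $(\hat C_\ell, \hat D_\ell)_\ell$ the role of $\mathbf Y_N$ in Theorem \ref{MainTh}.

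With these embeddings, each block $A_\ell = C_\ell M_\ell D_\ell$ of $H$ is a polynomial in $\tilde X_\ell\toN, \hat C_\ell, \hat D_\ell$ with deterministic coefficients built from $p_r, q_t$, sitting inside $M_{sN}(\mathbb C)$. The outer band structure of $H$, and hence of $\mathcal H_N$, can then be encoded by a fixed finite collection of deterministic elementary/shift matrices in some $M_n(\mathbb C)$ (after padding $\mathcal H_N$ with zeros, if necessary, to make its total dimension a multiple of $sN$; the extra zero eigenvalues do not change the conclusion since $0$ is already in the limiting support, as $\mathcal H_N$ has a large kernel). With this presentation $\mathcal H_N$ is exactly of the form (\ref{NormBlock}), and Corollary \ref{CorBloMa} yields the almost sure convergence of $\|\mathcal H_N\|$ and of the normalized traces of polynomials in $\mathcal H_N$, whence weak convergence of $\mathcal L_{\mathcal H_N}$ to a compactly supported measure $\tilde \mu$ and, by Theorem \ref{MainThBis}, the inclusion of $\sp(\mathcal H_N)$ in $\mathrm{Supp}(\tilde \mu) + (-\varepsilon, \varepsilon)$ for $N$ large enough.

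The main obstacle is entirely bookkeeping: tracking the rectangular dimensions through the GUE-embedding of $M_\ell$ (which forces the projections $p_r, q_t$ to appear as coefficients inside the polynomial) and writing the outer band-matrix structure of $H$ explicitly as a block matrix with coefficients in $M_n(\mathbb C)$. The conceptual content of the argument is concentrated in the Hermitization of $H$ and the realization of $M_\ell$ as a GUE block, after which Corollary \ref{CorBloMa} does all the analytic work.
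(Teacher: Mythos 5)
Your proof is correct and matches the paper's (sketched) argument in all essentials: both rest on realizing each rectangular Gaussian block as the corner of a larger square GUE matrix (the device from Corollary \ref{CorWisMa}) together with the block-matrix extension of strong convergence (Corollary \ref{CorBloMa} via Proposition \ref{PropBloMa}). The only cosmetic difference is that you Hermitize $H$ while the paper works directly with the square block matrix $H^*H$; the orthogonal-corner argument you use for the joint strong convergence of $(\hat{\mathbf C},\hat{\mathbf D})$ is precisely the content of Lemmas \ref{LemHypWis1} and \ref{LemHypWis2}.
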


\section{The strategy of proof}\label{PartSheme}
\noindent Let $\mathbf X_N=(X_1\toN \etc X_p\toN)$ and $\mathbf Y_N=(Y_1\toN \etc Y_q\toN)$ be as in Theorem \ref{MainTh}. We start with some remarks in order to simplify the proof.

\begin{enumerate}

\item We can suppose that the matrices of $\mathbf Y_N$ are Hermitian. Indeed for any $j=1\etc q$, one has $Y_j\toN = $ Re $Y_j\toN +i$ Im $Y_j\toN$, where
\begin{equation*}
		\mathrm{Re} \ Y_j\toN:=  \frac{1} 2 \big(  Y_j\toN + Y_j\toNs), \ \ \ \mathrm{Im} \ Y_j\toN:=  \frac{1} {2i} \big(  Y_j\toN - Y_j\toNs)
\end{equation*}
are Hermitian matrices. A polynomial in $(\mathbf Y_N, \mathbf Y_N^*)$ is obviously a polynomial in the family $(\mathrm{Re } \ Y_1\toN\etc \mathrm{Re } \ Y_q\toN,  \mathrm{Im } \ Y_1\toN\etc \mathrm{Im } \ Y_q\toN)$ and so the latter satisfies the assumptions of Theorem \ref{MainTh} as soon as $\mathbf Y_N$ does.

\item It is sufficient to prove the theorem for deterministic matrices $\mathbf Y_N$. Indeed, the matrices $\mathbf X_N$ and $\mathbf Y_N$ are independent. Then we can choose the underlying probability space to be of the form $\Omega=\Omega_1\times \Omega_2$, with $\mathbf X_N$ (respectively $\mathbf Y_N$) a measurable function on $\Omega_1$ (respectively $\Omega_2$). The event ''for all polynomials $P$ the convergences (\ref{MainThEqVoic}) and (\ref{MainThEq}) hold`` is a measurable set $\tilde \Omega\subset \Omega$. Assume that the theorem holds for deterministic matrices. Then for almost all $\omega_2 \in \Omega_2$, there exists a set $\tilde \Omega_1(\omega_2)$ for which for all $\omega_1\in \tilde \Omega_1$, (\ref{MainThEqVoic}) and (\ref{MainThEq}) hold for $(\mathbf X_N(\omega_1), \mathbf Y_N(\omega_2))$. The set of such couples $(\omega_1,\omega_2)$ is of outer measure one and is contained in $\tilde \Omega$, hence by Fubini's theorem $\tilde \Omega$ is of measure one.

\item It is sufficient to prove that for any polynomial the convergence of the norm in (\ref{MainThEq}) holds almost surely (instead of almost surely the convergence holds for all polynomials). Indeed we can switch the words ''for all polynomials with rational coefficients`` and ''almost surely`` and both the left and the right hand side in (\ref{MainThEq}) are continuous in $P$.

\end{enumerate}

\noindent In the following, when we say that $\mathbf Y_N=(Y_1\toN \etc Y_q\toN)$ is as in Section \ref{PartSheme}, we mean that $\mathbf Y_N$ is a family of deterministic Hermitian matrices satisfying (\ref{Th1wCV}) and (\ref{Th1sCV}).
\\
\\Remark that by (\ref{Th1sCV}), almost surely the supremum over $N$ of $\| Y_j\toN\|$ is finite for all $j=1\etc q$. Hence by Theorem \ref{VoiTh}, with probability one the non commutative law of $(\mathbf X_N,\mathbf Y_N)$ in $(\MN, .^*, \tau_N )$ converges to the law of non commutative random variables  $(\mathbf x,  \mathbf y)$ in a $^*$-probability space $( \mathcal A, .^*, \tau,)$: almost surely, for all polynomials $P$ in $p+q$ non commutative indeterminates, one has
\begin{equation}\label{ConvSheme1}
		\tau_N\big [ P(\mathbf X_N, \mathbf Y_N) \ \big ] \underset{N\rightarrow\infty}{\longrightarrow} \tau[P(\mathbf x, \mathbf y)],
\end{equation}
where the trace $\tau$ is completely defined by:
\begin{itemize}
\item $\mathbf x  = (x_1\etc x_p)$ is a free semicircular system,
\item $\mathbf y  =(y_1 \etc  y_q)$ is the limit in law of $\mathbf Y_N$,
\item $\mathbf x, \mathbf y$ are free.
\end{itemize}
Since $\tau$ is faithful on the $^*$-algebra spanned by $\mathbf x$ and $\mathbf y$, we can always assume that $\tau$ is a faithful trace on $\mathcal A$. Moreover, the matrices $\mathbf Y_N$ are uniformly bounded in operator norm. If we define $\| \cdot \|$ in $\mathcal A$ by Formula (\ref{DefNorm}), then $\| y_j\|$ is finite for every $j=1\etc q$. Hence, we can assume that $\mathcal A$ is a $\mathcal C^*$-probability space endowed with the norm $\| \cdot \|$.
\\
\\Haagerup and Thorbj\o rnsen describe in \cite{HT} a method to show that for all non commutative polynomials $P$, almost surely one has
\begin{equation}\label{HTmet}
		 \big \| P(\mathbf X_N )\big \|  \underset{N\rightarrow\infty}{\longrightarrow}  \| P(\mathbf x ) \|.
\end{equation}
We present in this section this method with some modification to fit our situation. First, it is easy to see the following.
\begin{Prop}For all non commutative polynomials $P$, almost surely one has
\begin{equation}\label{MajNorm}
		\underset{N \rightarrow\infty}{ \liminf } \ \big  \| P(\XYY )\big  \| \geq \| P(\xyy  )\|.
\end{equation}
\end{Prop}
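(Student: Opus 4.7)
The plan is to invoke the $\mathcal{C}^*$-formula~(\ref{DefNorm}), which expresses the norm as the limit of $2k$-th roots of trace moments, combined with the weak convergence~(\ref{ConvSheme1}) already supplied by Voiculescu's theorem. Fix a polynomial $P$ and set $A_N := P(\XYY)$ and $a := P(\xyy)$.

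The first step I would carry out is the elementary trace-versus-norm bound: since $A_N^* A_N$ is a positive $N \times N$ matrix, its normalized trace $\tau_N$ is dominated by its spectral radius, so
\[
\tau_N\bigl[(A_N^* A_N)^k\bigr] \leq \|A_N^* A_N\|^k = \|A_N\|^{2k}
\]
for every integer $k \geq 1$, which rearranges to $\|A_N\| \geq \bigl(\tau_N[(A_N^* A_N)^k]\bigr)^{1/(2k)}$.

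Next, since $(P^*P)^k$ is itself a polynomial in the $p+2q$ non commutative indeterminates, the convergence~(\ref{ConvSheme1}) applied to this polynomial yields, on a single almost sure event obtained by countable intersection over $k \in \mathbb{N}$,
\[
\tau_N\bigl[(A_N^* A_N)^k\bigr] \limN \tau\bigl[(a^* a)^k\bigr] \quad \text{for every } k \geq 1.
\]
Taking $\liminf$ in $N$ in the previous bound then gives $\liminf_{N \to \infty} \|A_N\| \geq \bigl(\tau[(a^* a)^k]\bigr)^{1/(2k)}$ for each $k$, and letting $k \to \infty$ collapses the right hand side to $\|a\|$ through~(\ref{DefNorm}), which is the desired inequality.

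No serious obstacle arises: the argument is the standard lower semicontinuity of the $\mathcal{C}^*$-norm with respect to the weak convergence of states on a tracial space, and every ingredient (the spectral bound for traces, Voiculescu's theorem, and the moment formula for the norm) is either elementary or already granted by the paper's setup. The only subtlety worth noting is that the almost sure event on which the trace convergence holds must be taken independent of $k$, which is automatic because countably many almost sure events have an almost sure intersection.
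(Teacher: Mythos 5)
Your argument is correct, and it takes a genuinely different route from the paper's. Both proofs rest on the same two pillars, namely the almost sure convergence of traces supplied by Voiculescu's theorem and the faithfulness of the limiting trace $\tau$, but they exploit them differently. The paper first reduces to the case where $H_N=P(\XYY)$ is positive via $\|a\|^2=\|a^*a\|$, invokes Hamburger's theorem to upgrade moment convergence to weak convergence of the empirical spectral measures $\mathcal L_N$ towards the spectral measure $\mu$ of $h=P(\xyy)$, uses faithfulness (through \cite[Proposition 3.15]{NS}) to identify the top of $\mathrm{Supp}(\mu)$ with $\|h\|$, and then tests $\mathcal L_N$ against a bump function to force eigenvalues of $H_N$ into any prescribed neighborhood of $\|h\|$. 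You instead bound $\|A_N\|$ from below by $\big(\tau_N[(A_N^*A_N)^k]\big)^{1/(2k)}$, pass to the limit in $N$ for each fixed $k$ on a single almost sure event, and let $k\to\infty$ via Formula (\ref{DefNorm}) --- which is exactly where faithfulness enters for you. Your route is shorter and bypasses the measure-theoretic detour entirely; the paper's route yields the marginally stronger spectral statement that eigenvalues of $H_N$ eventually populate every neighborhood of $\|h\|$, which is in the spirit of the spectrum inclusions used in the rest of the argument. Your handling of the two quantifier exchanges (taking the countable intersection over $k$ before letting $N\to\infty$, and taking the supremum over $k$ only after the $\liminf$ in $N$) is the right way to close the argument.
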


\begin{proof} In a $\mathcal C^*$-algebra $(\mathcal A, .^*, \| \cdot \|)$, one has $\forall a\in \mathcal A$, $\|a\|^2=\|a^*a\|$. Hence, without loss of generality, we can suppose that $H_N:=P(\XYY)$ is non negative Hermitian and $h:=P(\xyy)$ is selfadjoint. Let $\mathcal L_N$ denote the empirical spectral distribution of $H_N$: 
	$$\mathcal L_N=\frac 1N \sum_{i=1}^N \delta_{\lambda_i},$$
where $\lambda_1 \etc \lambda_N$ denote the eigenvalues of $H_N$ and $\delta_{\lambda}$ the Dirac measure in $\lambda\in \mathbb R$. By (\ref{ConvSheme1}) and Hamburger's theorem \cite{HAM}, almost surely $\mathcal  L_N$ converges weakly to the compactly supported probability measure $\mu$ on $\mathbb R$ given by: for all polynomial $P$,
		$$\int P\textrm{d}\mu = \tau[ P(h)].$$
Since $\tau$ is faithful, the extrema of the support of $\mu$ is $\| h\|$ (\cite[proposition 3.15]{NS}). In particular, if $f:\mathbb R \rightarrow \mathbb R$ is a non negative continuous function whose support is the closure of a neighborhood of $\| h\|$ ($f$ not indentically zero), then almost surely there exists a $N_0\geq 0$ such that for all $N\geq N_0$ one has $\mathcal L_N(f)>0$. Hence for $N\geq N_0$ some eigenvalues of $H_N$ belong to the considered neighborhood of $ \| h \|$ and so $\|H_N\| \geq \| h\|$.
 \end{proof}


\noindent It remains to show that the limsup is smaller than the right hand side in (\ref{MajNorm}). The method is carried out in many steps.

\begin{center}
\begin{minipage}{16cm}{\bf Step 1.  A linearization trick:} \it With inequality {\rm{(\ref{MajNorm})}} established, the question of almost sure convergence of the norm of any polynomial in the considered random matrices can be reduced to the question of the convergence of the spectrum of any matrix-valued selfadjoint degree one polynomials in these matrices. More precisely, in order to get {\rm{(\ref{HTmet})}}, it is sufficient to show that for all $\varepsilon >0$, $k$ positive integer, $L$ selfadjoint degree one polynomial with coefficients in $\Mk$, almost surely there exists $N_0$ such that for all $N\geq N_0$,
\begin{equation}\label{SpecRed}
		\sp \big ( \ L(\XYY  ) \ \big ) \ \subset \sp \big ( \ L(\xyy  ) \ \big ) +(-\varepsilon, \varepsilon).
\end{equation}
\end{minipage}
\end{center}
We refer the readers to \cite[Parts 2 and 7]{HT} for the proof of this step, which is based on $\mathcal C^*$-algebra and operator space techniques. We only recall here the main ingredients. By an argument of ultraproduct it is sufficient to show the following: Let $(\tilde {\mathbf x}, \tilde {\mathbf y})$ be elements of a $\mathcal C^*$-algebra. Assume that for all selfadjoint degree one polynomials $L$ with coefficients in $\Mk$, one has
\begin{equation}\label{Step1Eq1}
	\sp \big ( \ L(\tilde {\mathbf x}, \tilde {\mathbf y}, \tilde {\mathbf y}^*  ) \ \big ) \ \subset \sp \big ( \ L(\xyy ) \ \big ).
\end{equation}
Then for all polynomials $P$ one has $\|P(\xyy)\| \geq \| P(\tilde {\mathbf x}, \tilde {\mathbf y}, \tilde {\mathbf y}^*) \| $. The linearization trick used to prove that fact arises from matrix manipulations and Arveson's theorem: with a dilation argument, one deduces from (\ref{Step1Eq1}) that there exists $\phi$ a unital $*$-homomorphism between the $\mathcal C^*$-algebra spanned by $(\mathbf x, \mathbf y)$ and the one spanned by $(\tilde {\mathbf x}, \tilde {\mathbf y})$ such that one has $\phi(x_i)=\tilde x_i$ for $i=1\etc p$, and $\phi(y_i)=\tilde y_i$ for $i=1\etc q$. A $*$-homomorphism being always contractive, one gets the result.
\\
\\We fix a selfadjoint degree one polynomial $L$ with coefficients in $\Mk$. To prove (\ref{SpecRed}) we apply the method of Stieltjes transforms. We use an idea from Bai and Silverstein in \cite{BS}: we do not compare the Stieltjes transform of $L(\mathbf X_N, \mathbf Y_N)$ with the one of $L(\mathbf x, \mathbf y)$, but with an intermediate quantity, where in some sense we have taken partially the limit $N$ goes to infinity, only for the GUE matrices. To make it precise, we realize the non commutative random variables $\big (\mathbf x, \mathbf y, (\mathbf Y_N)_{N\geq 1} \big )$ in a same $\mathcal C^*$-probability space $(\mathcal A, .^*, \tau, \| \cdot \|)$ with faithful trace, where
 \begin{itemize}
\item the families $\mathbf x$, $\mathbf y$, $\mathbf Y_1$, $\mathbf Y_2, \dots, \mathbf Y_N, \dots$ are free,
\item for any polynomials $P$ in $q$ non commutative indeterminates $\tau[P(\mathbf Y_N)]:=\tau_N[P(\mathbf Y_N)]$.
\end{itemize}

\noindent The intermediate object $L(\mathbf x, \mathbf Y_N )$ is therefore well defined as an element of $\mathcal A$. We use a theorem about norm convergence, due to D. Shlyakhtenko and stated in Appendix \ref{App1}, to relate the spectrum of $L(\mathbf x, \mathbf Y_N )$ with the spectrum of $L(\mathbf x, \mathbf y)$.

\begin{center}
\begin{minipage}{16cm}{\bf Step 2.  An intermediate inclusion of spectrum:} \it for all $\varepsilon>0$ there exists $N_0$ such that for all $N\geq N_0$, one has
\begin{equation}\label{SpecRed2}
		\sp \big ( \ L(\mathbf x, \mathbf Y_N ) \ \big ) \ \subset \sp \big ( \ L(\mathbf x, \mathbf y ) \ \big ) +(-\varepsilon, \varepsilon).
\end{equation}
\end{minipage}
\end{center}

\noindent We define the Stieltjes transforms $g_{L_N}$ and $g_{\ell_N}$ of $L_N=L(\mathbf X_N, \mathbf Y_N)$ and respectively $\ell_N=L(\mathbf x, \mathbf Y_N)$ by the formulas
\begin{eqnarray}
		g_{L_N} ( \lambda) & = & \esp \bigg [ \ttrN \Big[   \big (\lambda \unun_N -  L(\mathbf X_{N}, \mathbf Y_{N} ) \ \big)^{-1} \  \Big] \bigg ],\\
		g_{\ell_N}( \lambda) & = & \ttr \Big[  \big ( \lambda \unun-  L(\mathbf x, \mathbf Y_N )\ \big)^{-1} \  \Big]   \label{StieltInfty},
\end{eqnarray}
for all complex numbers $\lambda$ such that $ \mathrm{Im } \ \lambda>0$.

\begin{center}
\begin{minipage}{16cm}{\bf Step 3.  From Stieltjes transform to spectra:} \it  In order to show (\ref{Step1Eq1}) with (\ref{SpecRed2}) granted, 
%
%
it is sufficient to show the following: for every $\varepsilon>0$, there exist $N_{0}, \gamma, c, \alpha>0$ such that for all $N \geq N_{0}$, for all $\lambda$ in $\mathbb C$ such that $\varepsilon\leq (\mathrm{Im } \  \lambda)^{-1} \leq N^{\gamma}$, one has
\begin{equation}\label{EqPurp}
		| g_{L_N}(\lambda) - g_{\ell_N}(\lambda) | \leq \frac {c}{N^{2}}( \mathrm{Im} \ \lambda)^{-\alpha}.
\end{equation}
\end{minipage}
\end{center}

\noindent The proof of Estimate (\ref{EqPurp}) represents the main work of this paper. For this task we consider a generalization of the Stieltjes transform. We define the $\Mk$-valued Stieltjes transforms $G_{L_N}$ and $G_{\ell_N}$ of $L_N=L(\mathbf X_N, \mathbf Y_N)$ and respectively $\ell_N=L(\mathbf x, \mathbf Y_N)$ by the formulas
\begin{eqnarray}
		G_{L_N} ( \Lambda) & = & \esp \bigg [ \traceN \Big[   \big (\Lambda \UnN -  L(\mathbf X_{N}, \mathbf Y_{N} ) \ \big)^{-1} \  \Big] \bigg ],\\
		G_{\ell_N}( \Lambda) & = & \trace \Big[  \big ( \Lambda \Un-  L(\mathbf x, \mathbf Y_N )\ \big)^{-1} \  \Big] ,
\end{eqnarray}
for all $k \times k$ matrices $\Lambda$ such that the Hermitian matrix $ \mathrm{Im} \ \Lambda:= (\Lambda- \Lambda^*)/(2i)$ is positive definite. Since $g_{L_N}(\lambda) = \tau_k[ G_{L_N}(\lambda \mathbf 1_k)]$ and $g_{\ell_N}(\lambda) = \tau_k[ G_{\ell_N}(\lambda \mathbf 1_k)]$, a uniform control of $\|G_{L_N} ( \Lambda) - G_{\ell_N}( \Lambda)\|$ will be sufficient to show (\ref{EqPurp}). Here $\| \cdot \|$ denotes the operator norm.
\\
\\Due to the block structure of the matrices under consideration, these quantities are more relevant than the classical Stieltjes transforms. The polynomial $L$ is selfadjoint and of degree one, so we can write $L_N=a_0\otimes \mathbf 1_N +S_N+T_N$, $\ell_N=a_0\otimes \mathbf 1 +s +T_N$, where 
		$$S_N  =  \sum_{j=1}^p a_j \otimes X_j^{(N)}, \ s = \sum_{j=1}^p a_j \otimes x_j,  \ T_N =  \sum_{j=1}^q b_j \otimes Y_j^{(N)},$$
and $a_0\etc a_p,b_1 \etc b_q$ are Hermitian matrices in $\Mk$. We also need to introduce the $\Mk$-valued Stieltjes transforms $G_{T_N}$ of $T_N$:
\begin{eqnarray}
		G_{T_N}(\Lambda)  =  \traceN \Big[  \big ( \Lambda \Un-  T_N\ \big)^{-1} \  \Big],
\end{eqnarray}
for all $\Lambda$ in $\Mk$ such that Im $\Lambda$ is positive definite.
\\
\\The families $\mathbf x$ and $\mathbf Y_N$ being free in $\mathcal A$ and $\mathbf x$ being a free semicircular system, the theory of matrix-valued non commutative random variables gives us the following equation relating $G_{\ell_N}$ and $G_{T_N}$. It encodes the fundamental property of $\mathcal R$-transforms, namely the linearity under free convolution.

\begin{center}
\begin{minipage}{16cm}{\bf Step 4.  The subordination property for $\Mk$-valued non commutative random variables:} \it For all $\Lambda$ in $\Mk$ such that $\mathrm{Im} \ \Lambda$ is positive definite, one has
\begin{eqnarray}\label{Step4Eq}
		G_{\ell_N}(\Lambda) = G_{T_N}\Big ( \Lambda -a_0  - \mathcal R_s\big (  G_{\ell_N} (\Lambda) \ \big) \ \Big ),
\end{eqnarray}
where
		$$\mathcal R_s : M \mapsto \sum_{j=1}^p a_j M a_j.$$
\end{minipage}
\end{center}

\noindent We show that the fixed point equation implicitly given by (\ref{Step4Eq}) is, in a certain sense, stable under perturbations. On the other hand, by the asymptotic freeness of $\mathbf X_N$ and $\mathbf Y_N$, it is expected that Equation (\ref{Step4Eq}) is asymptotically satisfied when $G_{\ell_N}$ is replace by $G_{L_N}$. Since, in order to apply Step 3, we want an uniform control, we make this connection precise by showing the following:

\begin{center}
\begin{minipage}{16cm}{\bf Step 5.  The asymptotic subordination property for random matrices:} \it For all $\Lambda$ in $\Mk$ such that $\mathrm{Im} \ \Lambda$ is positive definite, one has
\begin{eqnarray}\label{Step5Eq}
		G_{L_N}(\Lambda) = G_{T_N}\Big ( \Lambda -a_0 - \mathcal R_s\big (  G_{L_N} (\Lambda) \ \big) \ \Big ) +\Theta_N(\Lambda),
\end{eqnarray}
where $\Theta_N(\Lambda)$ satisfies
\begin{eqnarray*}
		\left \| \Theta_N(\Lambda) \right \| & \leq & \frac {c } {N^2}  \left \| (\mathrm{Im } \ \Lambda)^{-1} \right\|^5
\end{eqnarray*}
for a constant $c$ and with $\| \cdot \|$ denoting the operator norm.
\end{minipage}
\end{center}

\noindent {\bf Organization of the proof}
\\We tackle the different points of the proof described above in the following order:

\begin{itemize}

\item {\bf Proof of Step 4.} The precise statement of the subordination property for $\Mk$-valued non commutative random variables is contained in Proposition \ref{PropRtr} and Proposition \ref{UniqSol}. We highlight in this section the relevance of matrix-valued Stieltjes transforms in a quite general framework.

\item {\bf Proof of Step 5.} The asymptotic subordination property for random matrices is stated in Theorem \ref{SubMATh} in a more general situation. The matrices $\mathbf Y_N$ can be random, independent of $\mathbf X_N$, satisfying a Poincar\'e inequality, without assumption on their asymptotic properties. This result is based on the Schwinger-Dyson equation and on the Poincar\'e inequality satisfied by the law of $\mathbf X_N$.

\item {\bf Proof of Estimate (\ref{EqPurp}).} The estimate will follow easily from the two previous items.

\item {\bf Proof of Step 2.} This part is based on $\mathcal C^*$-algebra techniques. Step 2 is a consequence of a result due to D. Shlyakhtenko which is stated Theorem \ref{ShlTh} of Appendix \ref{App1}. In a previous version of this article, when we did not know this result, we used the subordination property with $L(\mathbf x, \mathbf Y_N)$ replaced by $L(\mathbf x, \mathbf y)$ and $T_N$ replaced by its limit in law $t =  \sum_{j=1}^q b_j \otimes y_j$. Hence we obtained Theorem \ref{MainTh} with additional assumptions on $\mathbf Y_N$, notably a uniform rate of convergence of $G_{T_N}$ to the $\Mk$-valued Stieltjes transform of $t$.

\item {\bf Proof of Step 3.} The method is quite standard once Steps 2 and 4 are established. We use a version due to \cite{GKZ} which is based on the use of local concentration inequalities.
\end{itemize}

\section[Proof of Step 4]{Proof of Step 4: the subordination property for matrix-valued non commutative random variables}\label{SectionMkncrv}
\noindent In random matrix theory, a classical method lies in the study of empirical eigenvalue distribution by the analysis of its Stieltjes transform. In many situation, it is shown that this functional satisfies a fixed point equation and a lot of properties of the considered random matrices are deduced from this fact. The purpose of this section is to emphasize that this method can be generalized in the case where the matrices have a macroscopic block structure.
\\
\\Let $(\mathcal A, .^*,\tau, \| \cdot \|)$ be a $\mathcal C^*$-probability space with a faithful trace and $k\geq 1$ an integer. The algebra $\MkA$, formed by the $k \times k$ matrices with coefficients in $\mathcal A$, inherits the structure of $\mathcal C^*$-probability space with trace $(\tau_k\otimes \tau)$ and norm $\| \cdot \|_{\tau_k\otimes \tau}$ defined by (\ref{DefNorm}) with $\tau_k\otimes \tau$ instead of $\tau$. We also shall consider the linear functional $\trace$, called the partial trace.
\\
\\For any matrix $\Lambda$ in $\Mk$ we denote Im $\Lambda$ the Hermitian matrix $\frac 1 {2i}(\Lambda-\Lambda^*)$. We write Im $\Lambda>0$ whenever the matrix Im $\Lambda$ is positive definite and we denote
		$$\Mk^+=\big\{ \Lambda\in \Mk \ \big | \ \mathrm{Im} \ \Lambda>0 \big\}.$$
This lemma will be used throughout this paper. See \cite[Lemma 3.1]{HT} for a proof.
\begin{Lem}\label{ImLem} Let $z$ in $\MkA$ be selfadjoint. Then for any $\Lambda\in \Mk^+$, the element $(\Lambda\Un -z)$ is invertible and 
\begin{equation}\label{ImEst}
		\big \| (\Lambda\Un -z)^{-1} \big \|_{\tau_k\otimes \tau}\leq \| (\mathrm{Im} \ \Lambda)^{-1}\|.
\end{equation}
\end{Lem}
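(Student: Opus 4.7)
The plan is the standard reduction to the case where the imaginary part of $\Lambda$ equals the identity, via a conjugation argument. First, decompose $\Lambda = A + iB$ where $A = \tfrac{1}{2}(\Lambda + \Lambda^*)$ and $B = \mathrm{Im}\,\Lambda$ are Hermitian elements of $\Mk$ with $B>0$. Since $z$ is selfadjoint in $\MkA$ and $A$ is Hermitian in $\Mk$, the element $A \Un - z$ is selfadjoint in $\MkA$, and we may write
\[
\Lambda \Un - z \;=\; (A\Un - z) \;+\; i\,(B\Un),
\]
with $B\Un$ positive and invertible in $\MkA$.

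Next, conjugate by $B^{-1/2}\Un$ (well-defined since $B>0$ in $\Mk$) to obtain
\[
(B^{-1/2}\Un)\,(\Lambda \Un - z)\,(B^{-1/2}\Un) \;=\; H \;+\; i\,(\mathbf{1}_k \otimes \mathbf{1}),
\]
where $H := (B^{-1/2}\Un)(A\Un - z)(B^{-1/2}\Un)$ is selfadjoint in $\MkA$. Because $H$ lies in a $\mathcal{C}^*$-algebra and is selfadjoint, its spectrum is real, so $H + i(\mathbf{1}_k \otimes \mathbf{1})$ is invertible; the continuous functional calculus applied to the function $t \mapsto (t+i)^{-1}$ gives the bound
\[
\bigl\| \bigl(H + i(\mathbf{1}_k \otimes \mathbf{1})\bigr)^{-1} \bigr\|_{\tau_k \otimes \tau} \;\leq\; \sup_{t \in \mathrm{sp}(H)} |t+i|^{-1} \;\leq\; 1.
\]

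Inverting the conjugation shows that $\Lambda\Un - z$ is invertible in $\MkA$, with
\[
(\Lambda\Un - z)^{-1} \;=\; (B^{-1/2}\Un)\,\bigl(H + i(\mathbf{1}_k\otimes\mathbf{1})\bigr)^{-1}\,(B^{-1/2}\Un).
\]
Submultiplicativity of the $\mathcal{C}^*$-norm, together with the fact that tensoring with the unit preserves norm (so $\|B^{-1/2}\Un\|_{\tau_k\otimes\tau} = \|B^{-1/2}\|$), then yields
\[
\bigl\|(\Lambda\Un - z)^{-1}\bigr\|_{\tau_k\otimes\tau} \;\leq\; \|B^{-1/2}\|^{2}\cdot 1 \;=\; \|B^{-1}\| \;=\; \|(\mathrm{Im}\,\Lambda)^{-1}\|.
\]

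I do not expect serious obstacles: the argument is essentially the well-known fact that an operator of the form $H+i$ with $H$ selfadjoint in a $\mathcal{C}^*$-algebra is invertible with norm-bounded inverse. The only point worth checking carefully is that the conjugating element $B^{-1/2}\Un$ and its product with $(A\Un - z)$ genuinely live in the $\mathcal{C}^*$-algebra $\MkA$ (so that functional calculus is applicable to $H$), and that the faithfulness of $\tau$ ensures the norm $\|\cdot\|_{\tau_k\otimes\tau}$ defined by \eqref{DefNorm} agrees with the ambient $\mathcal{C}^*$-norm, so that the submultiplicativity step is legitimate.
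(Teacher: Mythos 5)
Your argument is correct, and it is precisely the standard conjugation-by-$(\mathrm{Im}\,\Lambda)^{-1/2}\Un$ argument that the paper itself does not reproduce but delegates to \cite[Lemma 3.1]{HT}; the reduction to $H+i\,(\mathbf 1_k\otimes\mathbf 1)$ with $H$ selfadjoint, the functional-calculus bound by $1$, and the identity $\|B^{-1/2}\|^2=\|B^{-1}\|$ all check out. The two caveats you flag at the end (that $B^{-1/2}\Un$ lies in $\MkA$ via the isometric unital embedding of $\Mk$, and that faithfulness of $\tau$ makes $\|\cdot\|_{\tau_k\otimes\tau}$ the ambient $\mathcal C^*$-norm) are exactly the right ones and are harmless here.
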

\noindent On the right hand side, $\| \cdot\|$ denotes the operator norm in $\Mk$.
%

\noindent For a selfadjoint non commutative random variable $z$ in $\MkA$, its $\Mk$-valued Stieltjes transform is defined by
\begin{eqnarray*}  \begin{array}{cccc} G_z : & \Mk^+ & \rightarrow & \Mk \\
				     				& \Lambda & \mapsto  &\trace \Big [ \big( \Lambda\otimes \mathbf 1-z)^{-1} \Big]. \end{array}
\end{eqnarray*}
The functional $G_z$ is well defined by Lemma \ref{ImLem} and satifies
		$$\forall \Lambda \in \Mk^+, \ \|G_z(\Lambda)\| \leq \| (\mathrm{Im} \ \Lambda)^{-1}\|.$$
It maps $\Mk^+$ to $\Mk^- = \big\{ \Lambda\in \Mk \ \big | \ -\Lambda \in \Mk^+ \big\}$ and is analytic (in $k^2$ complex variables on the open set $\Mk^+\subset \mathbb C^{k^2}$). Moreover, it can be shown (see \cite{VOI3}) that $G_z$ is univalent on a set of the form $U_\delta = \big\{ \Lambda \in \Mk^+ \ \big | \ \|\Lambda^{-1}\|<\delta \ \big \}$ for some $\delta>0$, and its inverse $G_z^{(-1)}$ in $U_\delta$ is analytic on a set of the form $V_\gamma=\big \{ \Lambda \in \Mk^- \ \big | \ \| \Lambda\|<\gamma \big\}$ for some $\gamma>0$.
\\
\\The amalgamated $\mathcal R$-transform over $\Mk$ of $z\in \Mk\otimes \mathcal A$ is the function $\mathcal R_z : G_z(U_\delta)\rightarrow \Mk$ given by
		$$\mathcal R_z(\Lambda) = G_z^{(-1)}(\Lambda) - \Lambda^{-1}, \ \ \forall \Lambda \in G_z(U_\delta).$$
The following proposition states the fundamental property of the amalgamated $\mathcal R$-transform, namely the subordination property, which is the keystone of our proof of Theorem \ref{MainTh}. 
\begin{Prop}\label{PropRtr} Let $ \mathbf x=(x_1\etc x_p)$ and $\mathbf y=(y_1 \etc y_q)$ be selfadjoint elements of $\mathcal A$ and let $\mathbf a=(a_1\etc a_p)$ and $\mathbf b=(b_1\etc b_q)$ be $k\times k$ Hermitian matrices. Define the elements of $\MkA$
\begin{equation*}
		s = \sum_{j=1}^p a_j\otimes x_j, \ \ \ t = \sum_{j=1}^q b_j\otimes y_j.
\end{equation*}
Suppose that the families $\mathbf x$ and $\mathbf y$ are free. Then one has
 \begin{enumerate}
\item {\bf Linearity property:} There is a $\gamma$ such that, in the domain $V_\gamma$, one has
		\begin{equation}\label{LinP}
				\mathcal R_{s+t}  = \mathcal R_{s} + \mathcal R_t .
		\end{equation}
\item {\bf Subordination property:} There is $\delta$ such that, for every $\Lambda$ in $U_\delta$, one has
		\begin{equation}
		G_{s+t}(\Lambda) = G_{t}\Big( \Lambda - \mathcal R_s \big ( \ G_{s+t}(\Lambda) \ \big) \ \Big).
		\end{equation}
\item {\bf Semicircular case:} If $(x_1 \etc x_p)$ is a free semicircular system, then we get
\begin{equation}\label{Rs}
		\mathcal R_s : \Lambda \mapsto \sum_{j=1}^pa_j\Lambda a_j.
\end{equation}
 \end{enumerate}
 \end{Prop}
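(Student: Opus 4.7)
The plan is to deduce this proposition from the general theory of operator-valued free probability, viewing $(\MkA, \trace)$ as an $\Mk$-valued probability space with conditional expectation $\trace$ onto $\Mk\Un$. The first task is to show that, in this operator-valued setting, the subalgebras generated over $\Mk$ by $s$ and by $t$ respectively are free with amalgamation over $\Mk$. Spelling out what this means, one must verify that $\trace[z_1 z_2 \cdots z_n]=0$ whenever the $z_i$ are $\trace$-centred monomials that alternate between the two subalgebras. Since each such monomial has the form $a\otimes P_i(\mathbf x)$ or $b\otimes Q_j(\mathbf y)$ with $\tau[P_i(\mathbf x)]=0$ or $\tau[Q_j(\mathbf y)]=0$, this reduces by multilinearity to evaluating $(\tau_k\otimes\tau)$ on products whose trace in $\mathcal A$ factor vanishes by the freeness of $\mathbf x$ and $\mathbf y$ in $(\mathcal A,\tau)$; the matrix coefficients pass freely through $\trace$ because they belong to the amalgamation algebra.

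Once freeness with amalgamation over $\Mk$ is in hand, the linearity property (\ref{LinP}) is a direct invocation of Voiculescu's operator-valued $\mathcal R$-transform theory: the amalgamated $\mathcal R$-transform linearises free additive convolution over any amalgamation algebra, so on a common domain $V_\gamma$ we have $\mathcal R_{s+t}=\mathcal R_s+\mathcal R_t$.

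The subordination property then reduces to a formal identity. For $\Lambda\in U_\delta$ chosen small enough that $W:=G_{s+t}(\Lambda)$ lies in $V_\gamma$, the definition of $\mathcal R_{s+t}$ together with linearity gives
\begin{equation*}
\Lambda = G_{s+t}^{(-1)}(W) = \mathcal R_{s+t}(W)+W^{-1} = \mathcal R_s(W) + \bigl(\mathcal R_t(W)+W^{-1}\bigr) = \mathcal R_s(W) + G_t^{(-1)}(W),
\end{equation*}
so that $G_t^{(-1)}(W)=\Lambda-\mathcal R_s(W)$; applying $G_t$ yields $G_{s+t}(\Lambda)=G_t\bigl(\Lambda-\mathcal R_s(G_{s+t}(\Lambda))\bigr)$. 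The only analytic step is to shrink $\delta$ so that $\Lambda-\mathcal R_s(W)$ lies in the set on which $G_t\circ G_t^{(-1)}$ is the identity; this is routine from the univalence and analyticity of $G_t$ established before the proposition.

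For the semicircular case, I would compute directly the covariance of $s$ viewed as an $\Mk$-valued element: expanding
\begin{equation*}
\eta(\Lambda) := \trace\bigl[s\,(\Lambda\Un)\,s\bigr] = \sum_{i,j=1}^p a_i\Lambda a_j\,\tau[x_i x_j] = \sum_{j=1}^p a_j \Lambda a_j,
\end{equation*}
since $\tau[x_i x_j]=\delta_{ij}$ for a free standard semicircular system. Because $s$ is an $\Mk$-valued semicircular element, its amalgamated $\mathcal R$-transform coincides with the covariance map $\eta$, which gives (\ref{Rs}). The principal source of friction in this argument is not the algebra but the bookkeeping of the domains $U_\delta$ and $V_\gamma$: one must ensure the various inverses and compositions are taken on neighbourhoods where $G_s$, $G_t$ and $G_{s+t}$ are simultaneously univalent and their $\mathcal R$-transforms simultaneously analytic, and reduce $\delta,\gamma$ enough to make all manipulations rigorous.
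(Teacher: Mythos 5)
Your proposal is correct and follows essentially the same route as the paper: both rest on the linearity of the amalgamated $\mathcal R$-transform under freeness with amalgamation over $\Mk$ (which the paper attributes to Voiculescu), deduce the subordination property by the same formal manipulation $G_t^{(-1)}(G_{s+t}(\Lambda))=\Lambda-\mathcal R_s(G_{s+t}(\Lambda))$ followed by composing with $G_t$ on a suitably shrunk domain, and identify $\mathcal R_s$ in the semicircular case with the covariance map $\Lambda\mapsto\sum_j a_j\Lambda a_j$ (the paper cites Lehner for this, while you derive it from the standard fact that the operator-valued $\mathcal R$-transform of an $\Mk$-valued semicircular element equals its covariance). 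The only content you add beyond the paper is the explicit verification of freeness with amalgamation, which the paper leaves implicit in its citations; your sketch of that verification is sound.
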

\begin{proof} The linearity property has been shown by Voiculescu in \cite{VOI3} and the $\mathcal R$-transform of $s$ has been computed by Lehner in \cite{LEH}. We deduce easily the subordination property since by Equation (\ref{LinP}): there exists $ \gamma>0$ such that for all $\Lambda \in V_\gamma$,
		$$ G^{(-1)}_t(\Lambda) = G_{s+t}^{(-1)}(\Lambda) -\mathcal R_s(\Lambda).$$
Then there exists a $\delta>0$ such that, with $G_{s+t}(\Lambda)$ instead of $\Lambda$ in the previous equality,
		$$G^{(-1)}_t\big ( G_{s+t}(\Lambda) \ \big ) = \Lambda -\mathcal R_s\big ( G_{s+t}(\Lambda)  \ \big).$$
We compose by $G^{(-1)}_t$ to obtain the result.
\end{proof}

\noindent The subordination property plays a key role in our problem: it describes $G_{s+t}$ as a fixed point of a simple function involving $s$ and $t$ separately. Such a fixed point is unique and stable under some perturbation, as it is stated in Proposition \ref{UniqSol} below. Remark first that, for $\mathcal R_s$ given by (\ref{Rs}), for any $\Lambda$ in $\Mk^+$ and $M$ in $\Mk^-$,
\begin{equation}\label{FalseL1}
	\mathrm{Im } \ \big ( \Lambda - \mathcal R_s(M) \ \big ) = \mathrm{Im} \ \Lambda - \sum_{j=1}^p a_j \ \mathrm{Im} \ M \ a_j>0
\end{equation}
and 
\begin{equation}\label{FalseL2}
	\Big \| \Big ( \mathrm{Im } \ \big ( \Lambda - \mathcal R_s(M) \ \big ) \ \Big )^{-1} \Big \| \leq \| \ ( \mathrm{Im} \ \Lambda)^{-1} \|.
\end{equation}
In particular, by analytic continuation, the subordination property holds actually for any $\Lambda\in \Mk^+$ when $\mathbf x$ is a free semicircular system.

\begin{Prop}\label{UniqSol} Let $s$ and $t$ be as in Proposition \ref{PropRtr}, with $\mathbf x$ a free semicircular system.
\begin{enumerate}
\item {\bf Uniqueness of the fixed point:} For all $\Lambda \in \Mk^+$ such that
		$$\left \| ( \mathrm{Im } \ \Lambda)^{-1}  \right \| < \sqrt{ \sum_{j=1}^p \|a_j \|^2},$$
the following equation in $G_\Lambda\in \Mk^-$,
\begin{equation}
	G_\Lambda = G_t\Big ( \Lambda -\mathcal R_s ( \ G_\Lambda\ ) \Big ),
\end{equation}
admits a unique solution $G_\Lambda$ in $M_k(\mathbb C)^-$ given by $G_\Lambda = G_{s+t}(\Lambda)$.

\item {\bf Stability under analytic perturbations:} Let $G :Ê \Omega \rightarrow \Mk^-$ be an analytic function on a simply connected open subset $\Omega\subset \Mk^+$ containing matrices $\Lambda$ such that $  \| ( \mathrm{Im } \ \Lambda)^{-1}   \|$ is arbitrary small. Suppose that $G$ satisfies: for all $\Lambda \in \Omega$,
\begin{equation}\label{PropStabAnal}
		G(\Lambda) = G_t\Big (\Lambda-\mathcal R_s \big ( G(\Lambda) \ \big) \ \Big ) +\Theta(\Lambda),
\end{equation}
where the function $\Theta:\Omega \rightarrow \Mk$ is analytic and satisfies: there exists $\varepsilon>0$ such that for all $\Lambda$ in $\Omega$,
		$$\kappa(\Lambda):= \|\Theta(\Lambda)\| \ \| (\mathrm{Im} \ \Lambda)^{-1}\| \ \sum_{j=1}^p \| a_j\|^2<1-\varepsilon.$$
Then one has: $\forall \Lambda \in \Omega$
\begin{equation}
		\| G(\Lambda) - G_{s+t}(\Lambda)\| \leq \big ( 1+ c \ \| (\mathrm{Im }\ \Lambda)^{-1}\|^2 \ \big) \ \| \Theta(\Lambda)\|,
\end{equation}
where $c = \frac 1 \varepsilon  \sum_{j=1}^p \|a_j\|^2$.
\end{enumerate}
\end{Prop}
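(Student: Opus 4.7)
The plan is to treat both items as consequences of a single Lipschitz estimate for the map $F_\Lambda : \Mk^- \to \Mk^-$ defined by $F_\Lambda(M) := G_t\bigl(\Lambda - \mathcal R_s(M)\bigr)$. Inequality (\ref{FalseL1}) together with the fact that $G_t$ sends $\Mk^+$ to $\Mk^-$ ensures $F_\Lambda$ is a self-map of $\Mk^-$ for every $\Lambda \in \Mk^+$, and by the subordination property of Proposition \ref{PropRtr} (extended by analytic continuation to all of $\Mk^+$, as observed just after (\ref{FalseL2})) the element $G_{s+t}(\Lambda)$ is a fixed point of $F_\Lambda$. Both parts then reduce to comparing other solutions in $\Mk^-$ to this canonical one.

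To obtain the Lipschitz estimate I would apply the resolvent identity inside the $\mathcal C^*$-algebra $\MkA$: for $M_1, M_2 \in \Mk^-$ and $Z_i := \Lambda - \mathcal R_s(M_i) \in \Mk^+$,
\[
F_\Lambda(M_1) - F_\Lambda(M_2) = \trace\!\Bigl[(Z_1 \Un - t)^{-1}\, \mathcal R_s(M_1 - M_2)\Un\, (Z_2 \Un - t)^{-1}\Bigr].
\]
Bounding each resolvent in $\MkA$ by $\|(\mathrm{Im}\,Z_i)^{-1}\| \leq \|(\mathrm{Im}\,\Lambda)^{-1}\|$ (Lemma \ref{ImLem} combined with (\ref{FalseL2})), using the contractivity of the partial trace on $\MkA$, and the elementary bound $\|\mathcal R_s(H)\| \leq \bigl(\sum_j \|a_j\|^2\bigr)\|H\|$, I obtain
\[
\|F_\Lambda(M_1) - F_\Lambda(M_2)\| \leq \rho(\Lambda)\, \|M_1 - M_2\|, \qquad \rho(\Lambda) := \|(\mathrm{Im}\,\Lambda)^{-1}\|^2 \sum_{j=1}^p \|a_j\|^2.
\]

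Part 1 is then immediate: the stated hypothesis on $\|(\mathrm{Im}\,\Lambda)^{-1}\|$ makes $\rho(\Lambda) < 1$, so two fixed points $M_1, M_2 \in \Mk^-$ would satisfy $\|M_1 - M_2\| \leq \rho(\Lambda)\|M_1 - M_2\|$ and hence coincide, forcing $M_1 = M_2 = G_{s+t}(\Lambda)$. For Part 2, setting $f := G - G_{s+t}$ and subtracting the exact identity $G_{s+t} = F_\Lambda(G_{s+t})$ from the perturbed one $G = F_\Lambda(G) + \Theta$, one applies the Lipschitz estimate pointwise (valid since $G(\Lambda), G_{s+t}(\Lambda) \in \Mk^-$) to obtain
\[
\|f(\Lambda)\| \leq \rho(\Lambda)\, \|f(\Lambda)\| + \|\Theta(\Lambda)\|.
\]
Where $\rho(\Lambda) \leq 1 - \varepsilon$, rearrangement yields $\|f\| \leq \|\Theta\|/\varepsilon$, and the target bound $\|f\| \leq \bigl(1 + c\|(\mathrm{Im}\,\Lambda)^{-1}\|^2\bigr)\|\Theta\|$ with $c = \bigl(\sum_j \|a_j\|^2\bigr)/\varepsilon$ follows from the elementary expansion $1/(1-\rho) \leq 1 + \rho/\varepsilon$.

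The main obstacle is reconciling the contractive region $\rho(\Lambda) \leq 1-\varepsilon$ with the hypothesis phrased in terms of $\kappa(\Lambda) = \|\Theta(\Lambda)\|\,\|(\mathrm{Im}\,\Lambda)^{-1}\|\,\sum_j \|a_j\|^2$: whenever the assumption on $\kappa$ does not directly control $\rho$, one must propagate the bound by analytic continuation inside the simply connected open set $\Omega$, starting from matrices $\Lambda$ with $\|(\mathrm{Im}\,\Lambda)^{-1}\|$ so small that the contraction argument applies trivially (such matrices exist in $\Omega$ by hypothesis) and exploiting the joint analyticity of $G$, $G_{s+t}$ and $\Theta$ together with a continuity/connectedness argument across $\Omega$. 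This bookkeeping around $\kappa$ versus $\rho$ is the only subtle point in an otherwise mechanical argument.
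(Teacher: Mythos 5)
Your Part 1 is correct and is exactly the paper's argument: the resolvent identity gives the Lipschitz constant $\rho(\Lambda)=\|(\mathrm{Im}\,\Lambda)^{-1}\|^{2}\sum_j\|a_j\|^2$ for $\Phi_\Lambda$, and Picard's theorem plus the subordination property identifies the unique fixed point with $G_{s+t}(\Lambda)$.

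Part 2, however, has a genuine gap, and it sits precisely at the point you flag as ``the only subtle point.'' Your direct subtraction yields $\|f(\Lambda)\|\le\rho(\Lambda)\|f(\Lambda)\|+\|\Theta(\Lambda)\|$, which can be rearranged only where $\rho(\Lambda)<1$. But the hypothesis of the proposition controls $\kappa(\Lambda)=\|\Theta(\Lambda)\|\,\|(\mathrm{Im}\,\Lambda)^{-1}\|\sum_j\|a_j\|^2$, not $\rho(\Lambda)$, and in the intended application $\Lambda$ ranges over a region where $\|(\mathrm{Im}\,\Lambda)^{-1}\|$ grows like $N^\eta$, so $\rho(\Lambda)\gg 1$ there; smallness of $\kappa$ comes from $\|\Theta_N\|=O(N^{-2})$, which your inequality never exploits. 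The proposed fix --- ``propagate the bound by analytic continuation'' from the subregion where $\rho<1$ --- is not a valid argument: an inequality between operator norms is not an analytic object and cannot be continued. What \emph{can} be continued is an identity between analytic functions, and supplying one is the actual content of the paper's proof: one sets $\tilde G(\Lambda)=G(\Lambda)-\Theta(\Lambda)$ and $\tilde\Lambda=\Lambda-\mathcal R_s(\Theta(\Lambda))$, checks that $\tilde G(\Lambda)=G_t\bigl(\tilde\Lambda-\mathcal R_s(\tilde G(\Lambda))\bigr)$, i.e.\ $\tilde G(\Lambda)$ solves the \emph{exact} fixed-point equation at the shifted argument $\tilde\Lambda$, invokes Part 1 to conclude $\tilde G(\Lambda)=G_{s+t}(\tilde\Lambda)$ on the small-$\|(\mathrm{Im}\,\Lambda)^{-1}\|$ part of $\Omega$, and extends this \emph{identity} to all of $\Omega$ by the identity theorem. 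The hypothesis $\kappa(\Lambda)<1-\varepsilon$ is then used exactly once, through a Neumann series, to get $\|(\mathrm{Im}\,\tilde\Lambda)^{-1}\|\le\varepsilon^{-1}\|(\mathrm{Im}\,\Lambda)^{-1}\|$, after which the resolvent identity applied to $G_{s+t}(\tilde\Lambda)-G_{s+t}(\Lambda)$ delivers the stated bound with $c=\varepsilon^{-1}\sum_j\|a_j\|^2$. Without this change of variables your argument does not close.

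(As a side remark, the displayed hypothesis of Part 1 should read $\|(\mathrm{Im}\,\Lambda)^{-1}\|<1/\sqrt{\sum_j\|a_j\|^2}$ for the contraction to hold; you use it in that corrected sense, as does the paper.)
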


\begin{proof} {\it 1. Uniqueness of the fixed point:}
\\Fix $\Lambda\in $ M$_k(\mathbb C)^+$ such that
\begin{equation}\label{DomContr}
		\left \| ( \mathrm{Im } \ \Lambda)^{-1}  \right \| < \sqrt{ \sum_{j=1}^p \|a_j \|^2}.
\end{equation}
Denote  for any $M$ in $\Mk^-$ the matrix $\psi(M) =\Lambda  -\mathcal R_s(M)$, which is in $\Mk^+$ by (\ref{FalseL1}). We show that the function 
		$$\Phi_\Lambda: M \rightarrow G_t\big( \psi(M) \ \big )$$
is a contraction on $\Mk^-$. Remark that $\Phi_\Lambda$ maps $\Mk^-$ into $\Mk^-$. Moreover for all $M, \tilde M$ in $\Mk^-$,
\begin{eqnarray*}
		\lefteqn{	\| \Phi_\Lambda(M)-\Phi_\Lambda(\tilde M) \| } \\
		& & = \left \|(id_k \otimes \tau) \bigg [  \Big ( \psi(M)  \otimes \mathbf 1-t \Big )^{-1} - \Big (  \psi(\tilde M)   \otimes \mathbf 1-t \Big )^{-1} \bigg ] \right \| \\
		& & =\bigg \|(id_k \otimes \tau) \bigg [   \Big (  \psi(M)   \otimes \mathbf 1-t \Big )^{-1} \Big ( \sum_{j=1}^p a_j (M-\tilde M)a_j \Big)\UnN \Big ( \psi( \tilde M )  \otimes \mathbf 1-t \Big )^{-1} \bigg ] \bigg \| \\
		& & \leq \bigg \| \Big( {\mathrm{Im } \ \big(\psi(M) \otimes \mathbf 1-t \big )}\Big)^{-1} \bigg \|    \ \bigg \| \Big ( {\mathrm{Im } \ \big( \psi(\tilde M)\otimes \mathbf 1-t \big )} \Big ) ^{-1}\bigg \| \sum_{j=1}^p \| a_j \| ^2 \ \big \| M- \tilde M\Big \| \\
		& & \leq \left \| ({\mathrm{Im } \ \Lambda})^{-1} \right \| ^2 \sum_{j=1}^p \| a_j \| ^2 \ \| M- \tilde M \|.
\end{eqnarray*}
Hence the function $\Phi_{\Lambda}$ is a contraction and by Picard's theorem the fixed point equation $M=\Phi_\Lambda(M)$ admits a unique solution $M_\Lambda$ on the closed set of $k\times k$ matrices whose imaginary part is non positive semi-definite, which is necessarily $G_{s+t}$ by the subordination property.
\\
\\{\it 2. Stability under analytic perturbations:}
\\We set $\tilde G : \Omega \rightarrow \Mk^-$ given by: for all $\Lambda \in  \Omega $,
\begin{equation*}
		\tilde G(\Lambda) = G(\Lambda)-\Theta(\Lambda) = G_t\Big ( \Lambda - \mathcal R_s\big(G(\Lambda)\ \big) \ \Big).
\end{equation*}
We set $\tilde \Lambda :  \Omega \rightarrow \Mk$ given by: for all $\Lambda \in  \Omega $
\begin{equation*}
		\tilde \Lambda(\Lambda) =  \Lambda - \mathcal R_s (\Theta(\Lambda)) = \Lambda - \mathcal R_s\big (G(\Lambda) \ \big ) + \mathcal R_s\big (\tilde G(\Lambda) \ \big ).
\end{equation*}
In the following, we use $\tilde \Lambda$ as a shortcut for $\tilde \Lambda(\Lambda)$. One has $\tilde \Lambda - \mathcal R_s\big (\tilde G(\Lambda) \ \big ) =  \Lambda - \mathcal R_s\big (G(\Lambda) \ \big )$ which is in $\Mk^+$ by (\ref{FalseL1}). Hence we have: for all $\Lambda \in  \Omega $,
\begin{eqnarray}\label{ProofStabAn}
		\tilde G(\Lambda ) & = & G_t\Big ( \tilde \Lambda - \mathcal R_s\big ( \tilde G(\Lambda) \ \big ) \ \Big).
\end{eqnarray}
We want to estimate $\|(\mathrm {Im} \ \tilde \Lambda)^{-1}\|$ in terms of $ \|(\mathrm {Im} \ \Lambda)^{-1}\|$. For all $\Lambda$ in $\Omega$, we use the definition of $\tilde \Lambda$ and we write:
			$$ \mathrm {Im} \ \tilde \Lambda = \mathrm {Im} \ \Lambda \ \Big ( \mathbf 1_k - ( \mathrm {Im} \ \Lambda)^{-1}\mathcal R_s\big (\Theta (\Lambda) \ \big) \ \Big ).$$
Remark that $\|  ( \mathrm {Im} \ \Lambda)^{-1}\mathcal R_s\big (\Theta (\Lambda) \ \big) \|\leq \kappa(\Lambda) =  \|\Theta(\Lambda)\| \ \| (\mathrm{Im} \ \Lambda)^{-1}\| \ \sum_{j=1}^p \| a_j\|^2<1-\varepsilon$ by assumption. Then $ \mathrm {Im} \ \tilde \Lambda$ is invertible and one has
			$$ ( \mathrm {Im} \ \tilde \Lambda)^{-1} = \sum_{\ell\geq 0} \Big ( \ ( \mathrm {Im} \ \Lambda)^{-1}\mathcal R_s\big (\Theta (\Lambda) \ \big) \ \Big )^\ell \ ( \mathrm {Im} \ \Lambda)^{-1}.$$
We then obtain the following estimate
\begin{eqnarray*}
		\| ( \mathrm {Im} \ \tilde \Lambda)^{-1}\| & \leq  &  \Big \|\sum_{\ell\geq 0} \Big ( \ ( \mathrm {Im} \ \Lambda)^{-1}\mathcal R_s\big (\Theta (\Lambda) \ \big) \ \Big )^\ell \ ( \mathrm {Im} \ \Lambda)^{-1} \Big \| \\
				& \leq & \frac 1 {1-\kappa(\Lambda)} \|( \mathrm {Im} \ \Lambda)^{-1}\| < \frac 1 \varepsilon  \|( \mathrm {Im} \ \Lambda)^{-1}\| .
\end{eqnarray*}
By uniqueness of the fixed point and by (\ref{ProofStabAn}), for all $\Lambda\in \Omega$ such that $\| ($Im $\Lambda)^{-1}\| <\varepsilon\sqrt{ \sum_{j=1}^p \|a_j \|^2}$, one has $\tilde G(\Lambda) = G_{s+t}(\tilde \Lambda)$ (such matrices $\Lambda$ exist by assumption on $\Omega$). But the functions are analytic (in $k^2$ complex variables) so that the equality extends to $\Omega$. Then for all $\Lambda\in \Omega$,
\begin{eqnarray*}
	\| G(\Lambda)- G_{s+t} (\Lambda) \| & \leq &  \| G(\Lambda) -  \tilde G(\Lambda)\| + \| G_{s+t}(\tilde \Lambda)- G_{s+t}(\Lambda) \|.
\end{eqnarray*}
For the first term we have by definition of $\tilde G$ that $ \| G(\Lambda) -  \tilde G(\Lambda)\| \leq  \| \Theta(\Lambda) \|$. On the other hand, one has
\begin{eqnarray*}
\lefteqn{\| G_{s+t}(\Lambda) -   G_{s+t}(\tilde \Lambda) \|}\\
		 & = & \Big \| ( \textrm{id}_k \otimes \tau ) \big [ (\Lambda\otimes \mathbf 1  -s-t)^{-1} -  (\tilde \Lambda \otimes \mathbf 1  -s-t)^{-1}\ \big ]  \Big \|   \\
		&= &  \Big \|  ( \textrm{id}_k \otimes \tau ) \big [  (\Lambda\otimes \mathbf 1  -s-t)^{-1} ( \tilde  \Lambda \otimes \mathbf 1 - \Lambda\otimes \mathbf 1)  (\tilde \Lambda \otimes \mathbf 1  -s-t)^{-1}\ \big ]   \Big \| \ \\
		&\leq & \|  (\Lambda\otimes \mathbf 1  -s-t)^{-1} \| \ \|  \tilde \Lambda  - \Lambda\| \ \|   (\tilde \Lambda \otimes \mathbf 1 -s-t)^{-1}\| \\
		&\leq &\frac 1 \varepsilon \  \big  \|\mathcal R_s\big (\tilde G(\Lambda) \ \big ) -   \mathcal R_s\big (G(\Lambda) \ \big ) \ \big \| \ \| ( \mathrm{Im } \  \Lambda)^{-1} \|^2Ê\\
		& \leq &\frac 1 \varepsilon \  \sum_{j=1}^p \|a_j\|^2 \ \| ( \mathrm{Im } \  \Lambda)^{-1} \|^2 \ \| \Theta(\Lambda) \|.
\end{eqnarray*}
We then obtain as expected
\begin{equation*}
		\| G(\Lambda)- G_{s+t} (\Lambda) \|  \leq \Big ( 1 +  \frac 1 \varepsilon \  \sum_{j=1}^p \|a_j\|^2 \ \| ( \mathrm{Im } \  \Lambda)^{-1} \|^2\Big)  \ \| \Theta(\Lambda) \|.
\end{equation*}

\end{proof}

\section[Proof of Step 5]{Proof of Step 5: the asymptotic subordination property for random matrices}\label{PartSubordMA}
The purpose of this section is to prove Theorem \ref{SubMATh} below, where it is stated that, for $N$ fixed, the matrix-valued Stieltjes transforms of certain random matrices satisfy an asymptotic subordination property i.e. an equation as in (\ref{PropStabAnal}). This result is independent with the previous part and does not involve the language of free probability.
\\
\\Let $\mathbf X_N=(X_1\toN \etc X_p\toN)$ be a family of independent, normalized $N\times N$ matrices of the GUE and $\mathbf Y_N=(Y_1\toN \etc Y_q\toN)$ be a family of $N\times N$ random Hermitian matrices, independent of $\mathbf X_N$. We fix an integer $k\geq 1$ and Hermitian matrices $a_0, \ldots , a_p,b_1, \ldots , b_q\in \Mk$. We set $S_N$ and $T_N$ the $kN \times kN$ block matrices 
\begin{eqnarray*}
		 S_N  =  \sum_{j=1}^p a_j \otimes X_j^{(N)}, \ & &T_N =  \sum_{j=1}^q b_j \otimes Y_j^{(N)}.
\end{eqnarray*}
Define the $\Mk$-valued Stieltjes transforms of $S_N+T_N$ and $T_N$: for all $\Lambda \in \Mk^+=\big\{ \Lambda\in \Mk \ \big | \ \mathrm{Im} \ \Lambda>0 \big\}$,
\begin{eqnarray*}
		G_{S_N+T_N} (\Lambda)  & =& \esp \bigg [\traceN \Big[\big(\Lambda \otimes \mathbf 1_N  - S_N - T_N\big)^{-1}\Big] \ \bigg ],\\
		G_{T_N} (\Lambda)  & =& \esp \bigg[ \traceN \Big[\big(\Lambda \otimes \mathbf 1_N  - T_N\big)^{-1}\Big] \ \bigg] .
\end{eqnarray*}

\noindent We denote by $\mathcal R_s$ the functional
\begin{eqnarray*}
		\mathcal R_s :	& \Mk \rightarrow & \Mk\\
					& M \mapsto & \sum_{j=1}^p  a_j \ M \ a_j.
\end{eqnarray*}

\begin{Th}[Asymptotic subordination property]\label{SubMATh} Assume that there exists $\sigma\geq 1$ such that the joint law of the entries of the matrices $\mathbf Y_N$ satisfies a Poincaré inequality with constant $\sigma / N$, i.e. for any $f :\mathbb R^{2qN^2} \rightarrow \mathbb C$ function of the entries of $q$ matrices, of class $\mathcal C^1$ and such that $\esp \Big[  \  | f(\mathbf Y_{N} )| ^2  \ \Big] < \infty$, one has
\begin{equation}\label{HypConc}
		\var \Big( f(\mathbf Y_{N} ) \ \Big) \leq \frac {\sigma} N \ \esp\Big[ \| \nabla f(\mathbf Y_{N} ) \|^2 \Big],
\end{equation}
where $\nabla f$ denotes the gradient of $f$, $\var$ denotes the variance, $\var(\ x \ )=\esp\big [ \ \big| \ x- \esp[\ x \ ] \ \big |^2\big]$.
\\Then for any $\Lambda\in \Mk^+$, the Stieltjes transforms $G_{S_N+T_N}$ and $G_{T_N}$ satisfy
\begin{equation}\label{SYST}
		G_{S_N+T_N} (\Lambda)  = G_{T_N} \Big ( \Lambda   - \mathcal R_s\big (  G_{S_N+T_N} (\Lambda) \ \big) \ \Big ) +  \Theta_N(\Lambda) ,
\end{equation}
where $\Theta$ is analytic $\Mk^+\rightarrow \Mk$ and satisfies
\begin{eqnarray*}
		\left \| \Theta_N(\Lambda) \right \| & \leq & \frac {c } {N^2}  \left \| (\mathrm{Im } \ \Lambda)^{-1} \right\|^5,
\end{eqnarray*}
with $c =  {2k^{9/2}\sigma}\sum_{j=1}^p \|a_j\|^2       \Big ( \sum_{j=1}^p \left \| a_j \right \|+\sum_{j=1}^q \|b_j\|  \Big)^2$, $\| \cdot \|$ denoting the operator norm in M$_k(\mathbb C)$.
\end{Th}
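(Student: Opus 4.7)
The plan is to show that $G_{S_N+T_N}(\Lambda)$ satisfies an approximate version of the fixed-point equation (\ref{SYST}), using Gaussian integration by parts for the GUE entries and the Poincar\'e inequality to control the residual. Write $R_N := (\Lambda\otimes\mathbf 1_N - S_N - T_N)^{-1}$, $\phi := \mathrm{id}_k\otimes\tau_N$, $G_N := G_{S_N+T_N}(\Lambda) = \esp[\phi(R_N)]$, and set $\Lambda' := \Lambda - \mathcal R_s(G_N)$, $\tilde R_N := (\Lambda'\otimes\mathbf 1_N - T_N)^{-1}$. The key point is that $\tilde R_N$ is independent of $\mathbf X_N$ since $G_N$ is deterministic; by Lemma \ref{ImLem} together with (\ref{FalseL1})--(\ref{FalseL2}), one has $\|R_N\|,\|\tilde R_N\| \leq \|(\mathrm{Im}\,\Lambda)^{-1}\|$. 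Decomposing $\phi(R_N) = G_N + \delta$ with $\esp[\delta]=0$, the target is $\Theta_N(\Lambda) := G_N - G_{T_N}(\Lambda') = (\Lambda')^{-1}(E_1 + E_2)$ where $E_1, E_2$ are second-moment expressions in $\delta$.

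First, apply Gaussian IBP to the $\mathbf X_N$-entries in $\esp[\phi(S_N R_N)]$. Using $\partial R_N/\partial X^{(j)}_{\ell m} = R_N(a_j\otimes E_{\ell m})R_N$ together with the contraction $\sum_\beta (R_N)_{(\mu',\beta),(\alpha,\beta)} = N\phi(R_N)_{\mu'\alpha}$ (the factor $N$ balancing the $1/N^2$ from the GUE covariance), a direct index computation gives
\begin{equation*}
\esp[\phi(S_N R_N)] = \esp[\mathcal R_s(\phi(R_N))\,\phi(R_N)] = \mathcal R_s(G_N)\,G_N + E_1,\qquad E_1 := \esp[\mathcal R_s(\delta)\,\delta].
\end{equation*}
Combined with the resolvent identity $\Lambda G_N = \mathbf 1_k + \esp[\phi((S_N+T_N)R_N)]$ and subtracted from the exact identity $\Lambda' G_{T_N}(\Lambda') = \mathbf 1_k + \esp[\phi(T_N\tilde R_N)]$, this yields $\Lambda'\Theta_N(\Lambda) = \esp[\phi(T_N(R_N - \tilde R_N))] + E_1$. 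The resolvent expansion $R_N - \tilde R_N = R_N(S_N - \mathcal R_s(G_N)\otimes\mathbf 1_N)\tilde R_N$, combined with a \emph{second} Gaussian IBP applied to $\esp[\phi(T_N R_N S_N \tilde R_N)]$ (legitimate since $T_N \tilde R_N$ is independent of $\mathbf X_N$), reduces the remaining expectation via the same Wick-type pairing: the $\phi(R_N)$-factor produced by the contraction can be replaced by $G_N$ at the cost of a fluctuation residual $E_2$ of the same structural form as $E_1$, so that the leading terms cancel and one is left with $\Lambda'\Theta_N(\Lambda) = E_1 + E_2$.

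It remains to estimate $E_1$ and $E_2$ via the Poincar\'e inequality. Writing $(\phi(R_N))_{ab} = \frac{1}{N}\mathrm{Tr}_{kN}((E_{ba}\otimes\mathbf 1_N)R_N)$ and using $|\mathrm{Tr}(AB)|\leq\|A\|_F\,\|B\|_F$ together with $\|R_N - R_N'\|_F \leq \|(\mathrm{Im}\,\Lambda)^{-1}\|^2\,\|(S_N-S_N')+(T_N-T_N')\|_F$, each entry of $\phi(R_N)$ is Lipschitz in $(\mathbf X_N,\mathbf Y_N)$ with respect to the Frobenius norm with constant of order $\sqrt{k/N}\,\|(\mathrm{Im}\,\Lambda)^{-1}\|^2\,(\sum_j\|a_j\|+\sum_j\|b_j\|)$. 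Since the GUE has Poincar\'e constant $1/N$ and $\mathbf Y_N$ has Poincar\'e constant $\sigma/N\geq 1/N$, the joint law has constant at most $\sigma/N$, giving $\mathrm{Var}((\phi(R_N))_{ab}) = O(\sigma N^{-2}\|(\mathrm{Im}\,\Lambda)^{-1}\|^4)$ and thus $\esp[\|\delta\|^2] = O(\sigma k^3 N^{-2}\|(\mathrm{Im}\,\Lambda)^{-1}\|^4)$. Combining with the crude bounds $\|E_1\|,\|E_2\|\lesssim (\sum_j\|a_j\|^2)\,\esp[\|\delta\|^2]$ and $\|(\Lambda')^{-1}\|\leq\|(\mathrm{Im}\,\Lambda)^{-1}\|$ produces the claimed estimate $\|\Theta_N(\Lambda)\|\leq cN^{-2}\|(\mathrm{Im}\,\Lambda)^{-1}\|^5$. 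The main obstacle is the second Gaussian IBP: the unit-order Wick pairing must cancel \emph{exactly} against the corresponding $\mathcal R_s(G_N)$ contribution, so that only the fluctuation residual $E_2$ survives. A secondary subtlety is the Lipschitz analysis: using the Frobenius (rather than the operator) norm is essential, as otherwise Poincar\'e gives only $O(1/N)$ variance instead of the needed $O(1/N^2)$.
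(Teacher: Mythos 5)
Your overall architecture is the paper's: a Gaussian integration by parts (Schwinger--Dyson identity) producing an approximate fixed-point equation at $\Gamma=\Lambda-\mathcal R_s\big(G_{S_N+T_N}(\Lambda)\big)$, followed by a Poincar\'e--Cauchy--Schwarz argument showing the residual is $O(N^{-2})$. Your first IBP, the identity $\Lambda'\Theta_N(\Lambda)=E_1+E_2$, and the variance bound for the entries of $\traceN[R_N]$ are correct and match the paper's Propositions \ref{VarEstimate} and \ref{SDforMean}. The gap is in the estimate of $E_2$. After your second IBP one has
\begin{equation*}
E_2=\esp\Big[\traceN\big[\,T_N R_N\big(\mathcal R_s(\delta)\otimes\mathbf 1_N\big)\tilde R_N\,\big]\Big],
\end{equation*}
which is \emph{not} of the same structural form as $E_1=\esp[\mathcal R_s(\delta)\,\delta]$: it is a single centered $k\times k$ fluctuation sandwiched inside a random kernel. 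The claimed crude bound $\|E_2\|\lesssim(\sum_j\|a_j\|^2)\,\esp[\|\delta\|^2]$ does not follow; direct Cauchy--Schwarz gives only $\|E_2\|\lesssim(\esp\|\delta\|^2)^{1/2}=O(N^{-1})$, one factor of $N$ short. To recover $O(N^{-2})$ you must use $\esp[\mathcal R_s(\delta)]=0$ to replace the kernel $M\mapsto\traceN[T_NR_N(M\otimes\mathbf 1_N)\tilde R_N]$ by its centered version and then prove a \emph{second} Poincar\'e variance estimate, this time for the entries of the kernel itself --- this is exactly the paper's centering $l_{N,\Lambda,\Gamma}-L_{N,\Lambda,\Gamma}$ and the estimate of the functions $F^{(2)}_{u,v,u',v'}$, which your proposal never carries out (your Lipschitz analysis covers only the entries of $\traceN[R_N]$, i.e.\ the analogue of $F^{(1)}$).

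A second, related defect is the naked factor $T_N$ in $E_2$. The theorem assumes only a Poincar\'e inequality on $\mathbf Y_N$, so $\|T_N\|$ is not bounded, and neither the kernel nor its gradient can be controlled purely in terms of $\|(\mathrm{Im}\ \Lambda)^{-1}\|$; your constant $c$ would acquire a dependence on $\|\Lambda\|$ and on moments of $\|T_N\|$, which the stated estimate does not allow. The paper avoids this by rewriting $S_N=(\Lambda-\Gamma)\otimes\mathbf 1_N+(\Gamma\otimes\mathbf 1_N-T_N)-(\Lambda\otimes\mathbf 1_N-S_N-T_N)$ inside the Schwinger--Dyson identity, which collapses every term to products of resolvents; the resulting kernel is $M\mapsto\traceN[h_{T_N}(\Gamma)(M\otimes\mathbf 1_N)h_{S_N+T_N}(\Lambda)]$, with no free $S_N$ or $T_N$. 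You should restructure your second step along these lines before attempting the missing variance estimate.
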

\noindent The proof of Theorem \ref{SubMATh} is carried out in two steps.
\begin{itemize}
\item In Section \ref{MSDpart} we state a mean Schwinger-Dyson equation for random Stieltjes transforms (Proposition \ref{VarEstimate}).
\item In Section \ref{SDMpart} we deduce from Proposition \ref{VarEstimate} a Schwinger-Dyson equation for mean Stieltjes transforms (Proposition \ref{SDforMean}).
\end{itemize}
Theorem \ref{SubMATh} is a direct consequence of Proposition \ref{SDforMean} as it is shown in Section \ref{SecProofSubMA}.
%

%

\subsection{Mean Schwinger-Dyson equation for random Stieltjes transforms}\label{MSDpart}
\noindent For $\Lambda,\Gamma$ in $\Mk^+$, define the elements of $\MkMn$
\begin{eqnarray*}
		h_{S_N+T_N} (\Lambda) 	 & = & (\Lambda \otimes \mathbf 1_N- S_N - T_N)^{-1}, \\
		h_{T_N} (\Gamma)		 & = & (\Gamma\otimes \mathbf 1_N-T_N)^{-1},
\end{eqnarray*}
and  $H_{S_N+T_N} (\Lambda) = \traceN \Big [h_{S_N+T_N} (\Lambda)\Big ]$, $H_{T_N} (\Lambda)=\traceN \Big [ h_{T_N}(\Lambda)\Big ]$.
\begin{Prop}[Mean Schwinger-Dyson equation for random Stieltjes transforms]\label{VarEstimate}
For all $\Lambda,\Gamma \in \Mk^+$ we have
\begin{equation}\label{SD}
		 \esp \bigg[ H_{S_N+T_N}(\Lambda) -  H_{T_N}(\Gamma) - \traceN \Big [ h_{T_N}(\Gamma)  \Big ( \mathcal R_s\big ( H_{S_N+T_N}(\Lambda)\ \big ) -\Lambda +\Gamma \Big)\UnN \ h_{S_N+T_N}(\Lambda) \Big ] \bigg]  = 0.
\end{equation}
\end{Prop}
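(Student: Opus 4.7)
The plan is to combine a resolvent identity with Gaussian integration by parts against the entries of the GUE matrices $X_j^{(N)}$.

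First, the resolvent identity. From $h_{T_N}(\Gamma)^{-1} - h_{S_N+T_N}(\Lambda)^{-1} = (\Gamma-\Lambda)\UnN + S_N$, multiplying on the left by $h_{T_N}(\Gamma)$ and on the right by $h_{S_N+T_N}(\Lambda)$ gives
\begin{equation*}
h_{S_N+T_N}(\Lambda) - h_{T_N}(\Gamma) = h_{T_N}(\Gamma)\big((\Gamma-\Lambda)\UnN\big)\, h_{S_N+T_N}(\Lambda) + h_{T_N}(\Gamma)\, S_N\, h_{S_N+T_N}(\Lambda).
\end{equation*}
Taking $\traceN$ and then $\esp$, the first term on the right already accounts for the $-\Lambda+\Gamma$ contribution of \eqref{SD}, so what remains to prove is that the expectation of the second term equals
\begin{equation*}
\esp\!\Big[\traceN\!\Big[h_{T_N}(\Gamma)\,\big(\mathcal R_s(H_{S_N+T_N}(\Lambda))\UnN\big)\, h_{S_N+T_N}(\Lambda)\Big]\Big].
\end{equation*}

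Next, the Wick computation. Because $\mathbf Y_N$ and $\mathbf X_N$ are independent, $h_{T_N}(\Gamma)$ is a function of $\mathbf Y_N$ alone and has no dependence on the entries of any $X_j^{(N)}$. Writing $S_N=\sum_j a_j\otimes X_j^{(N)}$ and expanding entrywise in the $\MkMn$ basis, I would apply for each $j$ and each pair $(m,l)$ the GUE Wick formula $\esp\big[(X_j^{(N)})_{m,l}\, F\big] = \tfrac1N\,\esp\big[\partial_{(X_j^{(N)})_{l,m}} F\big]$ together with the resolvent differentiation
\begin{equation*}
\partial_{(X_j^{(N)})_{l,m}}\, h_{S_N+T_N}(\Lambda) = h_{S_N+T_N}(\Lambda)\,(a_j\otimes E_{l,m})\, h_{S_N+T_N}(\Lambda),
\end{equation*}
where $E_{l,m}\in\MN$ denotes the standard matrix unit. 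The key bookkeeping step is that after differentiation one of the internal matrix indices (say $l$) becomes free, and its summation produces $\sum_l (h_{S_N+T_N}(\Lambda))_{(\beta,l),(\mu,l)} = N\,H_{S_N+T_N}(\Lambda)_{\beta\mu}$; the factor $N$ cancels the $1/N$ from Wick's formula and, crucially, the partial trace $H_{S_N+T_N}(\Lambda)$ emerges in the $\Mk$ factor. Reassembling the Kronecker structure, the $j$-th contribution rewrites as $\esp\big[\traceN[h_{T_N}(\Gamma)\,(a_j H_{S_N+T_N}(\Lambda) a_j \UnN)\, h_{S_N+T_N}(\Lambda)]\big]$, and summing over $j$ with $\mathcal R_s(M)=\sum_j a_j M a_j$ closes the identity.

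Conceptually there is no real obstacle: the whole argument is a direct Wick expansion enabled by the independence of $\mathbf X_N$ and $\mathbf Y_N$. The only point requiring care is the index bookkeeping in the $\MkMn$ block structure, specifically making sure that the matrix unit $E_{l,m}$ is attached to the $\MN$ factor of $a_j\otimes E_{l,m}$, and that the surviving internal contraction collapses to the partial trace $H_{S_N+T_N}(\Lambda)$ rather than to some other combination of matrix entries.
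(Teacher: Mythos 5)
Your proposal is correct and follows essentially the same route as the paper: the paper first establishes $\esp\big[\traceN[h_{T_N}(\Gamma)\,S_N\,h_{S_N+T_N}(\Lambda)]\big]=\esp\big[\traceN[h_{T_N}(\Gamma)(\mathcal R_s(H_{S_N+T_N}(\Lambda))\UnN)h_{S_N+T_N}(\Lambda)]\big]$ via the GUE integration-by-parts formula and the contraction identity $\frac1N\sum_{m,n}(\mathbf 1_k\otimes\epsilon_{m,n})M(\mathbf 1_k\otimes\epsilon_{n,m})=\traceN[M]\otimes\mathbf 1_N$, and then substitutes $S_N=(\Lambda-\Gamma)\UnN+(\Gamma\UnN-T_N)-(\Lambda\UnN-S_N-T_N)$, which is exactly your resolvent identity read in the other direction. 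The only difference is the order in which the two ingredients are presented.
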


\noindent The result is a consequence of integration by parts for Gaussian densities and of the formula for the differentiation of the inverse of a matrix. If $(g_1 \etc g_N)$ are independent identically distributed centered real Gaussian variables with variance $\sigma^2$ and $F: \mathbb R^N \rightarrow \mathbb C$ a differentiable map such that $F$ and its partial derivatives are polynomially bounded, one has for $i=1, \ldots ,  N$
		$$ \esp \Big[\ g_i \ F(g_1\etc g_N) \ \Big] = \sigma^2 \esp \bigg[ \frac{ \partial F}{\partial x_i}(g_1\etc g_N) \ \bigg].$$
This induces an analogue formula for independent matrices of the GUE, called the Schwinger-Dyson equation, where the Hermitian symmetry of the matrices plays a key role. For instance, if $P$ is a monomial in $p$ non commutative indeterminates, one has for $i=1, \ldots ,  p$,
\begin{equation*}
		 \esp \bigg[  \tau_N \Big[  \ X_i^{(N)} \ P(\mathbf X_N) \ \Big] \ \bigg]
				= \sum_{P=Lx_iR} \esp \bigg[ \tau_N\Big[L(\mathbf X_N)\ \Big] \ \tau_N\Big[R(\mathbf X_N) \ \Big] \bigg],
\end{equation*}
the sum over all decompositions $P=Lx_iR$ for $L$ and $R$ monomials being viewed as the partial derivative.
\\
\\This formula has an analogue for analytical maps instead of polynomials. The case of the function $\mathbf X_N \mapsto (\Lambda\UnN -S_N)^{-1}$ is investigated in details in \cite[Formula (3.9)]{HT}, our proof is obtained by minor modifications.

\begin{proof} Denote by $(\epsilon_{m,n})_{m,n=1, \ldots ,  N}$ the canonical basis of $\MN$. By \cite[Formula (3.9)]{HT} with minor modification, we get the following: for all $\Lambda, \Gamma$ in $\Mk^+$ and $j=1\etc p$,
\begin{eqnarray*}
		\lefteqn{ \esp \Big [ (\mathbf 1_k \otimes X_j^{(N)}) ( \Lambda \otimes \mathbf 1_N-S_N-T_N)^{-1} \ \Big | \ T_N \Big] }\\
			& &= \esp \Big[  \frac 1 N  \sum_{m,n=1}^N(\mathbf 1_k\otimes \epsilon_{m,n})  ( \Lambda \UnN -S_N-T_N)^{-1} (a_j\otimes \epsilon_{n,m}) ( \Lambda \otimes \mathbf 1_N -S_N-T_N)^{-1} \ \Big | \ T_N \Big].
\end{eqnarray*}
In these equations, $\esp [ \cdot | T_N]$ stands for the conditional expectation with respect to $T_N$. Furthermore, for any $M$ in $\Mk\otimes \MN$, one has
\begin{eqnarray*}
	\frac 1 N  \sum_{m,n=1}^N (\mathbf 1_k\otimes \epsilon_{m,n}) \ M \ (\mathbf 1_k\otimes \epsilon_{n,m})  = \traceN [ \ M \ ]\otimes \mathbf 1_N.
\end{eqnarray*}
Indeed the formula is clear if $M$ is of the form $M=\tilde M \otimes \epsilon_{u,v}$ and extends by linearity. In particular, with $M=( \Lambda \otimes \mathbf 1_N-S_N-T_N)^{-1}(a_j\otimes \mathbf 1_N)$, we obtain that: for all $\Lambda, \Gamma$ in $\Mk^+$ and $j=1\etc p $,
\begin{eqnarray*}
		\lefteqn{ \esp \Big [ (a_j \otimes X_j^{(N)}) ( \Lambda \otimes \mathbf 1_N-S_N-T_N)^{-1} \ \Big | \ T_N \Big] }\\
			& & = \esp \bigg [  (a_j\otimes \mathbf 1_N) \bigg ( \traceN\Big[ ( \Lambda \otimes \mathbf 1_N-S_N-T_N)^{-1} \Big]a_j\otimes \mathbf 1_N \bigg)( \Lambda \otimes \mathbf 1_N-S_N-T_N)^{-1} \ \bigg | \ T_N \bigg]\\
			& & =  \esp \Big [ \big( a_j H_{S_N+T_N} a_j  \otimes \mathbf 1_N \big) \ h_{S_N+T_N} \ \Big | \ T_N \Big].
\end{eqnarray*}
Recall that $S_N = \sum_{j=1}^p a_j\otimes X_j^{(N)}$ and $\mathcal R_s : M \mapsto \sum_{j=1}^p a_jMa_j$, so that for all $\Lambda, \Gamma$ in $\Mk^+$, one has
\begin{eqnarray}
			\lefteqn{ \esp \Big [ (\Gamma \UnN - T_N)^{-1} \ S_N \  ( \Lambda \UnN -S_N-T_N)^{-1}  \Big ] } \nonumber \\
			& & =  \esp \Big [ \ (\Gamma \UnN - T_N)^{-1}  \ \sum_{j=1}^p \esp \Big [ (a_j \otimes X_j^{(N)}) \  ( \Lambda \UnN -S_N-T_N)^{-1} \ \Big | \ T_N \Big ] \  \Big ]\nonumber \\
			& & = \esp \Big [ \ h_{T_N} (\Gamma)  \ \esp \Big [  \big(\sum_{j=1}^p a_j H_{S_N+T_N}(\Lambda) a_j  \otimes \mathbf 1_N \big) \ h_{S_N+T_N}(\Lambda) \ \Big | \ T_N \Big]\nonumber \\
%
%
%
%
			& & = \esp \bigg [ h_{T_N}(\Gamma)\ \Big( \mathcal R_s \big ( H_{S_N+T_N}(\Lambda) \ \big ) \UnN \Big ) \   h_{S_N+T_N}(\Lambda) \bigg ].\label{DemScwD}
\end{eqnarray}
We take the partial trace in Equation (\ref{DemScwD}) to obtain:
\begin{eqnarray}	\lefteqn{  \esp \bigg [\traceN\Big [ h_{T_N}(\Gamma) \ S_N \ h_{S_N+T_N}(\Lambda)  \Big ] \bigg ]} \nonumber \\
	& = &	\esp \bigg [ \traceN \Big [h_{T_N}(\Gamma)\ \Big( \mathcal R_s \big ( H_{S_N+T_N}(\Lambda) \ \big) \UnN \Big )\   h_{S_N+T_N}(\Lambda) \Big ]\bigg ]. \label{proofMSD}
\end{eqnarray}
We now rewrite $S_N$ as follow:
\begin{eqnarray*}
	S_N    =  (\Lambda  - \Gamma) \otimes \mathbf 1_N  + (\Gamma \otimes \mathbf 1_N -T_N) - (\Lambda \otimes \mathbf 1_N -S_N - T_N ).
\end{eqnarray*}
Re-injecting this expression in the left hand side of Equation (\ref{proofMSD}), one gets Equation (\ref{SD}):
\begin{eqnarray*}
	\lefteqn{ \esp \bigg [ \traceN \Big [h_{T_N}(\Gamma)\ \Big(  \mathcal R_s \big ( H_{S_N+T_N}(\Lambda) \ \big )  \UnN \Big )\   h_{S_N+T_N}(\Lambda) \Big ]\bigg ] }\nonumber\\
		  & = & \esp \bigg [\traceN\Big [ h_{T_N} (\Gamma) \ (\Lambda   - \Gamma)\UnN \ h_{S_N+T_N} (\Lambda) + h_{S_N+T_N}(\Lambda)    - h_{T_N}(\Gamma) \Big ] \bigg ] \\
		  & = & \esp \bigg [\traceN\Big [ \ h_{T_N}(\Gamma) \ \Big( (\Lambda   - \Gamma)\UnN\Big) \  h_{S_N+T_N}(\Lambda)  \Big ]+ H_{S_N+T_N}(\Lambda) \ - \ H_{T_N}(\Gamma) \bigg ].
\end{eqnarray*}
\end{proof}

\subsection {Schwinger-Dyson equation for mean Stieltjes transforms}\label{SDMpart}
\noindent We use the concentration properties of the law of $(\mathbf X_N, \mathbf Y_N)$ to get from Equation (\ref{SD}) a relation between $G_{S_N+T_N}$ and $G_{T_N}$. We define the centered version of $H_{S_N+T_N}$ by: for all $\Lambda$ in $\Mk^+$,
\begin{eqnarray}\label{Def1Var}
		K_{S_N+T_N}(\Lambda)  & = & H_{S_N+T_N}(\Lambda) - G_{S_N+T_N} (\Lambda), \textrm{ in }\Mk.
\end{eqnarray}
We introduce the random linear map 
\begin{eqnarray}
		\begin{array}{cccc} l_{N,\Lambda,\Gamma}: & \textrm{M}_k(\mathbb C) \otimes \textrm{M}_N(\mathbb C)&   \rightarrow &  \textrm{M}_k(\mathbb C) \otimes \textrm{M}_N(\mathbb C)\\
		&  M& \mapsto & h_{T_N}(\Gamma) \ M \ h_{S_N+T_N}(\Lambda) \end{array}\label{Def2Var}
\end{eqnarray}
and its mean
\begin{equation}\label{Def3Var}
		 L_{N,\Lambda,\Gamma}: M \mapsto \esp\Big[l_{N,\Lambda,\Gamma}(M)\ \Big].
\end{equation}
Remark that if $M$ is a random matrix, then $L_{N,\Lambda,\Gamma}(M) = \esp\big [  h_{\tilde T_N}(\Gamma) \ M \ h_{\tilde S_N+\tilde T_N}(\Lambda)  \big | M\big ]$, where $(\tilde S_N+\tilde T_N)$ is an independent copy of $(S_N+T_N)$ independent of $M$.

\begin{Prop}[Schwinger-Dyson equation for mean Stieltjes transforms]\label{SDforMean} For all $\Lambda,  \Gamma$ in $\Mk^+$, one has\begin{equation}\label{SDin}
		 G_{S_N+T_N} (\Lambda) -  G_{T_N}(\Gamma) - \traceN \bigg [ L_{N,\Lambda,\Gamma} \bigg( \ \Big(  \mathcal R_sÊ\big (  G_{S_N+T_N} (\Lambda) \ \big ) -\Lambda+\Gamma  \Big) \UnN \ \bigg) \  \bigg]    = \Theta_N(\Lambda, \Gamma),
\end{equation}
where 
\begin{equation}
	\Theta_N(\Lambda, \Gamma) = \esp \bigg[ \traceN \Big [ \ \left( l_ {N,\Lambda,\Gamma} -   L_{N,\Lambda,\Gamma} \right) \Big ( \mathcal R_s \big ( K_{S_N+T_N}(\Lambda) \ \big )  \UnN \Big) \ \Big ] \bigg]
\end{equation}
is controlled in operator norm by the following estimate:
\begin{equation}\label{Theta}
		\left \| \Theta_N(\Lambda, \Gamma) \right \| \leq \frac {c} {N^2}  \left \| (\mathrm{Im } \ \Gamma)^{-1} \right\| \left \| (\mathrm{Im } \ \Lambda)^{-1} \right\|^3 \ \Big (  \| (\mathrm{Im } \ \Gamma)^{-1}\| + \| (\mathrm{Im } \ \Lambda)^{-1}\| \Big ),
\end{equation}
with $c=   {k^{9/2}\sigma} \sum_{j=1}^p \|a_j\| ^2      \Big ( \sum_{j=1}^p \left \| a_j \right \|+\sum_{j=1}^q \|b_j\|  \Big)^2$.
\end{Prop}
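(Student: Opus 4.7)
Identity (\ref{SDin}) follows from (\ref{SD}) by a direct substitution. Write $H_{S_N+T_N}(\Lambda) = G_{S_N+T_N}(\Lambda) + K_{S_N+T_N}(\Lambda)$; by linearity of $\mathcal R_s$, the quantity $\mathcal R_s(H_{S_N+T_N}(\Lambda)) - \Lambda + \Gamma$ appearing in (\ref{SD}) splits as the deterministic piece $\mathcal R_s(G_{S_N+T_N}(\Lambda)) - \Lambda + \Gamma$ plus the centered random piece $\mathcal R_s(K_{S_N+T_N}(\Lambda))$. The deterministic piece produces, after taking expectation, exactly $\traceN[L_{N,\Lambda,\Gamma}((\mathcal R_s(G_{S_N+T_N}(\Lambda)) - \Lambda + \Gamma)\UnN)]$ by the very definition of $L_{N,\Lambda,\Gamma}$ as the expectation of $l_{N,\Lambda,\Gamma}$. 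For the random piece, the fact that $L_{N,\Lambda,\Gamma}$ is deterministic and linear together with $\esp[K_{S_N+T_N}(\Lambda)] = 0$ gives $\esp[\traceN[L_{N,\Lambda,\Gamma}(\mathcal R_s(K_{S_N+T_N}(\Lambda))\UnN)]] = 0$; subtracting this vanishing quantity converts the contribution of the random piece into $\Theta_N(\Lambda,\Gamma)$ as defined in the statement.

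The main work is the bound (\ref{Theta}). Test $\Theta_N(\Lambda,\Gamma)$ against an arbitrary $A \in \Mk$ with $\|A\| \le 1$. The scalar $\tau_k(A \cdot \Theta_N(\Lambda,\Gamma))$ can be rewritten as $\esp_Z[\phi(Z,K(Z))] - \esp_Z\esp_W[\phi(W,K(Z))]$, where $Z = (\mathbf X_N, \mathbf Y_N)$, $W$ is an independent copy, and $\phi(Z,M) = \ttrN((A\UnN)\,h_{T_N}(\Gamma)(Z)\,(\mathcal R_s(M)\UnN)\,h_{S_N+T_N}(\Lambda)(Z))$ is bilinear in $M$ and in the two resolvents. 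This is a covariance-type expression between the centered random functional $Z \mapsto \phi(Z,M_0)$ and the centered matrix $K_{S_N+T_N}(\Lambda)$, bounded by Cauchy--Schwarz in terms of the product of their standard deviations. The joint law of $(\mathbf X_N, \mathbf Y_N)$ satisfies a Poincar\'e inequality with constant $\sigma/N$ (combining Gaussian Poincar\'e with constant $1/N$ for the GUE with assumption (\ref{HypConc}) for $\mathbf Y_N$, using $\sigma\ge 1$). Because both $\phi(\cdot,M_0)$ and the entries of $K_{S_N+T_N}(\Lambda)$ are normalized partial traces, each gradient in the entries of $Z$ carries an additional $1/N$ factor through $\partial \tau_N(f(X))/\partial X_{ij} = (1/N)(f'(X))_{ji}$; combined with the $1/N$ of Poincar\'e, each standard deviation is $O(1/N)$, and Cauchy--Schwarz delivers the $1/N^2$ prefactor.

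The hardest part will be the careful tracking of derivative norms that produces the five-fold product of imaginary parts and the combinatorial coefficient in (\ref{Theta}). Each Duhamel differentiation of a resolvent $h$ produces a sandwich $h \cdot (a_j \text{ or } b_j \otimes E_{mn}) \cdot h$, contributing one coefficient norm and one extra resolvent factor, each of which is bounded in operator norm by Lemma~\ref{ImLem}. Summing the squared contributions over the $p+q$ coordinate directions of $\mathbf X_N, \mathbf Y_N$ produces the combinatorial factor $(\sum_j\|a_j\|+\sum_j\|b_j\|)^2$, while tallying which of $h_{T_N}(\Gamma)$ or $h_{S_N+T_N}(\Lambda)$ is differentiated in each direction and accumulating the ambient resolvents already present in $\phi$ reproduces the asymmetric combination $\|(\mathrm{Im } \ \Gamma)^{-1}\|\,\|(\mathrm{Im } \ \Lambda)^{-1}\|^3\,(\|(\mathrm{Im } \ \Gamma)^{-1}\|+\|(\mathrm{Im } \ \Lambda)^{-1}\|)$. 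The additional constant $k^{9/2}$ arises from passing between operator norm on $\Mk$ and the Hilbert--Schmidt norms implicit in applying Cauchy--Schwarz entrywise over the $k^2$-dimensional matrix space.
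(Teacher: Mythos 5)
Your proposal is correct and follows essentially the same route as the paper: the identity is obtained by substituting $H_{S_N+T_N}=G_{S_N+T_N}+K_{S_N+T_N}$ into the mean Schwinger--Dyson equation and using that $L_{N,\Lambda,\Gamma}$ applied to the centered piece has zero expectation, and the bound comes from Cauchy--Schwarz on the resulting covariance, with each standard deviation controlled by the Poincar\'e inequality (constant $\sigma/N$) combined with the extra $1/N$ gain from differentiating normalized traces of resolvents, and Lemma~\ref{ImLem} supplying the powers of $\|(\mathrm{Im}\,\Lambda)^{-1}\|$ and $\|(\mathrm{Im}\,\Gamma)^{-1}\|$. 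The only cosmetic difference is that you access $\|\Theta_N\|$ by duality against unit-norm test matrices while the paper works entrywise via the infinity norm and the norm-comparison inequalities, which is where its explicit $k^{9/2}$ comes from.
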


\begin{proof}[Proof of Proposition \ref{SDforMean}]
\noindent  We first expand $\Theta_N(\Lambda, \Gamma) $: for all $\Lambda, \Gamma$ in $\Mk^+$, we have
\begin{eqnarray*}
		 \Theta_N(\Lambda, \Gamma) &:= & \esp \Bigg[ \traceN \bigg [ \ \left( l_ {N,\Lambda,\Gamma} -   L_{N,\Lambda,\Gamma} \right) \Big ( \mathcal R_s \big(H_{S_N+T_N}(\Lambda)-G_{S_N+T_N} (\Lambda) \ \big) \UnN \Big) \ \bigg ] \Bigg]\\
		 & = &\esp \Bigg[ \traceN \bigg [  l_ {N,\Lambda,\Gamma}  \Big ( \mathcal R_s\big ( H_{S_N+T_N}(\Lambda) \ \big ) \UnN \Big) \ \bigg ] \Bigg]\\
		 &    & - \traceN \bigg [  L_ {N,\Lambda,\Gamma}  \Big( \mathcal R_s \big (  G_{S_N+T_N} (\Lambda) \ \big )  \UnN \Big) \ \bigg ].
\end{eqnarray*}
By Equation (\ref{SD}), we get the following:
\begin{eqnarray*}
\lefteqn{\esp \Bigg[ \traceN \bigg [ l_{N,\Lambda,\Gamma}  \Big ( \mathcal R_s \big ( H_{S_N+T_N} (\Lambda)\ \big )  \UnN \Big)  \bigg ] \Bigg] }\\
 & = &  \esp \Bigg[ \traceN \bigg [ l_{N,\Lambda,\Gamma} \Big( \ \left ( \Lambda -\Gamma \right)\UnN  \Big) \ \bigg ] - H_{T_N}(\Gamma) + H_{S_N+T_N}(\Lambda) \Bigg] \\
 & = &  \traceN \bigg [ L_{N,\Lambda,\Gamma} \Big( \ \left ( \Lambda -\Gamma \right)\UnN  \Big) \ \bigg ] - G_{T_N}(\Gamma) + G_{S_N+T_N}(\Lambda),
\end{eqnarray*}
which gives Equation (\ref{SDin}).
\\
\\We use the Poincaré inequality to control the operator norm of $\Theta_N$: if $(g_1 \etc g_K)$ are independent identically distributed centered real Gaussian variables with variance $v^2$ and $F$ is a differentiable map $\mathbb R^K \rightarrow \mathbb C$ such that $F$ and its partial derivatives are polynomially bounded, then (see \cite[Theorem 2.1]{CN})
		$$\var \Big( F(g_1 \etc g_K ) \ \Big ) \leq v^2 \esp \Big[ \ \| \nabla F(g_1\etc g_K) \ \|^2 \ \Big].$$
The Poincaré inequality is compatible with tensor product and then such a formula is still valid when $F$ is a function of the matrices $\bf X_N$ and $\bf Y_N$ with $v^2=\frac \sigma N$.
\\
\\We will often deal with matrices of size $k \times k$. Since the integer $k$ is fixed, we can use intensively the equivalence of norms, the constants appearing will not modify the order of convergence. For any integer $K$, we denote the Euclidean norm of a $K\times K$ matrix $A=(a_{m,n})_{1\leq m,n \leq K}$ by 
		$$\| A \|_e = \sqrt{ \sum_{m,n=1}^K |a_{m,n}|^2},$$
and its infinity norm by
		$$\| A \|_\infty = \underset{m,n=1, \ldots ,  K}{\max} \ |a_{m,n}|.$$ 
Recall that if $A,B$ are  $K\times K$ matrices we have the following inequalities
\begin{eqnarray}
		\| A \|   \leq  \|A\|_e \leq \sqrt K \| A \| , \label{CompNorm1}\\
		\| A \|   \leq  \sqrt K \| A \|_\infty  \leq \sqrt K \| A \|_e,  \label{CompNorm12}\\
		\| A B\|   \leq  \|A\|_e \  \|B\|.		\label{CompNorm2}
\end{eqnarray}
When $A$ is in M$_k(\mathbb C) \ \otimes$ M$_N(\mathbb C)$, its Euclidean norm is defined by considering $A$ as a $kN\times kN$ matrix. In the following we will write an element $Z$ of $\MkMn$ 
\begin{eqnarray}\label{NotationOmega}
		Z & = & \sum_{m,n=1}^N \sum_{u,v=1}^k Z^{m,n}_{u,v} \ \epsilon_{u,v}\otimes \epsilon_{m,n} = \sum_{m,n=1}^N Z^{(m,n)}\otimes \epsilon_{m,n} =  \sum_{u,v=1}^k  \epsilon_{u,v}\otimes Z_{(u,v)},
\end{eqnarray}
where for $m,n=1, \ldots ,  N$ and $u,v=1, \ldots ,  k$, $Z^{m,n}_{u,v}$ is a complex number, $Z^{(m,n)}$ is a $k\times k$ matrix, and $Z_{(u,v)}$ is a $N\times N$ matrix; we use the same notation for the canonical bases of $\Mk$ and $\MN$.
\\We fix $\Lambda,\Gamma$ in $\Mk^+$ until the end of this proof and we use for convenience the following notations:
\begin{eqnarray*}
		M_N & = & \mathcal R_s \big ( K_{S_N+T_N}(\Lambda) \ \big )\\
		h_N^{(1)} & = & h_{S_N+T_N}(\Lambda)\\
		h_N^{(2)} & = & h_{T_N}(\Gamma)\\
		l_N & = & l_{N,\Lambda,\Gamma}\\
		L_N & = & L_{N,\Lambda,\Gamma}.
\end{eqnarray*}	
We consider $(\tilde h_N^{(1)},\tilde h_N^{(2)})$ an independent copy of $( h_N^{(1)}, h_N^{(2)})$, independent of $\mathbf X_N$ and $\mathbf Y_N$ (and hence of all the random variables considered). Recall that by definitions (\ref{Def2Var}) and (\ref{Def3Var}): for all $\Lambda, \Gamma$ in $\Mk^+$, we have
\begin{eqnarray*}
		 l_{N}&:  A\in \textrm M_k(\mathbb C)  \  \mapsto \ &h_N^{(2)} \ A \ h_N^{(1)}\in \textrm M_k(\mathbb C) ,\\
		 L_{N}&: A \in \textrm M_k(\mathbb C) \ \mapsto \ & \esp\Big[l_{N}(A)\ \Big]\in \textrm M_k(\mathbb C) .
\end{eqnarray*}
With the notations of (\ref{NotationOmega}) we have
\begin{eqnarray*}
\lefteqn{  \traceN \Big [ ( l_{N}- L_{N} ) \ ( M_N\UnN) \Big ] }\\
	& = & \traceN \big [ h_N^{(2)}  \ (M_N\UnN) \ h_N^{(1)}  \big ] - \esp \Big[ \traceN \big [\tilde h_N^{(2)}  \ (M_N\UnN) \ \tilde h_N^{(1)} \big ] \ \Big | \ M_N \Big ]\\
	& = & \frac 1 N \sum_{m,n=1}^N \bigg [  \Big(h_N^{(2)}   \Big)^{(m,n)} M_N \ \Big(h_N^{(1)}  \Big)^{(n,m)}  -  \esp \Big [  \Big(\tilde h_N^{(2)}  \Big)^{(m,n)} M_N \ \Big(\tilde h_N^{(1)}  \Big)^{(n,m)} \  \Big | \ M_N \Big ] \ \bigg ].
\end{eqnarray*}

\noindent To estimate the operator norm of $\Theta_N$ we use the domination by the infinity norm (\ref{CompNorm12}) in order to split the contributions due to $M_N$ and due to $ l_{N }- L_{N } $: we get
\begin{eqnarray*}
  \lefteqn{  \| \Theta_N(\Lambda,\Gamma) \| =  \bigg \| \esp  \Big [ \traceN \left [ ( l_{N }- L_{N } ) \ ( M_N\UnN) \right ] \Big] \bigg \|}\nonumber\\
  	&  \leq &  \sqrt k  \Bigg \| \esp  \Bigg [ \frac 1 N \sum_{m,n=1}^N   \Big(h_N^{(2)}  \Big)^{(m,n)} M_N \  \Big(h_N^{(1)}   \Big)^{(n,m)}
     -  \esp \Big [  \Big(\tilde h_N^{(2)}  \Big)^{(m,n)} M_N \  \Big(\tilde h_N^{(1)}  \Big)^{(n,m)} \  \Big | \ M_N \Big ] \Bigg ] \Bigg \|_\infty \nonumber\\
      &  \leq &  k^{5/2} \underset{\substack{1\leq u,v \leq   k \\ 1\leq u',v' \leq   k } } {\max} \Bigg | \esp  \Bigg [ (M_N)_{u',v'} \times \frac 1 N\sum_{m,n=1}^N  \Big(h_N^{(2)}  \Big)^{m,n}_{u,u'} \Big(h_N^{(1)}   \Big)^{n,m}_{v',v} - \esp \Big [  \Big( h_N^{(2)}  \Big)^{m,n}_{u,u'} \Big(h_N^{(1)}  \Big)^{n,m}_{v',v} \Big ]  \Bigg ] \Bigg | \nonumber\\
      &  \leq & k^{5/2} \underset{\substack{  u,v , u',v'  } } {\max} \esp  \Bigg [ | (M_N)_{u',v'} | \times  \bigg | \tau_N\Big [       \big (      h_N^{(1,2)}\big ) _{\substack{u,v\\ u',v' }}       \big]- \esp \Big [  \tau_N \Big[        \big( h_N^{(1,2)} \big )_{\substack{u,v\\ u',v' }}          \Big]\Big ]  \bigg | \Bigg ]  \\
      &  \leq & k^{5/2} \underset{\substack{  u,v  ,  u',v'   } } {\max} \esp  \Bigg [ | (M_N)_{u',v'} | \times  \bigg | \tau_N\Big [       \big (      k_N^{(1,2)}\big )_{\substack{u,v\\ u',v' }}       \Big]  \bigg | \Bigg ],\label{EstmTheta1}
\end{eqnarray*}
where we have denoted the $N\times N$ matrices 
\begin{eqnarray*}
		\big (h_N^{(1,2)}\big )_{\substack{u,v\\ u',v' }} & = & \big( h_N^{(2)}  \big)_{(u,u')} \big(h_N^{(1)}  \big)_{(v',v)}, \\
		\big (k_N^{(1,2)} \big)_{\substack{u,v\\ u',v' }} & = & \big (h_N^{(1,2)} \big) _{\substack{u,v\\ u',v' }} -\esp\Big[ \big ( h_N^{(1,2)} \big)_{\substack{u,v\\ u',v' }}  \Big].
\end{eqnarray*}
Remark that by (\ref{CompNorm2}), for $u',v'=1, \ldots ,  k$,
\begin{eqnarray*}
	|(M_N)_{u',v'}| & = & \Big |    \Big(\sum_{j=1}^p a_j K_{S_N+T_N}(\Lambda) a_j\Big)_{u',v'} \Big | \leq \Big \|    \sum_{j=1}^p a_j K_{S_N+T_N}(\Lambda) a_j \Big \|_e  \leq    \sum_{j=1}^p   \|  a_j \|^2 \ \|K_{S_N+T_N}(\Lambda) \|_e.\label{EstmTheta2}
\end{eqnarray*}
Then by Cauchy-Schwarz inequality we get:
\begin{eqnarray}
 \| \Theta_N(\Lambda,\Gamma) \|    & \leq & k^{5/2} \sum_{j=1}^p \| a_j\|^2 \Big (        \esp \big [  \|K_{S_N+T_N}(\Lambda) \|_e^2 \big ]                  \ \underset{\substack{u,v,u',v' } } {\max}         \esp \Big [         \Big | \tau_N \big[       \big(   k_N^{(1,2)}\big)_{\substack{u,v\\ u',v' }}       \big] \ \Big |^2     \   \Big ] \ \Big )^{1/2} \nonumber \\
   & \leq & k^{5/2} \sum_{j=1}^p \| a_j\|^2 \bigg (     \sum_{u,v=1}^k     \var    \big(H_{S_N+T_N}(\Lambda)\big) _{u,v}   \ \underset{\substack{u,v,u',v' } } {\max} \var \Big (  \tau_N\big[ \big( h_N^{(1,2)} \big)_{\substack{u,v\\ u',v' }} \big] \ \Big ) \ \bigg )^{1/2}.\ \ \ \label{EstmTheta3}
\end{eqnarray}

\noindent One is reduced to the study of variances of random variables. To use the Poincaré inequality, we write for $u,v,u',v'=1, \ldots ,  k$,
\begin{eqnarray*}
		\big(H_{S_N+T_N}(\Lambda)\ \big)_{u,v} & = & F^{(1)}_{u,v}\big(\mathbf X_N,\mathbf Y_N \big),\\
		\tau_N \Big[ \big(h_N^{(1,2)} \big)_{\substack{u,v\\ u',v' }} \Big] & = & F^{(2)}_{u,v,u',v'}\big(\mathbf X_N,\mathbf Y_N \big ),
\end{eqnarray*}
where for all selfadjoint matrices $\mathbf A=(A_1, \dots ,A_p)$ in $\MN$, for all $\mathbf B=(B_1 \etc B_q)$ in $\MN$  and with $\tilde S_N= \sum_{j=1}^p a_j\otimes A_j$, $\tilde T_N= \sum_{j=1}^q b_j\otimes B_j $, we have set

\begin{eqnarray*}
		\lefteqn{F^{(1)}_{u,v} (\mathbf A , \mathbf B )  =  \Big ( \traceN \big[(\Lambda\UnN- \tilde S_N-\tilde T_N)^{-1}\big] \ \Big)_{u,v}} \nonumber\\
				& = & \frac 1 N (\Tr_k \otimes \Tr_N) \big [ (\epsilon_{v,u} \otimes \mathbf 1_N) ( \Lambda\UnN - \tilde S_N-\tilde T_N  )^{-1}\big],\label{EstmTheta4}
\end{eqnarray*}
\begin{eqnarray*}
		\lefteqn{F^{(2)}_{u,v,u',v'} (\mathbf A , \mathbf B )  =   \tau_N \bigg[ \Big( (\Lambda\UnN  -  \tilde S_N-\tilde T_N)^{-1} \Big)_{(u,u')} \ \Big( (\Gamma\UnN-\tilde T_N)^{-1} \Big)_{(v',v)} \bigg]} \nonumber \\
				& = & \frac 1 N (\Tr_k \otimes \Tr_N) \Big [ (\epsilon_{v,u} \otimes \mathbf 1_N) (\Gamma\UnN- \tilde T_N)^{-1}  (\epsilon_{u',v'}\UnN ) \  ( \Lambda\UnN  -\tilde S_N-\tilde T_N )^{-1}\Big ].\label{EstmTheta5}
\end{eqnarray*}
The functions and their partial derivatives are bounded (see \cite[Lemma 4.6]{HT} with minor modifications), so that, since the law of $(\mathbf X_N, \mathbf Y_N)$ satisfies a Poincaré inequality with constant $\frac \sigma N$, one has
\begin{eqnarray*}
		 \var \ \Big(H_{S_N+T_N}(\Lambda)\ \Big)_{u,v}& \leq & \frac \sigma N \esp \Big [ \big \| \nabla \ F^{(1)}_{u,v} (\mathbf X_N, \mathbf Y_N )    \big \|^2 \Big ],\\
		 \var \Big( \tau_N\big [       \big(      h_N^{(1,2)}\big)_{\substack{u,v\\ u',v' }}       \big] \ \Big) & \leq & \frac \sigma N \esp \Big [ \big \| \nabla \ F^{(2)}_{u,v,u',v'} (\mathbf X_N, \mathbf Y_N  )   \big \|^2 \Big].
\end{eqnarray*}
We define the set $\mathcal W$ of families $( \mathbf V,\mathbf W )$ of $N\times N$ Hermitian matrices, with $\mathbf V= (V_1\etc V_p)$, $\mathbf W=(W_1\etc W_q )$, of unit Euclidean norm in $\mathbf R^{(p+q)N^2}$. Then we have
\begin{eqnarray*}
		 \var   \Big(H_{S_N+T_N}(\Lambda)\ \Big)_{u,v} & \leq & \frac \sigma N \esp \bigg [ \underset{(\mathbf V,\mathbf W)\in \mathcal W} {\max} \Big | \frac d{dt}_{|t=0} \ F^{(1)}_{u,v} ( \mathbf X_N + t \mathbf V, \mathbf Y_N + t \mathbf W   )    \Big |^2 \bigg ],\\
		 \var \Big(  \tau_N\big [      \big(       h_N^{(1,2)}\big)_{\substack{u,v\\ u',v' }}       \big]  \ \Big) & \leq & \frac \sigma N \esp \bigg [\underset{(\mathbf V,\mathbf W)\in \mathcal W} {\max} \Big | \frac d{dt}_{|t=0} \ F^{(2)}_{u,v,u',v'} (  \mathbf X_N + t \mathbf V, \mathbf Y_N + t \mathbf W   )    \Big |^2 \bigg ].
\end{eqnarray*}
For all $ ( \mathbf V, \mathbf W )$ in $\mathcal W$, for all  selfadjoint $N \times N$ matrices $\mathbf A= (A_1\etc A_1)$, $\mathbf B=  (B_1\etc B_1 )$:
\begin{eqnarray*}
		\lefteqn{
	 \bigg | \frac d{dt}_{|t=0} \ F^{(1)}_{u,v}  ( \mathbf A  + t \mathbf V  , \mathbf B  + t \mathbf W  )    \bigg |^2} \\
	  & = & \Bigg | \frac d{dt}_{|t=0} \  \frac 1 N (\Tr_k \otimes \Tr_N) \bigg[ (\epsilon_{v,u} \otimes \mathbf 1_N) \Big ( \Lambda\UnN  - \sum_{j=1}^p a_j\otimes (A_j +t V_j ) -  \ \sum_{j=1}^q b_j\otimes (B_j  +t W_j )  \Big )^{-1}  \bigg ]  \Bigg|^2 \\
		 & = & \bigg | \frac 1 N (\Tr_k \otimes \Tr_N) \Big [ (\epsilon_{v,u} \otimes \mathbf 1_N)  ( \Lambda\UnN - \tilde S_N - \tilde T_N    )^{-1} \\
	  & & \ \ \ \  \ \ \ \times  \ \Big ( \sum_{j=1}^p a_j\otimes V_j + \sum_{j=1}^q b_j\otimes W_j   \Big )   ( \Lambda\UnN  - \tilde S_N - \tilde T_N    )^{-1} \Big ]  \bigg |^2.
\end{eqnarray*}
The Cauchy-Schwarz inequality for $\Tr_k \otimes \Tr_N$ (i.e. for $\Tr_{kN}$) gives
\begin{eqnarray*}
	 \left | \frac d{dt}_{|t=0} \ F^{(1)}_{u,v}  ( \mathbf A  + t \mathbf V  , \mathbf B  + t \mathbf W   )    \right |^2 & \leq & \frac 1 {N^2} \Big \| (\epsilon_{v,u} \otimes \mathbf 1_N)   ( \Lambda\UnN  - \tilde S_N - \tilde T_N    )^{-1}  \Big\|_e^2\\
	  & &  \ \ \ \times  \   \left \| \bigg( \sum_{j=1}^p a_j\otimes V_j  + \sum_{j=1}^q b_j\otimes W_j      \bigg)   ( \Lambda\UnN - \tilde S_N - \tilde  T_N    )^{-1}   \right \|_e^2.
\end{eqnarray*}
Using (\ref{CompNorm2}) to split Euclidean norms into the product of an operator norm and an Euclidean norm, we get:
\begin{eqnarray*}
	  	  	\lefteqn{
	 \left | \frac d{dt}_{|t=0} \ F^{(1)}_{u,v}  ( \mathbf A  + t \mathbf V , \mathbf B  + t \mathbf W   )    \right |^2} \nonumber \\
	  & \leq & \frac 1 {N^2}  \| \epsilon_{v,u} \otimes \mathbf 1_N \|_e^2 \    \|   ( \Lambda\UnN  - \tilde S_N - \tilde T_N    )^{-1}   \|^2 \bigg \| \sum_{j=1}^p a_j\otimes V_j  + \sum_{j=1}^q b_j\otimes W_j    \bigg \|^2_e  \nonumber\\
	  & \leq & \frac k {N} \   \| (\mathrm{Im } \ \Lambda)^{-1}  \|^4 \ \bigg \| \sum_{j=1}^p a_j\otimes V_j  + \sum_{j=1}^q b_j\otimes W_j   \bigg \|^2_e . 
\end{eqnarray*}
Remark that, since $ ( \mathbf V , \mathbf W ) \in \mathcal W $, the norm of the matrices $V_j$ and $W_j$ is bounded by one. Then we have the following:
\begin{eqnarray*}
	    \bigg \| \sum_{j=1}^p a_j\otimes V_j + \sum_{j=1}^q  b_j\otimes W_j + b_j^*\otimes W_j^*  \bigg \|_e  \leq                        \sum_{j=1}^p \| a_j\|_e   +2 \sum_{j=1}^q  \|b_j \|_e    \leq \sqrt{ k }\Big(\sum_{j=1}^p \| a_j\| +  \sum_{j=1}^q  \|b_j \|\Big).
\end{eqnarray*}
Hence we finally obtain an estimate of $\var  (H_{S_N+T_N}(\Lambda)\ )_{u,v} )$:
\begin{equation}
		 \var \Big(H_{S_N+T_N}(\Lambda)\ \Big)_{u,v}  \leq \frac {k^2\sigma} {N^2}  \Big ( \sum_{j=1}^p \left \| a_j \right \|+\sum_{j=1}^q \|b_j\|  \Big)^2 \ \| (\mathrm{Im } \ \Lambda)^{-1} \|^4. \label{EstmTheta9}
\end{equation}
We obtain a similar estimate for $ \var \Big(  \tau_N\big [        \big(     h_N^{(1,2)}\big)_{\substack{u,v\\ u',v' }}       \big]  \ \Big)$. The partial derivative of $F^{(2)}_{u,v,u',v'}$ gives two terms: $\forall ( \mathbf V, \mathbf W ) \in \mathcal W$, $\forall (\mathbf A,\mathbf B)\in \MN^{p+q}$
\begin{eqnarray*}
		\lefteqn{
	\frac d{dt}_{|t=0} \ F^{(2)}_{u,v,u',v'}( \mathbf A + t \mathbf V ,\mathbf B  + t \mathbf W  )    }\\
	   & = &   \frac 1 N (\Tr_k \otimes \Tr_N) \Big [ (\epsilon_{v,u} \otimes \mathbf 1_N) (\Gamma \UnN -\tilde T_N)^{-1}  \Big (  \sum_{j=1}^q b_j\otimes W_j    \Big )\\
	  	 & &  \ \ \ \ \  \ \ \ \ \ \times \ (\Gamma \UnN -\tilde T_N)^{-1} (\epsilon_{u',v'}\UnN)   ( \Lambda\UnN  - \tilde S_N - \tilde T_N    )^{-1} \\
	  	& &  \ \ \ \ \  + \ (\epsilon_{v,u} \otimes \mathbf 1_N) (\Gamma \UnN -\tilde T_N)^{-1} (\epsilon_{u',v'}\UnN)   ( \Lambda\UnN  - \tilde S_N - \tilde T_N    )^{-1}\\
	  	 &  &   \ \ \ \ \  \ \ \ \ \ \times \  \Big ( \sum_{j=1}^p a_j\otimes V_j^{(N)} + \sum_{j=1}^q b_j\otimes W_j^{(N)}  \Big )   ( \Lambda\UnN  - \tilde S_N - \tilde T_N    )^{-1}   \Big ].
\end{eqnarray*}
We then get the following:
\begin{eqnarray*}
		\lefteqn{
	 \left | \frac d{dt}_{|t=0} \ F^{(2)}_{u,v,u',v'}( \mathbf A  + t \mathbf V ,\mathbf B  + t \mathbf W  )    \right |^2} \\
	   & \leq &  \frac {k^2} {N} \Big ( \sum_{j=1}^p \left \| a_j \right \|+\sum_{j=1}^q \|b_j\|  \Big)^2  \ \| (\mathrm{Im } \ \Gamma)^{-1}\|^2 \ \| (\mathrm{Im } \ \Lambda)^{-1}\|^2 \Big ( \| (\mathrm{Im } \ \Lambda)^{-1}\| + \| (\mathrm{Im } \ \Gamma)^{-1}\| \Big)^2.
\end{eqnarray*}
Hence we have
\begin{eqnarray}
	\lefteqn{ \var \Big(    \tau_N\big [      \big (       h_N^{(1,2)} \big)_{\substack{u,v\\ u',v' }}       \big]   \ \Big)}\nonumber\\
	& \leq& \frac {k^2\sigma} {N^2} \Big ( \sum_{j=1}^p \left \| a_j \right \|+\sum_{j=1}^q \|b_j\|  \Big)^2   \| (\mathrm{Im } \ \Gamma)^{-1}\|^2 \ \| (\mathrm{Im } \ \Lambda)^{-1}\|^2 \ \Big (  \| (\mathrm{Im } \ \Gamma)^{-1}\| + \| (\mathrm{Im } \ \Lambda)^{-1}\| \Big)^2.\label{EstmTheta10}
\end{eqnarray}
We then obtain as desired, by (\ref{EstmTheta3}), (\ref{EstmTheta9}) and  (\ref{EstmTheta10}):
\begin{eqnarray*}
		 \left \| \Theta_N(\Lambda, \Gamma) \right \|&  \leq &  k^{5/2} \sum_{j=1}^p \| a_j\|^2 \bigg (     \sum_{u,v=1}^k     \var    \big(H_{S_N+T_N}(\Lambda)\big) _{u,v}   \ \underset{\substack{u,v,u',v' } } {\max} \var \Big (  \tau_N\big[ \big( h_N^{(1,2)} \big)_{\substack{u,v\\ u',v' }} \big] \ \Big ) \ \bigg )^{1/2}\\
		  & \leq & \frac{ c} { N^{2}}     \left \| (\mathrm{Im } \ \Gamma)^{-1} \right\| \left \| (\mathrm{Im } \ \Lambda)^{-1} \right\|^3 \ \Big (  \| (\mathrm{Im } \ \Gamma)^{-1}\| + \| (\mathrm{Im } \ \Lambda)^{-1}\| \Big ), 
\end{eqnarray*}
where $c=  {k^{9/2}\sigma} \sum_{j=1}^p \|a_j\|^2       \Big ( \sum_{j=1}^p \left \| a_j \right \|+\sum_{j=1}^q \|b_j\|  \Big)^2 . $

\end{proof}

\subsection{Proof of Theorem 5.1}\label{SecProofSubMA}

By (\ref{FalseL1}), for all $\Lambda$ in $\Mk^+$, the matrix $\Lambda -   \mathcal R_sÊ\big (  G_{S_N+T_N} (\Lambda) \ \big )$ is in $\Mk^+$ and then it makes sense to choose $\Gamma = \Lambda -   \mathcal R_sÊ\big (  G_{S_N+T_N} (\Lambda) \ \big )$ in Equation (\ref{SDin}). We obtain for all $\Lambda $ in $\Mk^+$,
		$$G_{S_N+T_N} (\Lambda) =  G_{T_N} \Big ( \Lambda - \mathcal R_s \big ( G_{S_N+T_N}(\Lambda) \ \big ) \ \Big) + \Theta_N(\Lambda),$$
where $\Theta_N(\Lambda) = \Theta_N\Big(\Lambda,  \Lambda -   \mathcal R_sÊ\big (  G_{S_N+T_N} (\Lambda) \ \big ) \ \Big)$ is analytic in $k^2$ complex variables. Recall that by (\ref{FalseL2}), we have $\big \|  \big (  \Lambda -   \mathcal R_sÊ\big (  G_{S_N+T_N} (\Lambda) \ \big )\ \big)^{-1} \big\| \leq \|(\Lambda)^{-1}\|$, which gives (when replacing $c$ in (\ref{Theta}) by $c/2$) the expected estimate of $\Theta_N(\Lambda)$.

\section{Proof of Estimate (3.9)}
Let $(\mathbf X_N, \mathbf Y_N, \mathbf x, \mathbf y)$ be as in Section \ref{PartSheme}. We assume that $\big (\mathbf x, \mathbf y, (\mathbf Y_N)_{N\geq 1} \big )$ are realized in a same $\mathcal C^*$-probability space $(\mathcal A, .^*, \tau, \| \cdot \|)$ with faithful trace, where
\begin{itemize}
\item the families $\mathbf x$, $\mathbf y$, $\mathbf Y_1$, $\mathbf Y_2, \dots, \mathbf Y_N, \dots$ are free,
\item for any polynomials $P$ in $q$ non commutative indeterminates $\tau[P(\mathbf Y_N)]:=\tau_N[P(\mathbf Y_N)]$.
\end{itemize}
Consider $L$ a degree one selfadjoint polynomial with coefficients in $\Mk$. Define the Stieltjes transform of $L_N= L(\mathbf X_{N}, \mathbf Y_{N} )$ and $\ell_N= L(\mathbf x, \mathbf Y_N )$: for all $\lambda \in \mathbb C^+=\big \{ z \in \mathbb C \big | \ \mathrm{Im} \ z>0\big\}$,
\begin{eqnarray}
		g_{L_N} ( \lambda) & = & \esp \bigg [ \ttrN \Big[   \big (\lambda \unun_N -  L_N \ \big)^{-1} \  \Big] \bigg ],\\
		g_{\ell_N}( \lambda) & = &   \ttr \Big[  \big ( \lambda \unun-  \ell_N \ \big)^{-1} \  \Big]  .
\end{eqnarray}
One can always write $L_N=a_0\otimes \mathbf 1_N +S_N+T_N$, $\ell_N=a_0\otimes \mathbf 1 +s +T_N$, where 
		$$S_N  =  \sum_{j=1}^p a_j \otimes X_j^{(N)}, \ s = \sum_{j=1}^p a_j \otimes x_j,  \ T_N =  \sum_{j=1}^q b_j \otimes Y_j^{(N)},$$
and $a_0\etc a_p,b_1 \etc b_q$ are Hermitian matrices in $\Mk$. Define the $\Mk$-valued Stieltjes transforms of $S_N+T_N$ and $s+T_N$: for all $\Lambda\in \Mk^+=\big\{ \Lambda\in \Mk \ \big | \ \mathrm{Im} \ \Lambda>0 \big\}$,
\begin{eqnarray*}
  		G_{S_N+T_N}(\Lambda)   & =& \esp \bigg[ \traceN \Big[\big(\Lambda \otimes \mathbf 1_N - S_N  - T_N\big)^{-1}\Big] \bigg ],\\
  		G_{s+T_N}(\Lambda)   & =& \trace \Big[\big(\Lambda \otimes \mathbf 1 - s  - T_N\big)^{-1}\Big].
\end{eqnarray*}
Then one has: for all $\lambda$ in $\mathbb C^+$
		$$g_{L_N}(\lambda) = \tau_k \Big [ \ G_{S_N+T_N}(\lambda \mathbf 1_k - a_0) \ \Big], \ g_{\ell_N}(\lambda) = \tau_k \Big [ \ G_{s+T_N}(\lambda \mathbf 1_k - a_0) \ \Big].$$
By Proposition \ref{PropRtr}, for any $\Lambda\in \Mk^+$, one has
		$$G_{s+T_N}(\Lambda) = G_{T_N} \Big ( \Lambda   - \mathcal R_s \big ( G_{s+T_N} (\Lambda) \ \big ) \ \Big ).$$
On the other hand, since the matrices of $\mathbf Y_N$ are deterministic, we can apply Theorem \ref{SubMATh} with $\sigma=1$
		$$G_{S_N+T_N} (\Lambda)  = G_{T_N} \Big ( \Lambda   - \mathcal R_s \big ( G_{S_N+T_N} (\Lambda) \ \big ) \ \Big ) + \Theta_N(\Lambda),$$
where $\left \| \Theta_N(\Lambda) \right \|   \leq   \frac {c} {N^2}  \left \| (\mathrm{Im } \ \Lambda)^{-1} \right\|^5$ for a constant $c>0$. Define 
		$$\Omega_\eta\toN=\Big \{ \Lambda \in \Mk^+ \Big | \ \|(\mathrm{Im} \ \Lambda)^{-1}\| < N^\eta\Big\}.$$
Then for $\eta<1/3$, there exists $N_0$ such that for all $N\geq N_0$ and for any $\Lambda$ in $\Omega_\eta\toN$, one has
\begin{eqnarray*}
	\kappa(\Lambda):=\|\Theta_N(\Lambda)\| \ \|(\mathrm{Im} \ \Lambda)^{-1}\| \ \sum_{j=1}^p \|a_j\|^2   \leq  \frac {c } {N^2}  \|(\mathrm{Im} \ \Lambda)^{-1}\|^6 \leq c N^{6\eta-2}\leq \frac 1 2.
\end{eqnarray*}
Then by Proposition \ref{UniqSol} with $(t,G,\Theta, \Omega, \varepsilon)=(T_N, G_{S_N+T_N}, \Theta_N, \Omega^{(N)}_\eta,1/2)$, one has
\begin{eqnarray*}
		\| G_{s+T_N}(\Lambda) - G_{S_N+T_N}(\Lambda)\| & \leq & \Big (1+2\sum_{j=1}^p \|a_j\|^2 \ \| (\mathrm{Im } \Lambda)^{-1}\|^2\Big) \ \| \Theta(\Lambda)\|\\
				& \leq &  c \Big (1+2\sum_{j=1}^p \|a_j\|^2 \ \| (\mathrm{Im } \ \Lambda)^{-1}\|^2\Big) \ \frac { \|  (\mathrm{Im } \ \Lambda)^{-1}\|^5}{N^2}.
\end{eqnarray*}
Hence for every $\varepsilon>0$, there exist $N_{0}$ and   $\gamma$ such that for all $N \geq N_{0}$, for all $\lambda$ in $\mathbb C$ such that $\varepsilon\leq (\mathrm{Im } \  \lambda)^{-1} \leq N^{\gamma}$, one has
\begin{equation}
		| g_{L_N}(\lambda) - g_{\ell_N}(\lambda) | \leq \| G_{s+T_N}(\lambda \mathbf 1_k - a_0) - G_{S_N+T_N}(\lambda \mathbf 1_k - a_0)\| \leq  \frac {c}{N^{2}} ( \mathrm{ Im} \  \lambda)^{-7},
\end{equation}
where $c$ denotes now the constant $c = {k^{9/2}} \sum_{j=1}^p \|a_j\|       \Big ( \sum_{j=1}^p \left \| a_j \right \|+\sum_{j=1}^q \|b_j\|  \Big)^2 \Big (\varepsilon^{-2}+2\sum_{j=1}^p \|a_j\|^2 \Big)$.

\section[Proof of Step 2]{Proof of Step 2: An intermediate inclusion of spectrum}\label{App12}
For a review on the theory of $\mathcal C^*$-algebras, we refer the readers to \cite{CON} and \cite{BO}. Notably, Appendix A of the second reference contains facts about ultrafilters and ultraproducts that are used in this section.
\\
\\Let $\big (\mathbf x, \mathbf y, (\mathbf Y_N)_{N\geq 1} \big )$ be as in Section \ref{PartSheme}. We assume that these non commutative random variables are realized in the same $\mathcal C^*$-probability space $(\mathcal A, .^*, \tau, \| \cdot \|)$ with faithful trace, where
\begin{itemize}
\item the families $\mathbf x$, $\mathbf y$, $\mathbf Y_1$, $\mathbf Y_2, \dots, \mathbf Y_N, \dots$ are free,
\item for any polynomials $P$ in $q$ non commutative indeterminates $\tau[P(\mathbf Y_N)]:=\tau_N[P(\mathbf Y_N)]$.
\end{itemize}

\noindent A consequence of Voiculescu's theorem and of Shlyakhtenko's Theorem \ref{ShlTh} in Appendix \ref{App1} is that for all polynomials $P$ in $p+ q$ non commutative indeterminates,
\begin{eqnarray}
		\tau [ P (\mathbf x , \mathbf Y_N  )  ] & \underset{N\rightarrow \infty }\longrightarrow & \tau[P(\mathbf x , \mathbf y  )  ],\label{CvOmega1}\\
		\| P (\mathbf x , \mathbf Y_N  ) \| &  \underset{N\rightarrow \infty }\longrightarrow &  \| P(\mathbf x , \mathbf y  ) \|.\label{CvOmega2}
\end{eqnarray}

\noindent In order to prove Step 2, it remains to show that (\ref{CvOmega2}) still holds when the polynomials $P$ are $\Mk$-valued. This fact is a folklore result in $\mathcal C^*$-algebra theory, we give a proof for readers convenience. We need first the two following lemmas.

\begin{Lem}\label{LemCs1} Let $\mathcal A$ and $\mathcal B$ be unital $\mathcal C^*$-algebra. Let $\pi : \mathcal A \rightarrow \mathcal B$ be a morphism of unital $^*$-algebra. Then $\pi$ is contractive.
\end{Lem}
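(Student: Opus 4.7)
The plan is to invoke the $\mathcal{C}^*$-identity $\|a\|^{2} = \|a^{*}a\|$, which holds in both $\mathcal{A}$ and $\mathcal{B}$, in order to reduce the statement to the case of self-adjoint elements. Since $\pi$ is a $*$-homomorphism, $\pi(a)^{*}\pi(a) = \pi(a^{*}a)$, so applying the identity in $\mathcal{B}$ gives $\|\pi(a)\|^{2} = \|\pi(a^{*}a)\|$. Hence it suffices to prove $\|\pi(b)\| \leq \|b\|$ whenever $b \in \mathcal{A}$ is self-adjoint.

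Next, I would establish the spectral inclusion $\mathrm{sp}_{\mathcal{B}}(\pi(b)) \subset \mathrm{sp}_{\mathcal{A}}(b)$ for every $b \in \mathcal{A}$. Indeed, if $\lambda \notin \mathrm{sp}_{\mathcal{A}}(b)$ then $\lambda \mathbf{1}_{\mathcal{A}} - b$ has an inverse $c \in \mathcal{A}$, and applying the unital $*$-morphism $\pi$ to the identities $c(\lambda \mathbf{1}_{\mathcal{A}} - b) = (\lambda \mathbf{1}_{\mathcal{A}} - b)c = \mathbf{1}_{\mathcal{A}}$ produces a two-sided inverse $\pi(c)$ of $\lambda \mathbf{1}_{\mathcal{B}} - \pi(b)$ in $\mathcal{B}$. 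This is the only place where the unital hypothesis is needed.

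Finally, I would combine the two ingredients using the fact that, in a $\mathcal{C}^{*}$-algebra, the norm of a self-adjoint element equals its spectral radius $r(\cdot)$. For self-adjoint $b$, the element $\pi(b)$ is self-adjoint in $\mathcal{B}$, so the spectral inclusion yields
\begin{equation*}
\|\pi(b)\| = r\bigl(\pi(b)\bigr) \leq r(b) = \|b\|.
\end{equation*}
Feeding this back into the reduction of the first paragraph (applied to the self-adjoint element $a^{*}a$) gives $\|\pi(a)\|^{2} \leq \|a^{*}a\| = \|a\|^{2}$, as desired.

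There is no genuine obstacle: the result is essentially a textbook lemma, and every step is a direct algebraic manipulation or a standard $\mathcal{C}^{*}$-algebra fact (the $\mathcal{C}^{*}$-identity, preservation of inverses by unital morphisms, and the equality of norm and spectral radius for self-adjoint elements). The mildest subtlety worth flagging is the role of unitality, which is what lets us transport the inverse of $\lambda \mathbf{1}_{\mathcal{A}} - b$ across $\pi$ without having to pass to a unitization.
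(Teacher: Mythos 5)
Your proof is correct and follows essentially the same route as the paper's: spectral inclusion under a unital $*$-morphism, combined with the $\mathcal C^*$-identity and the equality of norm and spectral radius for self-adjoint elements. You simply spell out the steps that the paper leaves implicit.
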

\begin{proof} It is easy to see that for any $a$ in $\mathcal A$, the spectrum of $\pi(a)$ is included in the spectrum of $a$ (since $\lambda \mathbf 1_\mathcal A - a$ invertible implies that $\lambda \mathbf 1_\mathcal A - \pi(a)$ is also invertible). Hence we get that for all $a$ in $\mathcal A$
\begin{equation*}	\| \pi(a)\|^2 = \| \pi(a^* a)\| \leq \| a^* a\| = \|a\|^2.
\end{equation*}
\end{proof}
\begin{Lem}\label{LemCs2} Let $\mathcal A$ be a unital $\mathcal C^*$-algebra. Then for any integer $k\geq 1$, there exists a unique $\mathcal C^*$-algebra structure on $\Mk\otimes \mathcal A$ compatible with the structure on $\mathcal A$. In particular, if $\mathcal A$ is a $\mathcal C^*$-probability space equipped with a faithful tracial state $\tau$, then $\Mk\otimes \mathcal A$ is a $\mathcal C^*$-probability space with trace $(\tau_k \otimes \tau)$ and norm $\| \cdot \|_{\tau_k \otimes \tau}$, where $\tau_k$ is the normalized trace on $\Mk$ and $\| \cdot \|_{\tau_k \otimes \tau}$ is given by Formula {\rm(\ref{DefNorm})}.
\end{Lem}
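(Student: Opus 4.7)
For existence of a $\mathcal C^*$-algebra structure, the plan is to use the GNS construction to realize $\mathcal A$ as a norm-closed $^*$-subalgebra of $B(H)$ for some Hilbert space $H$. Then $\Mk \otimes \mathcal A$ is naturally identified, as a unital $^*$-algebra, with $M_k(\mathcal A)$, the algebra of $k\times k$ matrices with entries in $\mathcal A$, and this algebra acts faithfully on $\mathbb C^k\otimes H$ by block matrix multiplication. The operator norm on $B(\mathbb C^k\otimes H)$, restricted to $M_k(\mathcal A)$, then gives a $\mathcal C^*$-norm turning $\Mk\otimes \mathcal A$ into a $\mathcal C^*$-algebra. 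Because $\mathcal A$ is complete and $\Mk$ is finite dimensional, $\Mk\otimes \mathcal A$ is already closed for this norm, so no completion is required.

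For uniqueness, suppose $\|\cdot\|_1$ and $\|\cdot\|_2$ are two $\mathcal C^*$-norms on $\Mk\otimes \mathcal A$ that both restrict to the original norm on $\mathbf 1_k\otimes \mathcal A\cong \mathcal A$. In both norms $\Mk\otimes \mathcal A$ is already complete (same argument as above), so both give $\mathcal C^*$-algebras on the same underlying $^*$-algebra. Applying Lemma \ref{LemCs1} to the identity map viewed in either direction shows it is contractive both ways, hence $\|\cdot\|_1=\|\cdot\|_2$.

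For the second statement, the plan is to check that $\tau_k\otimes \tau$ is a faithful tracial state on $\Mk\otimes \mathcal A$. The trace property follows from the trace property of $\tau_k$ and $\tau$ on elementary tensors and extends by linearity. For faithfulness, writing $A=(A_{u,v})_{1\leq u,v\leq k}\in M_k(\mathcal A)$, one computes
\begin{equation*}
(\tau_k\otimes \tau)(A^*A) \;=\; \frac{1}{k}\sum_{u,v=1}^{k}\tau(A_{u,v}^*A_{u,v}),
\end{equation*}
which is a sum of non-negative terms, and its vanishing forces $\tau(A_{u,v}^*A_{u,v})=0$ for every $u,v$, hence $A_{u,v}=0$ by faithfulness of $\tau$. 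Having a faithful tracial state on the $\mathcal C^*$-algebra $\Mk\otimes \mathcal A$, the cited Proposition 3.17 of \cite{NS}, which gives Formula (\ref{DefNorm}), then identifies the $\mathcal C^*$-norm constructed in the first part with the expression $\|a\|_{\tau_k\otimes\tau}=\lim_n\big((\tau_k\otimes\tau)[(a^*a)^n]\big)^{1/2n}$.

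The only mildly delicate point is the uniqueness of the $\mathcal C^*$-structure; this is essentially automatic here because $\Mk$ is finite dimensional, so no nuclearity machinery is needed and one can rely purely on Lemma \ref{LemCs1}. The rest is direct bookkeeping with the GNS picture and the Nica--Speicher norm formula.
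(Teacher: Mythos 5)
Your proof is correct. It diverges from the paper only in the existence step: the paper's (two-line) sketch obtains the $\mathcal C^*$-norm on $\Mk\otimes\mathcal A$ abstractly via the spectral radius, whereas you realize $\mathcal A\subset B(H)$ by GNS and take the operator norm of $M_k(\mathcal A)$ acting on $\mathbb C^k\otimes H$ — a more concrete route that also makes the completeness of $\Mk\otimes\mathcal A$ (needed to invoke Lemma \ref{LemCs1} on the identity map in both directions) transparent, since finite-dimensionality of $\Mk$ makes convergence in any such norm equivalent to entrywise convergence in $\mathcal A$. The uniqueness argument is identical to the paper's. Your treatment of the second assertion — checking that $\tau_k\otimes\tau$ is a faithful tracial state via $(\tau_k\otimes\tau)(A^*A)=\frac1k\sum_{u,v}\tau(A_{u,v}^*A_{u,v})$ and then citing \cite[Proposition 3.17]{NS} to identify the norm with $\|\cdot\|_{\tau_k\otimes\tau}$ — is not spelled out in the paper at all, so your write-up is strictly more complete; the computation is right and nothing is missing.
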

\begin{proof}[Sketch of the proof] For the existence we consider the norm given by the spectral radius. The uniqueness follows from Lemma \ref{LemCs1}.
\end{proof}

\begin{Prop}\label{PropBloMa} Let $k\geq 1$ be an integer. For all $N\geq 1$, let $\mathbf z_N=(z_1\toN \etc z_p\toN)$, respectively $\mathbf z=(z_1  \etc z_p)$, be self-adjoint non commutative random variables in a $\mathcal C^*$- probability space $(\mathcal A_N, .^*, \tau_N, \| \cdot \|_{\tau_N})$, respectively $(\mathcal A, .^*, \tau, \| \cdot \|_{\tau})$. Assume that the traces $\tau_N$ and $\tau$ are faithful (hence the notation for the norms) and that for any polynomial $P$ in $p$ non commutative indeterminates,
\begin{eqnarray}
		\tau_N [ P (\mathbf z_N ) ] & \underset{N\rightarrow \infty }\longrightarrow & \tau[P(\mathbf z)   ],\label{CvOmega3}\\
		\| P (\mathbf z_N  ) \|_{\tau_N} &  \underset{N\rightarrow \infty }\longrightarrow &  \| P(\mathbf z) \|_\tau.\label{CvOmega4}
\end{eqnarray}
Then for any polynomial $P$ in $p$ non commutative indeterminates with coefficients in $\Mk$, 
\begin{eqnarray}
	 \| P (\mathbf z_N  ) \|_{\tau_k \otimes \tau_N} & \underset{N\rightarrow \infty }\longrightarrow & \| P(\mathbf z) \|_{\tau_k \otimes \tau}.\label{CvOmega2bis}
\end{eqnarray}
\end{Prop}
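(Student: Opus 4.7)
The plan is to prove the two inequalities $\liminf_N \|P(\mathbf z_N)\|_{\tau_k\otimes\tau_N}\geq \|P(\mathbf z)\|_{\tau_k\otimes\tau}$ and $\limsup_N \|P(\mathbf z_N)\|_{\tau_k\otimes\tau_N}\leq \|P(\mathbf z)\|_{\tau_k\otimes\tau}$ separately. The first uses only the trace convergence (\ref{CvOmega3}) and the second uses the scalar norm convergence (\ref{CvOmega4}) in an essential way. Throughout, Lemma \ref{LemCs2} ensures that $\tau_k\otimes\tau_N$ and $\tau_k\otimes\tau$ are faithful tracial states on $\Mk\otimes\mathcal A_N$ and $\Mk\otimes\mathcal A$, so that formula (\ref{DefNorm}) applies on both sides.

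For the $\liminf$: for each integer $m\geq 1$, one has $\|P(\mathbf z_N)\|^{2m}_{\tau_k\otimes\tau_N}\geq (\tau_k\otimes\tau_N)\bigl[(P(\mathbf z_N)^*P(\mathbf z_N))^m\bigr]$, and by (\ref{DefNorm}) the right-hand side converges to $\|P(\mathbf z_N)\|^{2m}_{\tau_k\otimes\tau_N}$ as $m\to\infty$. Expanding in matrix units, $(\tau_k\otimes\tau_N)[(P^*P)(\mathbf z_N)^m]$ is a finite linear combination, with coefficients depending only on $P$ and $m$, of scalar traces $\tau_N[Q_\alpha(\mathbf z_N)]$. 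By (\ref{CvOmega3}) each of these converges, so the whole expression tends to $(\tau_k\otimes\tau)[(P^*P)(\mathbf z)^m]$. Taking $\liminf_N$ for fixed $m$ and then $m\to\infty$ yields the desired inequality.

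For the $\limsup$: the strategy is to pass to a $\mathcal C^*$-ultraproduct and invoke the universal property encoded by Lemmas \ref{LemCs1} and \ref{LemCs2}. Fix a free ultrafilter $\omega$ on $\mathbb N$ and form $\mathcal A_\omega=\ell^\infty(\mathcal A_N)/\{(a_N):\lim_\omega\|a_N\|_{\tau_N}=0\}$, a $\mathcal C^*$-algebra with norm $\|[a_N]_\omega\|=\lim_\omega\|a_N\|_{\tau_N}$. The estimates $\|z_i^{(N)}\|_{\tau_N}\to\|z_i\|_\tau$ given by (\ref{CvOmega4}) show that $\hat z_i:=[z_i^{(N)}]_\omega$ is a well-defined selfadjoint element, and (\ref{CvOmega4}) applied to arbitrary scalar polynomials gives $\|R(\hat{\mathbf z})\|_{\mathcal A_\omega}=\lim_\omega\|R(\mathbf z_N)\|_{\tau_N}=\|R(\mathbf z)\|_\tau$. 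Applying Lemma \ref{LemCs1} to the two natural $*$-homomorphisms sending $\hat z_i\leftrightarrow z_i$ on scalar polynomials, both directions are contractive, hence isometric, so they extend to an isometric $*$-isomorphism $C^*(\hat{\mathbf z})\cong C^*(\mathbf z)$. Tensoring with $\Mk$ and invoking the uniqueness part of Lemma \ref{LemCs2} upgrades this to an isometric isomorphism $\Mk\otimes C^*(\hat{\mathbf z})\cong \Mk\otimes C^*(\mathbf z)$ under which $P(\hat{\mathbf z})\leftrightarrow P(\mathbf z)$; in particular $\|P(\hat{\mathbf z})\|=\|P(\mathbf z)\|_{\tau_k\otimes\tau}$.

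It then remains to identify $\|P(\hat{\mathbf z})\|_{\Mk\otimes\mathcal A_\omega}$ with $\lim_\omega\|P(\mathbf z_N)\|_{\tau_k\otimes\tau_N}$. This follows from the canonical isomorphism $\Mk\otimes\mathcal A_\omega\cong(\Mk\otimes\mathcal A_N)_\omega$, which is obtained from the obvious pair of mutually inverse $*$-homomorphisms defined on matrix-unit decompositions and shown to be isometric by Lemma \ref{LemCs1} (using that $\Mk$ is finite-dimensional, so entries of a bounded matrix are bounded). Under this identification $P(\hat{\mathbf z})$ corresponds to $[P(\mathbf z_N)]_\omega$, hence $\lim_\omega\|P(\mathbf z_N)\|_{\tau_k\otimes\tau_N}=\|P(\mathbf z)\|_{\tau_k\otimes\tau}$; since $\omega$ was arbitrary the full limit exists. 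The main obstacle is this $\limsup$ step, specifically the transfer of the scalar norm convergence to the matrix-valued level through the ultraproduct, which is ultimately what makes the norm assumption (\ref{CvOmega4}) indispensable.
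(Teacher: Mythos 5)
Your proof is correct and follows essentially the same route as the paper: pass to the $\mathcal C^*$-ultraproduct along an arbitrary free ultrafilter, use the scalar norm convergence (\ref{CvOmega4}) to get an isometric isomorphism $\mathcal C^*(\hat{\mathbf z})\cong\mathcal C^*(\mathbf z)$, upgrade it to the $\Mk$-amplified level via the contractivity of $*$-homomorphisms (Lemma \ref{LemCs1}) and the uniqueness of the $\mathcal C^*$-norm on $\Mk\otimes\mathcal A$ (Lemma \ref{LemCs2}), and conclude by arbitrariness of the ultrafilter. The only difference is your separate $\liminf$ step via trace convergence, which is harmless but redundant since the ultraproduct argument already yields the two-sided limit.
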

\noindent We abuse notation and write with the same symbol the traces in $\Mk$ and $\mathcal A_N$ when $N=k$. There is no danger of confusion.
\begin{proof}
For any positive integer $k$ and any ultrafilter $\mathcal U$ on $\mathbb N$, we define the ultraproduct 
		$$\mathfrak A^{(k)} = \prod^{\mathcal U} \mathrm{M}_k(\mathbb C) \otimes \mathcal A_N,$$
which is the quotient of 
		$$ \bigg \{ \  ( a_N)_{N\geq 1} \ \bigg | \ \forall N\geq 1, \ a_N \in \mathrm{M}_k(\mathbb C) \otimes \mathcal A_N \textrm{ and } \underset{N\geq 1}\sup \| a_N \| < \infty \ \bigg \},$$
by 
		$$ \bigg \{ \  ( a_N)_{N\geq 1} \ \bigg | \ \forall N\geq 1, \ a_N \in \mathrm{M}_k(\mathbb C) \otimes \mathcal A_N \textrm{ and } \underset{N\rightarrow \mathcal U }\lim \| a_N \| = 0 \ \bigg \}.$$
The algebra $\mathfrak A^{(k)}$ is a $\mathcal C^*$-algebra whose norm $\| \cdot \|_{\mathfrak A^{(k)}}$ is given by: for all $a$ in $\mathfrak A^{(k)}$, equivalence class of $(a_N)_{N\geq 1} $
		$$\| a \|_{\mathfrak A^{(k)}} = \underset{N\rightarrow \mathcal U} \lim \| a_N\|_{\tau_k\otimes \tau_N}.$$
Furthermore $\mathfrak A^{(k)}$ is a $\mathcal C^*$-probability space which can be identified with M$_k(\mathbb C) \otimes \mathfrak A^{(1)}$. The trace $\tilde \tau$ on $\mathfrak A^{(1)}$ is given by: for all $a$ in $\mathfrak A^{(1)}$, equivalence class of $(A_N)_{N\geq 1} $, one has
		$$\tilde \tau[a] = \underset{N\rightarrow \mathcal U }\lim \tau[A_N].$$
If the classical limit as $N$ goes to infinity exists, then the trace of $a$  does not depends on the ultrafilter $\mathcal U$ and is given by the limit. The trace on $\mathfrak A^{(k)}$ is $(\tau_k \otimes \tilde \tau)$. Notice that $(\tau_k\otimes \tilde \tau)$ on $\mathfrak A^{(k)}$ is not faithful in general, which implies that the norm $\| \cdot \|_{\mathfrak A^{(k)}}$ and the norm $\| \cdot \|_{\tau_k\otimes \tilde \tau}$ given by $(\tau_k\otimes \tilde \tau)$ with Formula (\ref{DefNorm}) are not equal on the whole $\mathcal C^*$-algebra.
\\At last, we can equip $\mathfrak A^{(k)}$ with a structure of operator-valued $\mathcal C^*$-probability space. Define the unital sub-algebra $\mathcal B$ of $\mathfrak A^{(k)}$ as the set
		$$ \Big \{ \ b\otimes \mathbf 1_{\mathfrak A^{(1)}}  \ \Big | \ b \in \mathrm{M}_k (\mathbb C) \ \Big\} \subset \mathfrak A^{(k)}.$$ 
The conditional expectation in $\mathfrak A^{(k)}$ is given by $ (\textrm{id}_k \otimes \tilde \tau):\mathfrak A^{(k)}\rightarrow \mathcal B$. 
\\
\\For $j=1 \etc p$, we denote by $\tilde z_j$ in $\mathfrak A^{(1)}$ the equivalence class of the sequence $ (  z_j\toN  )_{N\geq 1}$. We have by definition of $\mathfrak A^{(k)}$: for all polynomial $P$ in $p+2q$ non commutative indeterminates with coefficients in M$_k(\mathbb C)$,
\begin{eqnarray*}
		\| P( \mathbf z_N) \|_{\tau_N} \underset{N \rightarrow \mathcal U}  \longrightarrow \| P(\tilde {\mathbf z})\|_ {\mathfrak A^{(k)}}
\end{eqnarray*}
Let $\mathcal C^*(\tilde {\mathbf z}) $ be the sub-algebra spanned by $\tilde {\mathbf z}= (\tilde z_1 \etc \tilde z_p)$ in $\mathfrak A^{(1)}$ and let $\mathcal C^*( {\mathbf z}) $ be the sub-algebra spanned by $  {\mathbf z} $ in $\mathcal A$. Then by (\ref{CvOmega4}), the $\mathcal C^*$-algebras $\mathcal C^*(\tilde {\mathbf z}) $ and $\mathcal C^*(  {\mathbf z}) $ are isomorphic. Hence we get an isomorphism of the $^*$-algebras M$_k(\mathbb C) \otimes \mathcal C^*(\tilde {\mathbf z}) $ and M$_k(\mathbb C) \otimes \mathcal C^*({\mathbf z}) $, and so an isomorphism of the $\mathcal C^*$-algebras by Lemma \ref{LemCs1}. Hence, for all polynomial $P$ in $p+2q$ non commutative indeterminates with coefficients in M$_k(\mathbb C)$,
\begin{eqnarray*}
		\| P(\tilde {\mathbf z})\|_ {\mathfrak A^{(k)} }= \|P(\mathbf z)\|_{ \tau_k\otimes \tilde \tau}
\end{eqnarray*}
Hence we get
\begin{eqnarray*}
		\| P( \mathbf z_N) \|_{\tau_k \otimes \tau_N} \underset{N \rightarrow \mathcal U}  \longrightarrow \|P(\mathbf z)\|_{ \tau_k\otimes \tilde \tau}
\end{eqnarray*}
for all ultrafilter $\mathcal U$. Then the convergence holds when $N$ goes to infinity.
\end{proof}

\begin{proof}[Proof of Step 2]
Let $L$ be a selfadjoint degree one polynomial in $p+ q$ non commutative indeterminates with coefficients in M$_k(\mathbb C)$. Define $\ell_N=L(\mathbf x, \mathbf Y_N)$ and $\ell=L(\mathbf x, \mathbf y)$. Then by Proposition \ref{PropBloMa}, for all commutative polynomials $P$, one has
		$$\| P(\ell_N)  \|_{\tau_k\otimes \tau} \underset{N\rightarrow \infty} \longrightarrow \| P(\ell)  \|_{\tau_k\otimes \tau}.$$
The convergence extends to continuous function on the real line and then, with an appropriate choice of test functions, Step 2 follows.
\end{proof}

\section{Proof of Step 3: from Stieltjes transforms to spectra}\label{App2}
\noindent Let $\mathbf X_N, \mathbf Y_N, \mathbf x$ and $\mathbf y$ be as in Section \ref{PartSheme}. As before $\mathbf x, \mathbf y,$ and $\mathbf Y_N$ are assumed to be realized in a same $\mathcal C^*$-probability space $(\mathcal A, .^*, \tau, \| \cdot \|)$ with faithful trace. Let $L$ be a selfadjoint degree one polynomial with coefficients in $\Mk$. 
\\
\\For any function $f : \mathbb R \rightarrow \mathbb R$ and any Hermitian matrix $A$ with spectral decomposition $A= U\mathrm{diag} \ (\lambda_1\etc \lambda_K)U^*$, with $U$ unitary, we set the Hermitian matrix $f(A) = U\mathrm{diag} \ (f(\lambda_1) \etc f(\lambda_K))U^*$. For any function $f : \mathbb R \mapsto \mathbb R$, we set
		$$D_N(f) = (\tau_k\otimes \tau_N)\big[f (  L(\mathbf X_N, \mathbf Y_N)  ) \big ].$$
By Step 2, for all $\varepsilon>0$, there exists $N_0\geq 1$ such that for all $N\geq N_0$, one has
\begin{equation*}
		\sp\Big (  \ L(\mathbf x, \mathbf Y_N) \ \Big) \subset \sp\Big (  \ L(\mathbf x, \mathbf y) \ \Big) + (-\varepsilon, \varepsilon).
\end{equation*}
Hence, for any function $f$ vanishing on a neighborhood of the spectrum of $L(\mathbf x, \mathbf y)$, there exists $N_0\geq 1$ such that for all $N\geq N_0$, the function $f$ actually vanishes on a neighborhood of the spectrum of $L(\mathbf x, \mathbf Y_N)$. In particular, with $\mu_N$ (respectively $\nu_N$) denoting the empirical eigenvalue distribution of $L_N=L(\mathbf X_N, \mathbf Y_N)$ (respectively $\ell_N=L(\mathbf x, \mathbf Y_N)$), one has
\begin{equation}\label{RemindSp}
		\esp\Big [ D_N(f) \Big ]  = \esp\Big [ \ \int f \ \mathrm{d}\mu_N \ \Big] = \esp\Big [ \ \int f \ \mathrm{d}\mu_N \ \Big]  - \ \int f \ \mathrm{d}\nu_N.
\end{equation}
\noindent Furthermore, by Estimate (\ref{EqPurp}), with the Stieltjes transforms of $L_N$ and of $\ell_N$ defined by: for all $\lambda$ in $\mathbb C^+$
\begin{eqnarray*}
		g_{L_N}(\lambda) & = & \esp \bigg [ (\tau_k \otimes \tau_N) \Big[ \ \big(\lambda \mathbf 1_k \otimes \mathbf 1_{N} - L_N \ \big)^{-1} \  \Big]  \ \bigg] =  \esp\Big [ \ \int  \frac 1{\lambda - t} \mathrm{d}\mu_N(t) \Big ]  \\
		g_{\ell_N}(\lambda) & = & (\tau_k \otimes \tau) \Big[ \ \big(\lambda \mathbf 1_{k}\otimes \mathbf 1  -  \ell_N \ \big)^{-1} \  \Big] = \int  \frac 1{\lambda - t} \mathrm{d}\nu_N(t) ,
\end{eqnarray*}
we have shown that: for any $\varepsilon>0$ and $A>0$, there exist $N_{0}, c, \eta, \gamma, \alpha>0$ such that for all $N \geq N_{0}$, for all $\lambda$ in $\mathbb C$ such that $\varepsilon\leq (\mathrm{Im } \  \lambda)^{-1} \leq N^{\gamma}$ and $|$Re $\lambda|\leq A$
\begin{equation} \label{RemindEstim}
		| g_{L_N}(\lambda) - g_{\ell_N}(\lambda) | \leq  \frac {c}{N^{2}} ( \mathrm{ Im} \  \lambda)^{-\alpha}.
\end{equation}
\noindent With (\ref{RemindSp}) and (\ref{RemindEstim}) established, it is easy to show with minor modifications of \cite[Lemma 5.5.5]{AGZ} the following result.
\begin{Lem} \label{555}For every smooth function $f: \mathbb R \rightarrow \mathbb R$ non negative, compactly supported and vanishing on a neighborhood of the spectrum of $ L({\mathbf x}, \mathbf y) $, there exists a constant such that for all $N$ large enough
\begin{equation}\label{MeanVan}
		\Big| \esp \big[D_N(f) \ \big] \ \Big| \leq \frac c{N^{2}}.
\end{equation}
\end{Lem}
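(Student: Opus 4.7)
\noindent The plan is to use the Helffer-Sj\"ostrand functional calculus to convert the bound on $\esp[D_N(f)]$ into an estimate on an integral involving only the Stieltjes transform difference $g_{L_N}-g_{\ell_N}$, so that the pointwise bound (\ref{RemindEstim}) can be applied directly.

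First, by Step 2, for all $N$ large enough one has $\sp(\ell_N) \subset \sp(L(\mathbf x,\mathbf y))+(-\delta,\delta)$ with $\delta>0$ chosen so that this neighborhood is disjoint from $\mathrm{supp}(f)$; hence $\int f \, \mathrm{d}\nu_N = 0$ for $N$ large, and by (\ref{RemindSp}), $\esp[D_N(f)] = \esp\!\int f \, \mathrm{d}\mu_N-\int f \, \mathrm{d}\nu_N$. Next, fix an integer $n$ to be chosen later and pick a compactly supported almost analytic extension $\tilde f\colon \mathbb C\to\mathbb C$ of $f$ of the form $\tilde f(x+iy)=\chi(y)\sum_{k=0}^{n}\frac{(iy)^k}{k!}f^{(k)}(x)$ with $\chi\in C_c^\infty(\mathbb R)$ equal to one near the origin, so that $|\partial_{\bar z}\tilde f(x+iy)|\le C_n |y|^{n}$ and $\tilde f$ is supported in a fixed rectangle $[-A,A]\times[-B,B]$. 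The Helffer-Sj\"ostrand formula then gives, for any compactly supported probability measure $\rho$,
$$\int f \, \mathrm{d}\rho = -\frac{1}{\pi}\iint_{\mathbb C}\partial_{\bar z}\tilde f(z) \, g_\rho(z) \, \mathrm{d}x \, \mathrm{d}y,$$
and, applied to the deterministic measure $\nu_N$ and to $\mu_N$ under expectation, it yields
$$\esp[D_N(f)] = -\frac{1}{\pi}\iint_{\mathbb C}\partial_{\bar z}\tilde f(z)\bigl(g_{L_N}(z)-g_{\ell_N}(z)\bigr) \mathrm{d}x \, \mathrm{d}y.$$

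I would then split the integration along the threshold $|y|=N^{-\gamma}$. On the outer part $\{N^{-\gamma}\le|y|\le B\}$, which sits inside the rectangle $[-A,A]\times[-B,B]$, estimate (\ref{RemindEstim}) applies with $\varepsilon=1/B$ and gives $|g_{L_N}(z)-g_{\ell_N}(z)|\le c N^{-2}|y|^{-\alpha}$; combined with $|\partial_{\bar z}\tilde f|\le C_n|y|^{n}$, the corresponding contribution is bounded by $\frac{c'}{N^{2}}\int_0^{B}|y|^{n-\alpha} \, \mathrm{d}y$, which is $O(N^{-2})$ as soon as $n>\alpha-1$. On the inner strip $\{|y|<N^{-\gamma}\}$, I would use the trivial upper bound $|g_{L_N}(z)|,|g_{\ell_N}(z)|\le |y|^{-1}$ valid for any Stieltjes transform of a probability measure, which together with $|\partial_{\bar z}\tilde f|\le C_n|y|^{n}$ gives a contribution bounded by $2AC_n\int_0^{N^{-\gamma}}|y|^{n-1} \, \mathrm{d}y = O(N^{-n\gamma})$, and this is $O(N^{-2})$ provided $n\gamma\ge 2$.

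Choosing any integer $n\ge \max(\alpha,\, 2/\gamma)$ therefore closes both estimates and yields the desired bound $|\esp[D_N(f)]|\le c/N^{2}$. The only point that requires any care is the balance between the threshold $N^{-\gamma}$ and the vanishing order $n$ of $\partial_{\bar z}\tilde f$, but this is routine bookkeeping once the two key ingredients are in hand: the vanishing of $\int f \, \mathrm{d}\nu_N$ via Step 2 and the quantitative Stieltjes transform estimate (\ref{RemindEstim}). I do not expect any deeper conceptual obstacle.
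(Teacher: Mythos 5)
Your argument is correct and is essentially the paper's: the paper proves this lemma only by citing \cite[Lemma 5.5.5]{AGZ} ``with minor modifications'', and that lemma's proof is precisely the almost-analytic-extension (Helffer--Sj\"ostrand) computation you carry out, the ``minor modifications'' being exactly your splitting of the integral at $|y|=N^{-\gamma}$ with the trivial bound $|g|\le |y|^{-1}$ on the inner strip to accommodate the restricted range of validity of (\ref{RemindEstim}). The choice $n\ge\max(\alpha,2/\gamma)$ and the use of Step 2 to kill $\int f\,\mathrm{d}\nu_N$ match what the paper intends.
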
 
\noindent To get an almost sure control of $D_N(f)$, we use the fact that the entries of the matrices $\mathbf X_N$ satisfy a concentration inequality.
\begin{Lem} \label{FromStoS}With $f$ as in Lemma \ref{555}, there exists $\kappa>0$ such that, almost surely
\begin{equation}
		N^{1+\kappa} D_N(f) \underset{N\rightarrow\infty}{\longrightarrow}0.
\end{equation}
\end{Lem}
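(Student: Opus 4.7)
The plan is to deduce the almost sure convergence from a second-moment estimate combined with Borel--Cantelli. Since $f \geq 0$, the random variable $D_N(f)$ is non-negative, so Markov's inequality yields, for any $\varepsilon>0$ and $\kappa>0$,
$$P\bigl(N^{1+\kappa} D_N(f) > \varepsilon\bigr) \;\leq\; \frac{N^{2(1+\kappa)}}{\varepsilon^2}\,\esp\bigl[D_N(f)^2\bigr].$$
If one can prove $\esp[D_N(f)^2] = O(N^{-4})$, then for every $\kappa<1/2$ these probabilities are summable in $N$, and Borel--Cantelli delivers $N^{1+\kappa}D_N(f) \to 0$ almost surely.

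Decomposing $\esp[D_N(f)^2] = \var(D_N(f)) + \esp[D_N(f)]^2$, Lemma \ref{555} immediately supplies $|\esp[D_N(f)]| \leq c/N^{2}$, so the squared mean contributes $O(N^{-4})$. For the variance, recall that after the reduction carried out at the start of Section \ref{PartSheme} the only randomness lies in the GUE family $\mathbf{X}_N$, whose joint law satisfies a Poincar\'e inequality with constant $1/N$ (the entries are independent Gaussians with variance of order $1/N$). Hence
$$\var\bigl(D_N(f)\bigr) \;\leq\; \frac{1}{N}\,\esp\bigl[\|\nabla D_N(f)\|^2\bigr],$$
where the gradient is taken with respect to the real and imaginary parts of the entries of $\mathbf{X}_N$.

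The remaining task is to bound the mean squared gradient. A direct computation of the differential of $D_N(f) = \tfrac{1}{kN}\,\mathrm{Tr}(f(L_N))$ entry by entry, of the same flavor as in Proposition \ref{SDforMean}, yields
$$\|\nabla D_N(f)\|^2 \;\leq\; \frac{C}{N^2}\,\|f'(L_N)\|_e^2 \;=\; \frac{C'}{N}\,D_N\bigl((f')^2\bigr),$$
where $C,C'$ depend only on $k$ and on $\sum_j\|a_j\|^2$, and I have used $\|f'(L_N)\|_e^2 = \sum_i f'(\lambda_i)^2 = kN\cdot D_N((f')^2)$. The key observation is that $(f')^2$ is again a smooth, non-negative, compactly supported function vanishing on a neighborhood of $\mathrm{Sp}\,L(\mathbf{x},\mathbf{y})$; so Lemma \ref{555} applies to it as well and gives $\esp[D_N((f')^2)] \leq c'/N^2$. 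Chaining these bounds produces $\var(D_N(f)) = O(N^{-4})$, which together with the mean estimate yields $\esp[D_N(f)^2] = O(N^{-4})$ and closes the argument.

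The main technical point is the gradient computation: one has to track the tensor structure introduced by the coefficients $a_0,\ldots,a_p \in \mathrm{M}_k(\mathbb{C})$ and verify that, after summing the squared partial derivatives, the bound factors as claimed through $\|f'(L_N)\|_e^2$. This is essentially the same manipulation already performed in Section \ref{SDMpart} in the proof of Proposition \ref{SDforMean}, so the obstacle is bookkeeping rather than conceptual. The flexibility in the exponent $\kappa < 1/2$ is irrelevant for the application, which only requires the existence of some $\kappa > 0$.
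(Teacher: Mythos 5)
Your proof is correct, but it implements Step 3 by a genuinely different (and lighter) route than the paper. The paper also relies on Gaussian concentration, but in its exponential form for Lipschitz functions: since the global Lipschitz constant of $A\mapsto(\tau_k\otimes\tau_N)[f(A)]$ is only $O(N^{-1/2})$, it must first truncate to the event $\{D_N(f'^2)\leq N^{-4\kappa}\}$, where the local Lipschitz constant improves to $O(N^{-1/2-2\kappa})$, extend the restricted function to a globally Lipschitz one, apply the exponential inequality to the extension, and separately control the bad event by Markov plus Lemma \ref{555} applied to $f'^2$. You bypass the whole truncation/extension machinery by applying the Poincar\'e inequality directly to $D_N(f)$: the pointwise gradient bound $\|\nabla D_N(f)\|^2\leq \frac{C}{N}\,D_N\big((f')^2\big)$ (which is exactly the Cauchy--Schwarz computation $|\mathrm{D}_A\Phi_N^{(f)}(M)|^2\leq \Phi_N^{(f'^2)}(A)\,\|M\|_e^2/(kN)$ already used in the paper to bound $\rho^{(f)}_{N,\kappa}$) turns the variance into $\frac{C}{N^2}\,\esp\big[D_N\big((f')^2\big)\big]=O(N^{-4})$ by a second application of Lemma \ref{555}, and then second-moment Markov plus Borel--Cantelli closes the argument for any $\kappa<1/2$. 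Both proofs hinge on the same self-improving observation — that Lemma \ref{555} applies to $(f')^2$ and thereby controls the fluctuations of $D_N(f)$, not just its mean — but your variance route is more elementary; what it gives up is only the exponential tail bound, which is not needed for the polynomial rate required here (and your admissible range $\kappa<1/2$ is in fact slightly wider than the paper's effective $\kappa<1/4$). The one point to spell out in a final write-up is the normalization relating the Euclidean gradient in the real coordinates of the GUE entries to the maximal directional derivative over Hermitian perturbations of unit Euclidean norm; this costs only an absolute constant and the paper glosses over it in the same way.
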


\begin{proof} The law of the random matrices satisfying a Poincar\'e inequality with constant $\frac 1 N$ and $L$ being a polynomial of degree one, for all Lipschitz function $\Psi: $ M$_{kN}( \mathbb C) \mapsto \mathbb R$, by \cite[Lemma 5.2]{GUI} one has:
\begin{equation}\label{Concentration}
		\mathbb  P \Big ( \ \big | \Psi (L_N  ) - \esp\big[ \Psi (L_N ) \ \big] \ \big | \geq \delta \Big) \leq K_1 e^{-K_2 \frac {\sqrt N \delta} {|\Psi|_{\mathcal L}} },
\end{equation}
where $K_1, K_2$ are positive constants and $|\Psi|_{\mathcal L}= \underset{A \neq  B \in \textrm{M}_{kN}( \mathbb C) }{\textrm{ sup }} \frac{ | \Psi(A)-\Psi( B)|}{\| A - B\|_e}$. Recall that the Euclidean norm $\| \cdot \|_e$ of a matrix $A=(a_{i,j})_{i,j=1}^{kN}$ is given by
		$$ \|A\|_e = \sqrt{ \sum_{i,j=1}^{kN} |a_{i,j}|^2 }.$$
For any Hermitian matrices $A$ in M$_{kN}( \mathbb C)$ and any function $f : \mathbb R \rightarrow \mathbb R$, we set
\begin{equation} 
		\Phi^{(f)}_N( A) = (\tau_k\otimes \tau_N)\big[ f(A) \ \big ].
\end{equation}
For all smooth function $f : \mathbb R \rightarrow \mathbb R$, $N\geq 1$ and $0<\kappa<\frac 1 2$, we define
\begin{equation}
		\mathcal B^{(f)}_{N,\kappa} = \Bigg \{ A \in \textrm{M}_{kN}(\mathbb C) \ \Big | \ A \textrm{ is Hermitian and }\ \Big | \Phi^{(f'^2)}_N(A) \Big | \leq \frac 1 {N^{4\kappa}} \Bigg\},
\end{equation}
and denote $\rho^{(f)}_{N, \kappa} = |(\Phi_N^{(f)})_{|\mathcal B_{N,\kappa}}|_{\mathcal L}.$ Define $\Psi_N^{(f)}: $ M$_{kN}( \mathbb C) \mapsto \mathbb R$ by: $\forall A \in $  M$_N( \mathbb C)$
\begin{equation}
		\Psi_N^{(f)}( A) =  \underset{ B \in \mathcal B^{(f)}_{N,\kappa}}{\sup} \Big\{ \Phi_N^{(f)}( B) -\rho^{(f)}_{N, \kappa} \ \|A-B\|_2 \ \Big\} ,
\end{equation}
and denote $\tilde D_N(f) = \Psi_N^{(f)}(   L_N  ) $. By \cite[Proof of Lemma 5.9]{GUI}, $\Psi_N^{(f)}$ coincides with $\Phi_N^{(f)}$ on $\mathcal B^{(f)}_{N,\kappa}$ and is Lipschitz with constant $|\Psi_N^{(f)}|_\mathcal L \leq\rho^{(f)}_{N, \kappa}$.
\\
\\For all Hermitian matrices $A$ in M$_{kN}(\mathbb C)$, $M$ in M$_{kN}(\mathbb C)$ and $n\geq 1$, one has $\frac d{dt}_{|t=0}(A+tM)^n = \sum_{m=0}^n A^{m}MA^{n-m-1}$ and then $\frac d{dt}_{|t=0}\ttrN[(A+tM)^n] = \ttrN[nA^{n-1}M]$. So for all polynomials $P$, one has D$_A\Phi^{(P)}_N(M) = \ttrN[P'(A)M]$. Hence, by density of polynomials, for any smooth function $f:\mathbb R \rightarrow \mathbb R$ one has D$_A\Phi^{(f)}_N(M) = \ttrN[ {f'}(A)M]$. By the Cauchy-Schwarz inequality, we get
\begin{eqnarray*}
		\big | \textrm{D}_A\Phi^{(f)}_N(M) \big |^2 & =  & |  \ttrN[f'(A)M] |^2\\
				& \leq & \ttrN[f'(A)^2] \times \ttrN[M^*M]\\
				& = & \Phi^{({f'}^2)}_N(M)\times \frac { \| A \|_e }{kN}.
\end{eqnarray*}
Then, for any smooth function $f$, one has
\begin{equation}\label{Step3Rho}
		\rho^{(f)}_{N, \kappa}  \ \leq \frac 1{\sqrt{kN}} \| \ (\Phi^{(f'^2)}_N) _{|\mathcal B^{(f)}_{N,\kappa}} \ \|_{\infty}^{1/2},
\end{equation}
where $\|\cdot \|_\infty$ denotes the supremum of the considered function on the set of $kN\times kN$ Hermitian matrices. Hence we get that $|\Psi_N^{(f)}|_\mathcal L \leq \rho^{(f)}_{N,\kappa} \leq \frac 1 {\sqrt k} {N^{-1/2-2\kappa}}$.
\\
\\We fix $f$ a smooth function, non negative, compactly supported and vanishing on a neighborhood of the spectrum of $ L({\mathbf x}, \mathbf y) $. By the Tchebychev inequality
\begin{eqnarray}
		\mathbb P( L_N \notin \mathcal B^{(f)}_{N,\kappa}) & = & \mathbb P\Big(  D_N(f'^2) \geq \frac 1 {N^{4\kappa}} \Big)\leq  N^{4\kappa} \esp\Big[  D_N(f'^2) \Big ] \leq \frac c {N^{2-4\kappa}},\label{By1}
\end{eqnarray}
where we have used Lemma \ref{555} ($f'^2$ also vanishes in a neighborhood of the spectrum of $ L({\mathbf x}, \mathbf y) $). Moreover, since $\Psi^{(f)}_N$ and $\Phi^{(f)}_N$ are equals in $\mathcal B^{(f)}_{N,\kappa}$ and $\| \Psi^{(f)}_N\|_\infty \leq \|\Phi^{(f)}_N\|_\infty$,
\begin{equation}
		\Big| \esp\big[\tilde D_N(f) - D_N(f)\big] \ \Big| \leq \|\Phi^{(f)}_N\|_\infty \mathbb P( L_N \notin \mathcal B^{(f)}_{N,\kappa})\leq  \|\Phi^{(f)}_N\|_\infty\frac c {N^{2-4\kappa}} \label{By2}
\end{equation}
\\
\\Now, by (\ref{Concentration}) applied to $\Psi^{(f)}_N$: for all $\delta>0$
\begin{eqnarray*}
		\lefteqn{ \mathbb P\bigg ( \Big |  D_N(f) - \esp \big[D_N(f)\ \big] \ \Big| >\frac {\delta}{N^{1+\kappa}} \ \textrm{ and } \ L_N \in \mathcal B^{(f)}_{N,\kappa} \bigg) } \\
		& \leq & P\bigg ( \Big |  \tilde D_N(f) - \esp \big[\tilde D_N(f)\ \big] \ \Big| >\frac {\delta}{N^{1+\kappa} }\ - \ \Big| \esp\big[\tilde D_N(f) - D_N(f)\big] \ \Big| \ \bigg)\\
		& \leq & K_1 \exp\bigg(     -  \sqrt k K_2  N^{\kappa} (\delta- \ \Big| \esp\big[\tilde D_N(f) - D_N(f)\big] \ \Big|)    \bigg)
\end{eqnarray*}
By (\ref{By1}), (\ref{By2}), Lemma \ref{555} and the Borel-Cantelli lemma, $D_N(f)$ is almost surely of order $N^{1+\kappa}$ at most.
\end{proof}

\begin{Prop}
For every $\varepsilon >0$, there exists $N_0$ such that for $N\geq N_0$
\begin{equation}
		\sp \Big ( \ L(\mathbf X_N, \mathbf Y_N) \ \Big ) \ \subset \sp \Big ( \ L(\mathbf x, \mathbf y ) \ \Big ) +(-\varepsilon, \varepsilon)
\end{equation}
\end{Prop}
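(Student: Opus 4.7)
The plan is to deduce the inclusion from Lemma \ref{FromStoS} by a standard test-function contradiction argument, after first securing a deterministic upper bound on $\|L_N\|$ so that the chosen cutoff $f$ can be taken deterministic and Lemma \ref{FromStoS} applied directly. Fix $\varepsilon>0$. Writing $L_N = a_0\otimes\mathbf 1_N + \sum_{j=1}^p a_j\otimes X_j^{(N)} + \sum_{j=1}^q b_j\otimes Y_j^{(N)}$, Theorem \ref{HTTh} applied to each GUE matrix individually gives $\|X_j^{(N)}\|\to 2$ almost surely, while hypothesis (\ref{Th1sCV}) yields $\|Y_j^{(N)}\|\to\|y_j\|$ almost surely. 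Hence almost surely
$$\limsup_{N\to\infty} \|L_N\| \leq \|a_0\| + 2\sum_{j=1}^p \|a_j\| + \sum_{j=1}^q \|b_j\|\,\|y_j\| =: M_0,$$
so fixing any deterministic $M > M_0$, there is almost surely $N_1 \geq 1$ with $\sp(L_N) \subset [-M,M]$ for all $N \geq N_1$.

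Since $\sp(L(\mathbf x,\mathbf y))$ is a compact subset of $[-M_0, M_0]$, a standard mollification produces a \emph{deterministic} smooth function $f:\mathbb R\to\mathbb R_+$, compactly supported in $[-M-1, M+1]$, vanishing on an open neighborhood of $\sp(L(\mathbf x,\mathbf y))$, and identically equal to $1$ on the compact set $[-M, M]\setminus\bigl(\sp(L(\mathbf x,\mathbf y))+(-\varepsilon,\varepsilon)\bigr)$. Apply Lemma \ref{FromStoS} to this deterministic $f$: there exists $\kappa>0$ such that almost surely $N^{1+\kappa} D_N(f)\to 0$.

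To conclude, argue by contradiction: if some eigenvalue $\lambda$ of $L_N$ lay outside $\sp(L(\mathbf x,\mathbf y))+(-\varepsilon,\varepsilon)$ for $N$ arbitrarily large, then for such $N \geq N_1$ one would have $\lambda \in [-M,M]$, hence $f(\lambda)=1$; since $f \geq 0$,
$$D_N(f) = \frac{1}{kN}\sum_{i=1}^{kN} f(\lambda_i) \geq \frac{1}{kN},$$
contradicting $N^{1+\kappa} D_N(f) \to 0$. Thus almost surely there exists $N_0$ with $\sp(L_N) \subset \sp(L(\mathbf x,\mathbf y))+(-\varepsilon,\varepsilon)$ for all $N \geq N_0$, which is the claim. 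The genuine obstacles have been overcome in Lemmas \ref{555} and \ref{FromStoS}---the mean estimate from the Stieltjes-transform comparison (\ref{EqPurp}), promoted to an almost sure bound via the concentration (Poincar\'e) inequality for the GUE law---so this final step is essentially a routine functional-calculus deduction; the only care needed is to keep $f$ deterministic, which is why the preliminary uniform bound on $\|L_N\|$ is required.
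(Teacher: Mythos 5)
Your proposal is correct and follows essentially the same route as the paper: a deterministic uniform bound on $\sp(L_N)$, a fixed non-negative test function vanishing near $\sp(L(\mathbf x,\mathbf y))$ and equal to $1$ on the rest of $[-M,M]$, and the observation that a stray eigenvalue would force $D_N(f)\geq (kN)^{-1}$, contradicting Lemma \ref{FromStoS}. The only cosmetic difference is that the paper obtains the norm bound on the GUE matrices from \cite[Exercise 2.1.27]{AGZ} rather than from Theorem \ref{HTTh}, which changes nothing.
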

\begin{proof}
By (\ref{Th1wCV}) and \cite[Exercise 2.1.27]{AGZ}, almost surely there exists $N_0 \in \mathbb N$ and $D\geq 0$ such that the spectral radii of the matrices $(\mathbf X_N, \mathbf Y_N)$ is bounded by $D$ for all $N\geq N_0$. Hence, there exists $M \geq 0$ such that almost surely one has
\begin{equation*}
		\sp \Big ( \ L(\mathbf X_N, \mathbf Y_N ) \ \Big ) \subset [ -M, M ].
\end{equation*}
Let $f: \mathbb R \mapsto \mathbb R$ non negative, compactly supported, vanishing on $\sp  ( \ L(\mathbf x, \mathbf y ) \  ) +(-\varepsilon/2, \varepsilon/2)$ and equal to one on  $[-M,M]\smallsetminus  \big (\sp  ( \ L(\mathbf x, \mathbf y )  ) +(-\varepsilon, \varepsilon) \ \big )$. Then almost surely for $N$ large enough, no eigenvalue of $L(\mathbf X_N, \mathbf Y_N )$ belongs to the complementary of  $\sp  ( \ L(\mathbf x, \mathbf y  ) \ ) +(-\varepsilon, \varepsilon)$, since otherwise
\begin{equation*}
		 (\tau_k\otimes \tau_N)\Big[f\big(  L({\mathbf X}_{N}, \mathbf Y_{N} ) \ \big) \Big ] \geq N^{-1} \geq N^{-1-\kappa} 
\end{equation*}
in contradiction with Lemma \ref{FromStoS}.
\end{proof}

\section{Proof of Corollaries 2.1, 2.2 and 2.4 }\label{PartEx}

\subsection{Proof of Corollary 2.1: diagonal matrices}\label{ProofCorDetDiag}

Let $\mathbf D_N=(D_1\toN\etc D_q\toN)$ be as in Corollary \ref{CorDetDiag}. For any $j=1\etc p$, the number of jump of $F^{-1}_j$ is countable. We show that the convergence of the norm (\ref{cor:diag2}) holds when we chose $v=(v_1\etc v_q)$ in $[0,1]^q$ such that for any $k\neq \ell$ in $\{1 \etc q\}$, the sets of jump points of $u \mapsto F^{-1}_k(u+v_k)$ and $u \mapsto F^{-1}_\ell(u+v_\ell)$ are disjoint. We show that for such a $v$, the family $\mathbf D^v_N$ satisfies the assumptions of Theorem \ref{MainTh}. In all this section, we always denote $\lambda_i$ instead of $\lambda_i\toN$ for any $i=1\etc N$.
\\
\\{\bf The convergence of traces, case $v=(0\etc 0)$:} Since the matrices commute, we only consider commutative polynomials. We start by showing that for all polynomials $P$,
\begin{equation}\label{proof:cor:1}
		\tau_N\Big [ P(\mathbf D_N) \ \Big ] \underset{N\rightarrow \infty}{\longrightarrow}  \int_0^1  P\Big( F_1^{-1}(u) \etc  F_q^{-1}(u) \ \Big)du.
\end{equation}
Denote by $\mu$ the probability distribution of the random variable $\big( F_1^{-1}(U ) \etc  F_q^{-1}(U ) \ \big)\in \mathbb R^q$, where $U$ is distributed according to the uniform distribution on $[0,1]$. In order to get (\ref{proof:cor:1}), we show that the sequence of measure in $\mathbb R^q$
		$$\Big ( \frac 1 N \sum_{i=1}^N \delta_{ \lambda_{i }(1)} \etc \frac 1 N \sum_{i=1}^N \delta_{ \lambda_{i }(q)} \Big ) $$
converges weakly to $\mu $. This sequence is tight, since there exists a $B>0$ such that for all $j=1\dots q$, for all $i=1\dots N$, one has $\lambda_i (j) \in [-B,B]$. Hence it is sufficient to show the following: for all real numbers $a_1 \etc a_q$, for all $\varepsilon>0$, there exists $\eta>0$ such that 
\begin{equation}\label{Cor1Purp}
		\underset{N\rightarrow \infty}{\mathrm{limsup}} \ \Big | \frac 1 N \sum_{i=1}^N  \mathbf 1_{]-\infty, a_1+\eta]}\big ( \lambda_{i } (1) \big) \times \dots \times  \mathbf 1_{]-\infty, a_q+\eta]}\big ( \lambda_{i } (q) \big) - \mu \big( \ ]-\infty, a_1] \times \dots \times ]-\infty, a_q]  \ \big) \ \Big | \leq \varepsilon.
\end{equation}
Fix $(a_1\etc a_q)$ in $\mathbb R^q$ and $\varepsilon >0$. Remark that one has 
		$$ \mu \big( \ ]-\infty, a_1] \times \dots \times ]-\infty, a_q]  \ \big) =   \underset{j=1\dots q}{ \min} F_j(a_j).$$
Let $j_0$ be an integer such that $F_{j_0}(a_{j_0})=\mu \big( \ ]-\infty, a_1] \times \dots \times ]-\infty, a_q]  \ \big)$. For any $j=1\etc q$, the empirical spectral distribution of $D_j\toN$ converges to $\mu_j$. Then for all $a$ in $\mathbb R$ point of continuity for $F_j$, one has
\begin{equation}\label{Cor1LGN}
	\frac 1 N \sum_{i=1}^N \mathbf 1_{]-\infty , a]}\big ( \lambda_i (j)Ê\big ) \underset{N\rightarrow \infty} \longrightarrow \mu_j\big ( \ ]-\infty, a ] \ \big ).
\end{equation}
Let $\eta>0$ such that
\begin{itemize}
	\item $\mu_{j_0}\big ( \ ] a_{j_0} , a_{j_0}+\eta] \ \big )<\varepsilon/2$.
	\item for all $j=1\etc q$, the real numbers $a_j+\eta$ and $a_{j_0}+\eta$ are points of continuity for $F_j$.
\end{itemize}
By (\ref{Cor1LGN}) with $a=a_j+\eta$, there exists $ N_0\geq 1$ such that for all $N\geq N_0$ and $j=1\etc q$, one has
		$$ F_{j}(a_j+\eta) - \varepsilon \leq \frac 1 N \mathrm{Card} \ \Big \{ i=1\dots N \ \Big | \ \lambda_i (j) \leq a_j+\eta \ \Big \}.$$
But $F_{j}(a_j+\eta)\geq F_{j}(a_j) \geq F_{j_0}(a_{j_0})$. Then we have 
		$$   N\Big (  F_{j_0}(a_{j_0}) - \varepsilon \Big ) \leq \mathrm{Card} \  \Big \{ i=1\dots N \ \Big | \ \lambda_i (j) \leq a_j+\eta \ \Big \} .$$
The $\lambda_i (j)$ are non decreasing, so we get
\begin{equation}\label{Cor1ND1}
		\forall j=1\dots q, \ \forall i\leq N\Big (  F_{j_0}(a_{j_0}) - \varepsilon \Big ) , \ \lambda_i (j) \leq a_j+\eta.
\end{equation}
On the other hand, by (\ref{Cor1LGN}) with $j=j_0$ and $a=a_{j_0}+\eta$, there exists $ N_0\geq 1$ such that, for all $N\geq N_0$, one has
		$$ \frac 1 N   \mathrm{Card} \  \Big \{ i=1\dots N \ \Big | \ \lambda_i (j_0) \leq a_{j_0}+\eta \ \Big \} \leq  F_{j_0}(a_{j_0}+\eta)+\varepsilon/2.$$
But $ F_{j_0}(a_{j_0}+\eta) \leq  F_{j_0}(a_{j_0}) +\varepsilon/2$, so that 
		$$ \mathrm{Card} \  \Big \{ i=1\dots N \ \Big | \ \lambda_i (j_0) \leq a_{j_0}+\eta \ \Big \}  \leq N \Big ( F_{j_0}(a_{j_0}) +\varepsilon \Big ).$$
The $\lambda_i (j_0)$ are non decreasing, then we get
\begin{equation}\label{Cor1ND2}
		\forall i\geq N\Big (  F_{j_0}(a_{j_0}) + \varepsilon \Big ) , \ \lambda_i (j_0) \geq a_{j_0}+\eta.
\end{equation}
By (\ref{Cor1ND1}) and (\ref{Cor1ND2}) we obtain: for all $N\geq N_0$
		$$ \Big |  \frac 1 N \sum_{i=1}^N  \mathbf 1_{]-\infty, a_1+\eta]}\big ( \lambda_i (1) \big) \times \dots \times  \mathbf 1_{]-\infty, a_q+\eta]}\big ( \lambda_i (q) \big)  - F_{j_0}(a_{j_0}+ \eta) \Big | \leq \varepsilon,$$
and then (\ref{Cor1Purp}) is satisfied. So the convergence (\ref{proof:cor:1}) holds when $v$ is zero.
\\
\\{\bf The convergence of traces, case $v$ in $[0,1]^q$:} To deduce the general case we shall need the following lemmas.

\begin{Lem}[Quantiles of real diagonal matrices with sorted entries]\label{lem:corDiag:0} Let $D_N= \ \mathrm{diag} \ (\lambda_1  \etc \lambda_N )$ be an $N \times N$ real diagonal matrix with non decreasing entries along its diagonal. Assume that the empirical eigenvalue distribution of $D_N$ converges weakly to a compactly supported probability measure $\mu$. Let $F$ denote the cumulative distribution function of $\mu$ and $F^{-1}$ its generalized inverse. Let $v$ in $(0,1)$ a point of continuity for $F^{-1}$ and $(i_N)_{N\geq 1}$ a sequence of integers, with $i_N$ in $\{1 \etc N\}$, such that $i_N/N$ tends to $v$. Then, one has
		$$\lambda_{i_N} \limN F^{-1}(v).$$
In particular, we have the convergence of the quantile of order $v$:
		$$\lambda_{1+\lfloor vN\rfloor}  \limN F^{-1}(v).$$
\end{Lem}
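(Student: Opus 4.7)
The plan is to interpret the sorted eigenvalue $\lambda_{i_N}$ as a generalized inverse of the empirical cumulative distribution function $F_N(t) := \frac{1}{N}\#\{i : \lambda_i \leq t\}$ and then transport the quantile through weak convergence. Because the $\lambda_i$ are non-decreasing, one has directly $\lambda_i = \inf\{t \in \mathbb{R} : F_N(t) \geq i/N\}$, so the claim is equivalent to showing that the generalized inverse of $F_N$ evaluated at $i_N/N$ converges to $F^{-1}(v)$.

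Given $\varepsilon > 0$, the first step is to pick continuity points $a$ and $b$ of $F$ with $F^{-1}(v) - \varepsilon < a < F^{-1}(v) < b < F^{-1}(v) + \varepsilon$; this is possible since $F$ has at most countably many discontinuities. The definition $F^{-1}(v) = \inf\{t : F(t) \geq v\}$ immediately yields $F(a) < v$. The main technical ingredient, and the only place where the continuity of $F^{-1}$ at $v$ is used, is the strict inequality $F(b) > v$: if instead $F(b) \leq v$, then right-continuity and monotonicity of $F$ would force $F \equiv v$ on $[F^{-1}(v), b]$, and hence $F^{-1}(u) > b$ for every $u > v$, producing a jump of $F^{-1}$ at $v$ of amplitude at least $b - F^{-1}(v) > 0$, contradicting continuity of $F^{-1}$ at $v$.

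To conclude, weak convergence of $\mathcal{L}_{D_N}$ to $\mu$ gives $F_N(a) \to F(a)$ and $F_N(b) \to F(b)$, and the hypothesis $i_N/N \to v$ then yields $F_N(a) < i_N/N \leq F_N(b)$ for all $N$ sufficiently large. Monotonicity of $F_N$ and the characterisation of $\lambda_{i_N}$ as a generalized inverse then force $a < \lambda_{i_N} \leq b$, so that $|\lambda_{i_N} - F^{-1}(v)| < \varepsilon$. The statement about the quantile of order $v$ is an immediate special case obtained by applying the main statement to $i_N = 1 + \lfloor vN \rfloor$, for which $i_N/N \to v$ trivially.
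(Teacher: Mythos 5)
Your proof is correct and follows essentially the same route as the paper's: both arguments evaluate the empirical distribution at continuity points of $F$ straddling $F^{-1}(v)$, use the sortedness of the $\lambda_i$ to convert the resulting counting bounds into two-sided bounds on $\lambda_{i_N}$, and invoke the continuity of $F^{-1}$ at $v$ precisely to rule out a flat stretch of $F$ to the right of $F^{-1}(v)$ (your inequality $F(b)>v$, the paper's analogous bound at $w+\eta$). Your packaging of the sorted entries as the generalized inverse of the empirical c.d.f.\ is a clean reformulation of the same idea rather than a different method.
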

\begin{proof} Denote $w=F^{-1}(v)$. Let $\eta \geq 0$ be such that $w-\eta$ and $w+\eta$ and points of continuity for $F$. Then, one has
\begin{equation*}
	\frac 1 N \sum_{i=1}^N \mathbf 1_{]-\infty , w-\eta]}\big ( \lambda_i Ê\big ) \underset{N\rightarrow \infty} \longrightarrow \mu\big ( \ ]-\infty, w-\eta ] \ \big ) = F(w-\eta).
\end{equation*}
Then, the $\lambda_i $ being non decreasing, for any $\varepsilon>0$ there exists $N_0$ such that for any $N\geq N_0$, one has
\begin{equation}\label{eq:demLem}
		 \forall i\geq \Big (  F(w-\eta) + \varepsilon \Big )N , \ \ \lambda_i  \geq w-\eta.
\end{equation}
Since $v$ is a point of continuity for $F^{-1}$, we get that $F(w-\eta) < v$. We chose $\varepsilon < v - F(w-\eta)$. Then, we get $F(w-\eta)+\varepsilon <  v$. Hence, there exists $N_0$ such that, for any $N\geq N_0$, one has $i_N \geq \big (  F(w-\eta) + \varepsilon \big )N$ and so, by (\ref{eq:demLem}): for any $\eta>0$, there exists $N_0$ such that for all $N\geq N_0$, one has $w-\eta \leq \lambda_{i_N}$.
Hence, we get for all $\eta>0$,
		$$w- \eta \leq \liminf_{N\rightarrow \infty} \lambda_{i_N} .$$
With the same reasoning, we get that
		$$ \underset{N \rightarrow \infty }\limsup \  \lambda_{i_N}  \geq w+\eta,$$
and hence, letting $\eta$ go to zero, we obtain the expected result.
\end{proof}

\begin{Lem}[Truncation of real diagonal matrices with sorted entries]\label{lem:corDiag:1} Let $D_N= \ \mathrm{diag} \ (\lambda_1  \etc \lambda_N )$ an $N \times N$ real diagonal matrix with non decreasing entries along its diagonal. Assume that the empirical eigenvalue distribution of $D_N$ converges weakly to a compactly supported probability measure $\mu$. For any $v_1<v_2$ in $[0,1]$, we set 
		$$D_N^{(v_1,v_2)} = \mathrm{diag} \ (\lambda_{1+\lfloor v_1 N\rfloor}  \etc \lambda_{\lfloor v_2 N\rfloor} ).$$
Let $F$ denotes the cumulative distribution function of $\mu$ and $F^{-1}$ its generalized inverse. We set $w_1=F^{-1}(v_1)$, $w_2=F^{-1}(v_2)$, $a_1 = F(w_1)-v_1$ and $a_2 = v_2-F(w_2^-)$. Then, the empirical eigenvalue distribution of $D_N^{(v_1,v_2)}$ converges weakly the probability measure proportional to
					$$ a_1 \delta_{w_1} +  \mu\Big( \ \cdot \cap \ ]w_1, w_2[ \ \Big) + a_2 \delta_{w_2}.$$
\end{Lem}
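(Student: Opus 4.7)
The plan is to establish weak convergence by showing pointwise convergence of the cumulative distribution functions at every continuity point of the limit. Let $M_N = \lfloor v_2 N\rfloor - \lfloor v_1 N\rfloor$ and $N_t = \#\{i : \lambda_i \leq t\}$. Because $\lambda_1 \leq \cdots \leq \lambda_N$, the eigenvalues of $D_N^{(v_1,v_2)}$ that are $\leq t$ are precisely those indexed in $[1+\lfloor v_1 N\rfloor,\; \min(N_t, \lfloor v_2 N\rfloor)]$, so
\[
F_N(t) := F_{\mathcal{L}_{D_N^{(v_1,v_2)}}}(t) = \frac{\bigl(\min(N_t, \lfloor v_2 N\rfloor) - \lfloor v_1 N\rfloor\bigr)_+}{M_N}.
\]

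The hypothesis $\mathcal{L}_{D_N}\to\mu$ yields $N_t/N \to F(t)$ at each continuity point $t$ of $F$. Using $w_j = \inf\{s : F(s) \geq v_j\}$, I would then perform a three-region case analysis. If $t < w_1$, then $F(t) < v_1$, so for $N$ large $N_t < \lfloor v_1 N\rfloor$ and $F_N(t) \to 0$. If $w_1 < t < w_2$ with $F$ continuous at $t$, then $F(t) \in [v_1, v_2)$, so for $N$ large $\lfloor v_1 N\rfloor \leq N_t \leq \lfloor v_2 N\rfloor$ and $F_N(t) \to (F(t) - v_1)/(v_2 - v_1)$ (the borderline case $F(t) = v_1$ merely gives the limit $0$, still consistent with this formula). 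If $t > w_2$, then $F(t) \geq v_2$, so for $N$ large $N_t \geq \lfloor v_2 N\rfloor$ and $F_N(t) \to 1$.

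The last step is to identify this limit CDF with that of the stated probability measure. It has a jump at $w_1$ of size $(F(w_1) - v_1)/(v_2 - v_1) = a_1/(v_2 - v_1)$, grows on $]w_1, w_2[$ according to the Stieltjes measure $\mu(\cdot \cap\,]w_1, w_2[)/(v_2 - v_1)$, and jumps at $w_2$ by $(v_2 - F(w_2^-))/(v_2 - v_1) = a_2/(v_2 - v_1)$. Its total mass is $a_1 + \mu(]w_1, w_2[) + a_2 = v_2 - v_1$, so the proportionality factor is $1/(v_2 - v_1)$.

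I expect no single step to be a genuine obstacle; the real work lies in bookkeeping at degenerate configurations. In particular, when $w_1 = w_2 = w$ (which forces $\mu$ to have an atom at $w$ of mass at least $v_2 - v_1$), the region $]w_1, w_2[$ is empty and the limit collapses to $\delta_w$, matching the claim after normalization by $a_1 + a_2 = (v_2 - v_1) + \mu(\{w\})$; the endpoint cases $v_1 = 0$ or $v_2 = 1$ are handled by interpreting $F^{-1}$ there as the appropriate extremum of the support of $\mu$. The proof relies only on the already-established Lemma~\ref{lem:corDiag:0} and elementary manipulations of the generalized inverse.
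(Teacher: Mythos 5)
Your proposal is correct, and it takes a genuinely different route from the paper. The paper fixes a continuous test function $f$, treats only the one-sided truncation (the case written as ``$v_2=0$'', evidently meaning the cut at $N$), and controls $\frac 1N\sum_{i>\lfloor vN\rfloor}f(\lambda_i)$ by repeatedly invoking Lemma \ref{lem:corDiag:0} together with three auxiliary parameters $\alpha,\beta,\gamma$ chosen at continuity points of $F$ or $F^{-1}$; the general two-sided case is then left as an adaptation. You instead exploit the sorting to write the empirical CDF of $D_N^{(v_1,v_2)}$ exactly as $\bigl(\min(N_t,\lfloor v_2N\rfloor)-\lfloor v_1N\rfloor\bigr)_+/M_N$ and pass to the limit by a three-region case analysis, which handles both cut points symmetrically in one stroke and is more elementary — indeed, despite your closing remark, your argument never actually uses Lemma \ref{lem:corDiag:0}, only the convergence $N_t/N\to F(t)$ at continuity points of $F$. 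Two small points to tighten: (i) when $t<w_1$ or $t>w_2$ is a discontinuity point of $F$ (an atom of $\mu$ outside $[w_1,w_2]$, where the limit CDF may nevertheless be continuous), $N_t/N$ need not converge to $F(t)$, so you should squeeze $N_t$ between $N_{t'}$ and $N_{t''}$ for nearby continuity points $t'<t<t''$ of $F$ — routine, and it also covers the boundary evaluations at $t=w_1,w_2$ when $a_1$ or $a_2$ vanishes; (ii) the normalizing constant is $v_2-v_1$ only when $w_1<w_2$, which you correctly flag in the degenerate case $w_1=w_2$ where the total mass is $v_2-v_1+\mu(\{w\})$ and the statement survives because it only claims proportionality. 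Neither point is a gap in the approach.
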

\begin{proof} We only show the lemma for $v_2 =0$, the general case can be deduce by adapting the reasoning. We then use, for conciseness, the symbols $v,w$ and $a$ instead of $v_1,w_1$ and $a_1$ respectively. 
\\
\\If $F$ is not continuous in $w$ (i.e. if $\mu(w)\neq 0$) and $v\neq F(w)$, then for any $\alpha$ in $]0, (F(w)-v)/2[$, the map $F^{-1}$ is continuous in $v+\alpha$ and $F(w)-\alpha$. By Lemma \ref{lem:corDiag:0}, we get that
\begin{equation}\label{Eq1:croDiag:Lem2}
		\underset{N \rightarrow\infty} \lim \lambda_{1+\lfloor (v+\alpha) N \rfloor} = \underset{N \rightarrow\infty} \lim \lambda_{1+\lfloor (F(w)-\alpha) N \rfloor} = w.
\end{equation}
Hence, for any continuous function $f$, we get
\begin{equation}\label{Eq3:croDiag:Lem2}
		\frac 1 N \sum_{i=1+\lfloor (v+\alpha) N \rfloor}^{ 1+\lfloor (F(w)-\alpha) N \rfloor} f(\lambda_i) \limN (a-2\alpha)f(w).
\end{equation}
If $F$ is continuous in $w$, we take $\alpha=0$ in the following.
\\
\\We can always find $\beta>0$, arbitrary small, such that $F(w)+\beta$ is a point of continuity for $F^{-1}$. Remark that we then have
		$$w = F^{-1}\big( F(w) \big) < F^{-1}\big( F(w)+\beta \big).$$
By Lemma \ref{lem:corDiag:0}, we get 
\begin{equation}\label{Eq2:croDiag:Lem2}
		\lambda_{1+\lfloor (F(w)+\beta)N\rfloor} \limN F^{-1}\big( F(w)+\beta \big).
\end{equation}
Moreover, we can always find $\gamma$ in $]0, F^{-1}\big( F(w)+\beta \big)-w[$, arbitrary small, such that $w+\gamma$ is a point of continuity for $F$ and $F(w+\gamma) < F(w)+\beta$. Then, by (\ref{Eq2:croDiag:Lem2}), we get that, for $N$ large enough
		$$ \mathrm{Card}  \Big \{ \ i \geq 1+\lfloor (F(w) -\alpha)N \rfloor \ \Big | \ \lambda_i\leq w+\gamma \ \Big \} \leq \lfloor (F(w) +\beta)N \rfloor - \lfloor (F(w) -\alpha)N \rfloor.$$
Hence, for any continuous function $f$, we get that for $N$ large enough
\begin{eqnarray}
		\left | \frac 1 {N} \sum_{i = 1+\lfloor (F(w)-\alpha) N \rfloor}^N f(\lambda_i) - \int_{]\omega,+ \infty]} f(x) \textrm d \mu(x) \right | & \leq &  \left | \frac 1 {N} \sum_{i = 1}^N f(\lambda_i) \mathbf 1_{]w+\gamma, + \infty ]}(\lambda_i) - \int_{]\omega,+ \infty]} f(x) \textrm d \mu(x) \right | \nonumber \\
			& & \ \ + \ \|f\|_{\infty} \frac {    \lfloor (F(w) +\beta)N \rfloor - \lfloor (F(w) -\alpha)N \rfloor } N.\label{Eq4:croDiag:Lem2}
\end{eqnarray}
By (\ref{Eq3:croDiag:Lem2}) and (\ref{Eq4:croDiag:Lem2}), we obtain
		$$\limsup_{N\rightarrow \infty} \left | \frac 1 {N} \sum_{i = 1+\lfloor vN \rfloor}^N f(\lambda_i) - a f(w) - \int_{]\omega,+ \infty]} f(x) \textrm d \mu(x) \right | \leq \|f\|_\infty \Big ( 4\alpha + \beta+ \mu\big(]w,w+\gamma]\big)  \Big).$$
Letting $\alpha,\beta,\gamma$ go to zero, we get the result.
\end{proof}

\noindent Let $v$ in $[0,1]^q$. We now show that, for any polynomial $P$, one has
\begin{equation}
		\tau_N\Big [ P(\mathbf D^v_N) \ \Big ] \underset{N\rightarrow \infty}{\longrightarrow}  \int_0^1  P\Big( F_1^{-1}(u+v_1) \etc  F_q^{-1}(u+v_q) \ \Big)du.
\end{equation}
At the possible price of relabeling the matrices, we assume $v_1\geq \dots \geq v_q$ and set
\begin{eqnarray*}
		N_1 & =& N-\lfloor v_1N\rfloor,\\
		N_j & = & \lfloor v_{j-1}N\rfloor -  \lfloor v_{j}N\rfloor, \ \forall j=1\etc q.
\end{eqnarray*}
For any $j=1\etc q$, we decompose the matrices $D_j\toN(v_j)$ into
		$$D_j\toN(v_j) = \ \textrm{diag} \ (D_{j,1}\toN \etc D_{j,q}\toN),$$
where for any $i=1\etc q$, the matrix $D_{j,i}\toN$ is $N_i\times N_i$. We set for any $i=1\etc q$, the family $\mathbf D_N(i)=(D_{1,i}\toN \etc D_{q,i}\toN)$. For any $i,j=1\etc q$, we denote by $F_{i,j}$ the cumulative distribution function of the measure obtained in Lemma \ref{lem:corDiag:1} with $(D_N, \mu, v_1,v_2)$ replaced by $(D_j\toN, \mu_j, v_{i-1}, v_i)$. Then, for any polynomial $P$, one as
		$$\tau_N[P(\mathbf D^v_N)] = \sum_{i=1}^q \frac {N_i}N \tau_{N_i}[ P(\mathbf D_N(i))].$$
By Lemma \ref{lem:corDiag:1} and by the case $v=(0\etc 0)$, we deduce that 
		$$\tau_{N_i}[ P(\mathbf D_N(i))] \underset{N\rightarrow \infty}{\longrightarrow}  \frac 1 {v_{q-1} -v_q}\int_{v_q}^{v_{q-1}}  P\Big( F_{i,1}^{-1}(u+v_1) \etc  F_{i,q}^{-1}(u+v_q) \ \Big)du,$$
with the convention $v_0=1$. The merge of the different measures gives as expected
\begin{equation}
		\tau_N\Big [ P(\mathbf D_N^v) \ \Big ] \underset{N\rightarrow \infty}{\longrightarrow}  \int_0^1  P\Big( F_1^{-1}(u+v_1) \etc  F_q^{-1}(u+v_q) \ \Big)du.
\end{equation}

\noindent{\bf The convergence of norms:} Let $v=(v_1\etc v_q)$ in $[0,1]^q$ such that for any $k\neq \ell$ in $\{1 \etc q\}$, the sets of jump points of $u \mapsto F^{-1}_k(u+v_k)$ and $u \mapsto F^{-1}_\ell(u+v_\ell)$ are disjoint. We now show that, for all polynomials $P$, one has
		$$\| P( \mathbf D^v_N) \| \underset {N \rightarrow \infty} \longrightarrow \underset{ \mathrm{Supp} \ \mu^v}{\mathrm{Sup}} \ \big | P \big |,$$
where $\mu^v$ is the probability distribution of the random variable $\big( F_1^{-1}(U +v_1) \etc  F_q^{-1}(U +v_q) \ \big)\in \mathbb R^q$, where $U$ is distributed according to the uniform distribution on $[0,1]$. In view of the above, we have 
		$$\liminf \ \| P( \mathbf D^v_N) \| \geq   \underset{ \mathrm{Supp} \ \mu^v}{\mathrm{Sup}} \big | P \big |.$$
It is sufficient then to show that, for any $\eta>0$, there exists $N_0\geq N$ such that for all $i=1\etc N$, one has
\begin{equation}\label{Cor1in}
		 \Big ( \lambda_{i+\lfloor v_1N \rfloor } (1) \etc \lambda_{i+\lfloor v_qN \rfloor } (q) \ \Big ) \in \mathrm{Supp } \ \mu^v + (-\eta,\eta)^q.
\end{equation}
Indeed, by uniform continuity, for any polynomial $P$ and $\varepsilon >0$, there exists $\eta\geq 0$ such that,  for all $(x_1\etc x_q)$ in $ \mathrm{Supp} \ \mu^v +[-1,1]^q$ and $(y_1 \etc y_q)$ in $\mathbb R^q$, one has
		$$|y_j-x_j| < \eta \  \Rightarrow \ \Big | P(x_1\etc x_q) - P(y_1 \etc y_q) \Big | < \varepsilon$$
and hence: for all $\varepsilon>0$, there exist $\eta\geq 0$ and $N_0\geq 1$ such that for all $N\geq N_0$, for all $i=1\etc N$
		$$\underset{i=1\dots N} {\max} \Big | P\big ( \lambda_{i+\lfloor v_1N \rfloor } (1) \etc \lambda_{i+\lfloor v_qN \rfloor } (q) \ \big ) \ \Big | \leq \underset{ \mathrm{Supp} \ \mu^v + (-\eta,\eta)^q}{\max} \ \big | P \big | \leq  \underset{ \mathrm{Supp} \ \mu^v }{\max} |P| + \varepsilon.$$
Suppose that (\ref{Cor1in}) is not true: there exist $\eta>0$ and $(N_k)_{k\geq 1}$ an increasing sequence of positive integer such that for all $k\geq 1$, there exists $i_k$ such that
		$$ \Big ( \lambda_{i_k+\lfloor v_1N_k \rfloor }^{(N_k)}(1) \etc \lambda_{i_k+\lfloor v_qN_k \rfloor }^{(N_k)}(q) \ \Big ) \notin \mathrm{Supp } \ \mu^v + (-\eta,\eta)^q.$$
By compactness, one can always assume that $i_k/N_k$ converges to $u_0$ in $[0,1]$. For all $j$ in $\{1\etc q\}$ except a possible $j_0$, we have that $u_0+v_j$ is a point of continuity for $F^{-1}_j$ and so, by Lemma \ref{lem:corDiag:0}, $\lambda_{i_k+\lfloor v_jN_k \rfloor }^{(N_k)}(j)$ converges to $F^{-1}_j(u_0+v_j)$. Recall that 
		$$\mathrm{Supp} \ \mu^v = \Big \{ \big ( F_1^{-1}(u+v_1) \etc  F_q^{-1}(u+v_q) \ \big ) \ \Big | \ u\in [0,1] \ \Big \}.$$
Then we have, for $N$ large enough and for all $u$ in $[0,1]$, that $\big | \lambda_{i_k +  \lfloor v_{j_0}N_k \rfloor   }^{(N_k)}(j_0) -  F_{j_0}^{-1}(u+v_{j_0}) \big |>\eta$ i.e. 
		$$\mathrm{ dist} \ \big (  \lambda_{i_k+  \lfloor v_{j_0}N_k \rfloor }^{(N_k)}(j_0), \mathrm{Supp} \ \mu_{j_0} \big) >\eta,$$
which is in contradiction with the fact that for $N$ large enough the eigenvalues of $D_{j_0}\toN$ belong to a small neighborhood of the support of $\mu_{j_0}$.

\subsection{Proof of Corollary 2.2: Wishart matrices}
\noindent Let $r,s_1\etc s_p\geq 1$ and $(\mathbf W_N, \mathbf Y_N)$ be as in Corollary \ref{CorWisMa} and denote $s=s_1 + \hdots + s_p$. We use matrix manipulations in order to see the norm of a polynomial in the $rN\times rN$ matrices $\mathbf W_N, \mathbf Y_N, \mathbf Y_N^*$ as the norm of a polynomial in $(r+s)N\times (r+s)N$ matrices $\tilde{\mathbf X}_N, \tilde{\mathbf Y}_N, \tilde{\mathbf Y}_N^*, \tilde{\mathbf Z}_N$ and some elementary matrices, where $\tilde{\mathbf X}_N$ is a family of independent GUE matrices and $ \tilde{\mathbf Y}_N, \tilde {\mathbf Z}_N$ are modifications of  $\mathbf Y_N, \mathbf Z_N$. We will obtain the result as a consequence of Theorem \ref{MainTh}.
\\
\\Define the $(r+s)N \times (r+s)N$ matrices $\mathbf e_N = (e^{(N)}_0,e^{(N)}_1 \etc e_p\toN)$:
\begin{eqnarray}
		e_0^{(N)} & = & \left ( 
		\begin{array}{cc}
			\mathbf 1_{rN} &  \mathbf 0_{rN,sN} \\
			  \mathbf 0_{sN,rN}& \mathbf 0_{sN} 
		\end{array} \right ),\label{CorDemDef12}
\end{eqnarray}
\begin{eqnarray}
		e_j^{(N)} & = & \left ( \begin{array}{cccc}
			\mathbf 0_{rN} & &   & \\
			 & \mathbf 0_{(s_1+ \dots +s_{j-1})N} & &\\
			 & & \mathbf 1_{s_jN}&\\
			 & & & \mathbf 0_{(s_{j+1}+\dots +s_p)N}\end{array} \right ), \ \ j=1\etc p.
\end{eqnarray}
Recall that by definition of the Wishart matrix model for $j=1, \ldots ,  p$
\begin{equation}
	W_j^{(N)}=M_j^{(N)} Z_j\toN M_j^{(N)*},
\end{equation}
where $M_j^{(N)} $ is an $rN \times s_jN$ complex Gaussian matrix with independent identically distributed entries, centered and of variance $1/{rN}$. Let $\tilde{\mathbf X}_N=(\tilde X_1^{(N)} \etc \tilde X_p^{(N)})$ be a family of $p$ independent, normalized GUE matrices of size $(r+s)N \times (r+s)N$, independent of $\mathbf Y_N$ and $\mathbf Z_N$ and such that for $j=1, \ldots ,  p$, the $rN \times s_jN$ matrix $M_j\toN$ appears as a sub-matrix of $\sqrt{\frac {r+s}r} \tilde X_j^{(N)}$ in the following way: if we denote $\tilde M_j^{(N)}=\sqrt{\frac {r+s}r} e_0^{(N)}  \tilde X_j^{(N)} e_j^{(N)}$ then
\begin{eqnarray}
 \tilde M_j^{(N)} 	& = & \left ( \begin{array}{cccc}
			\mathbf 0_{rN} & &  M_j^{(N)}   & \\
			 & \mathbf 0_{(s_1+ \dots+s_{j-1})N} & &\\
			 & & \mathbf 0_{s_jN}&\\
			 & & & \mathbf 0_{(s_{j+1}+ \dots+s_p)N}\end{array} \right ).
\end{eqnarray}
Let $\tilde {\mathbf Y}_N = (\tilde Y_1^{(N)} \etc \tilde Y_q^{(N)})$ and $\tilde {\mathbf Z}_N= (\tilde Z_1^{(N)} \etc \tilde Z_p^{(N)})$ be the families of $(r+s)N \times (r+s)N$ matrices defined by:
\begin{equation}\label{DefDefWis}
	\tilde Y_j^{(N)} = \left ( \begin{array}{cc}
			Y_j^{(N)} & \mathbf 0_{rN,sN} \\
			\mathbf 0_{sN,rN} & \mathbf 0_{sN} \end{array} \right ), \ \ j=1, \ldots ,  q,
\end{equation}
\begin{equation}\label{DefDefDefWis}
	\tilde Z_j^{(N)}  = \left ( \begin{array}{cccc}
			\mathbf 0_{rN} & & & \\
			 & \mathbf 0_{(s_1+ \dots+s_{j-1})N} & &\\
			 & & Z_j\toN &\\
			 & & & \mathbf 0_{(s_{j+1}+ \dots+s_p)N}\end{array} \right ), \ \ j=1, \ldots ,  p.
\end{equation}
By assumption, with probability one the non commutative law of $\mathbf Y_N$ converges to the law of non commutative random variables $\mathbf y=(y_1\etc y_q)$ in a $\mathcal C^*$-probability space $(\mathcal A_0, .^*,\tau, \| \cdot \|)$ and for $j=1\dots p$ the non commutative law of $Z_j$ converges to the law of a non commutative random variable $z_j$ in a $\mathcal C^*$-probability space $(\mathcal A_j, .^*,\tau, \| \cdot \|)$ (we use the same notations for the functionals in the different spaces). All the traces under consideration are faithful. Let $\mathcal B$ denotes the product algebra $\mathcal B_0 \times \mathcal B_1 \times \dots \times \mathcal B_p$. We equip $\mathcal B$ with the involution $.^*$ and the trace $\tilde \tau$ defined by: for all $(b_0 \etc b_p)$ in $ \mathcal B $
		$$(b_0 \etc b_p)^* = (b^*_0 \etc b^*_p),$$
		$$\tilde \tau\big [ \ ( b_0 \etc b_p) \ \big ] =  \frac{r}{r+s}\tau(b_0) +  \frac{s_1}{r+s}\tau(b_1) + \dots +   \frac{s_p}{r+s}\tau(b_p).$$
The trace $\tilde \tau$ is a faithful tracial state on $\mathcal B$. Equipped with $.^*$, $\tilde \tau$ and with the norm $\| \cdot \|$ defined by (\ref{DefNorm}), the algebra $\mathcal B$ is a $\mathcal C^*$-probability space.
Define $\tilde {\mathbf y}=(\tilde y_1 \etc \tilde y_q)$, $\tilde {\mathbf z}=(\tilde z_1 \etc \tilde z_q)$ and ${\mathbf e}=(e_0 \etc e_p)$ by
		$$ \tilde y_j = (y_j, \mathbf 0_{\mathcal B_{1}} \etc \mathbf 0_{\mathcal B_{p}}), \ j=1\etc q,$$
		$$ \tilde z_j = (\mathbf 0_{\mathcal B_0} \etc  \mathbf 0_{\mathcal B_{j-1}} , z_j,\mathbf 0_{\mathcal B_{j+1}} \etc  \mathbf 0_{\mathcal B_{p}}  ), \ j=1\etc p,$$
		$$ e_j = (\mathbf 0_{\mathcal B_0} \etc  \mathbf 0_{\mathcal B_{j-1}} , \mathbf 1_{\mathcal B_j},\mathbf 0_{\mathcal B_{j+1}} \etc  \mathbf 0_{\mathcal B_{p}}  ), \ j=0\etc q.$$

\begin{Lem}\label{LemHypWis1} With probability one, the non commutative law of $(\tilde {\mathbf Y}_N, \tilde {\mathbf Z}_N, \mathbf e_N)$ in $(\mathrm{M}_{(r+s)N}(\mathbb C), .^*, \break \tau_{(r+s)N})$  converges to the law of $(\tilde {\mathbf y} , \tilde {\mathbf z} , \mathbf e )$ in $(\mathcal B, .^*, \tilde \tau)$.
\end{Lem}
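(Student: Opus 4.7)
The plan is to reduce the computation of any $\tau_{(r+s)N}$-trace of a monomial in $(\tilde{\mathbf Y}_N,\tilde{\mathbf Z}_N,\mathbf e_N)$ to an ordinary trace on one of the original blocks, exploiting the block-diagonal structure encoded in (\ref{CorDemDef12})--(\ref{DefDefDefWis}). Indeed one reads off directly from the definitions that the matrices $e_0^{(N)},\ldots,e_p^{(N)}$ are mutually orthogonal self-adjoint projections summing to $\mathbf 1_{(r+s)N}$, that $\tilde Y_j^{(N)}=e_0^{(N)}\tilde Y_j^{(N)}e_0^{(N)}$ for every $j=1,\ldots,q$, and that $\tilde Z_j^{(N)}=e_j^{(N)}\tilde Z_j^{(N)}e_j^{(N)}$ for every $j=1,\ldots,p$. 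The elements $e_0,\ldots,e_p,\tilde y_j,\tilde z_j$ of $\mathcal B$ satisfy the exact same algebraic relations by construction of the product $\mathcal C^*$-probability space. Hence any monomial $M$ in these indeterminates can be rewritten, using only these relations, either as $0$ or as $e_i\, Q_i\, e_i$ for a unique index $i\in\{0,\ldots,p\}$, where $Q_0$ is a monomial in the $\tilde Y$'s only (on the matrix side) or in the $\tilde y$'s only (on the algebra side), and $Q_j$ for $j\geq 1$ is a monomial in $\tilde Z_j^{(N)}$ only (respectively $\tilde z_j$ only).

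Once this algebraic reduction is made, the computation of traces is immediate. On the matrix side, the trace of a block-diagonal matrix whose only nonzero block sits in position $i$ factors as
\begin{equation*}
\tau_{(r+s)N}\bigl[e_i^{(N)}Q_i(\tilde{\mathbf Y}_N\textrm{ or }\tilde Z_i^{(N)})e_i^{(N)}\bigr]=\frac{r_i}{r+s}\,\tau_{r_iN}\bigl[Q_i(\mathbf Y_N\textrm{ or } Z_i^{(N)})\bigr],
\end{equation*}
where $r_0=r$ and $r_i=s_i$ for $i\geq 1$. On the algebra side, the definition of $\tilde\tau$ yields
\begin{equation*}
\tilde\tau\bigl[e_i Q_i(\tilde{\mathbf y}\textrm{ or }\tilde z_i)e_i\bigr]=\frac{r_i}{r+s}\,\tau\bigl[Q_i(\mathbf y\textrm{ or } z_i)\bigr].
\end{equation*}

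The two sides now match up termwise once one invokes the almost-sure convergence of the non-commutative law of $\mathbf Y_N$ to that of $\mathbf y$ and, for each $j=1,\ldots,p$, of the law of $Z_j^{(N)}$ to that of $z_j$, which are part of the assumptions of Corollary \ref{CorWisMa} (via Theorem \ref{MainTh} applied separately to each family). Since these convergences hold almost surely simultaneously for all polynomials (by switching quantifiers on the countable set of monomials with rational coefficients and invoking continuity), we obtain almost sure convergence of $\tau_{(r+s)N}[M(\tilde{\mathbf Y}_N,\tilde{\mathbf Z}_N,\mathbf e_N)]$ to $\tilde\tau[M(\tilde{\mathbf y},\tilde{\mathbf z},\mathbf e)]$ for every monomial $M$, hence for every $\ast$-polynomial.

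The main point requiring care is the bookkeeping of the reduction to the form $e_iQ_ie_i$: one has to verify that any monomial in $\tilde Y$'s, $\tilde Z$'s, and $e$'s either contains a product $e_ie_j$ with $i\neq j$ (and therefore vanishes) or is sandwiched between a single $e_i$ on the left and the same $e_i$ on the right, with all interior letters compatible with that block. This is a purely combinatorial verification using the three relations $e_ie_j=\delta_{ij}e_i$, $\tilde Y_je_k=\delta_{k0}\tilde Y_j$, and $\tilde Z_je_k=\delta_{jk}\tilde Z_j$ (and their adjoint versions), and causes no analytic difficulty.
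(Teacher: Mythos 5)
Your proof is correct and follows essentially the same route as the paper: both exploit the block-diagonal structure to write $\tau_{(r+s)N}$ of any polynomial in $(\tilde{\mathbf Y}_N,\tilde{\mathbf Z}_N,\mathbf e_N)$ as the weighted sum $\frac{r}{r+s}\tau_{rN}[\cdot]+\sum_j\frac{s_j}{r+s}\tau_{s_jN}[\cdot]$ of traces over the individual blocks, and then invoke the separate almost-sure convergences of $\mathbf Y_N$ and of each $Z_j^{(N)}$. The paper simply applies this decomposition to the whole polynomial at once rather than reducing monomial by monomial via the projection relations, but the content is identical.
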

\begin{proof} Let $P$ be a polynomial in $2p+2q+1$ non commutative indeterminates:
\begin{eqnarray}
	\lefteqn{\tau_{(r+s)N} \big [ P( \tilde { \mathbf Y}_N, \tilde {\mathbf Y}_N^*,Ê\mathbf Z_N, \mathbf e_N)\ \big ]}\nonumber \\
		& = & \frac r {r+s} \tau_{rN}\big [ P( \tilde { \mathbf Y}_N, \tilde {\mathbf Y}_N^*,\underbrace{Ê\mathbf 0_{rN} \etc \mathbf 0_{rN}}_{p}, \mathbf 1_{rN},\underbrace{ÊÊ\mathbf 0_{rN} \etc \mathbf 0_{rN}}_p) \ \big  ]\nonumber \\
					&  & \ \ + \ \ \sum_{j=1}^p \frac {s_j}{s+r} \tau_{s_j}\big [   P(\underbrace{Ê\mathbf 0_{s_jN} \etc \mathbf 0_{s_jN}}_{2q+j-1}, Z_j\toN ,\underbrace{ÊÊ\mathbf 0_{s_jN} \etc \mathbf 0_{s_jN}}_p, \mathbf 1_{s_jN},Ê\underbrace{Ê\mathbf 0_{s_jN} \etc \mathbf 0_{s_jN}}_{p-j})\ \big ] \nonumber \\
					& \underset{ N \rightarrow \infty} {\longrightarrow} & \frac r {r+s} \tau\big   [   P(   { \mathbf y},   {\mathbf y}^*,\underbrace{ÊÊ\mathbf 0  \etc \mathbf 0}_p , \mathbf 1 ,\underbrace{ÊÊ\mathbf 0  \etc \mathbf 0}_p )\ \big  ]\nonumber \\
					&  & \ \ + \ \ \sum_{j=1}^p \frac {s_j}{s+r} \tau\big  [   P(\underbrace{Ê\mathbf 0  \etc \mathbf 0}_{2q+j-1} , z_j  ,Ê\underbrace{Ê\mathbf 0  \etc \mathbf 0 }_p, \mathbf 1 ,Ê\underbrace{Ê\mathbf 0 \etc \mathbf 0}_{p-1} )\ \big ] \\
					& = & \tilde \tau [ P( \tilde {\mathbf y}, \tilde {\mathbf y}^* , \tilde {\mathbf z} , \mathbf e ) \ ], \label{LawAsymLemWis}
\end{eqnarray}
where the convergence holds almost surely since each term of the sum converges almost surely.
\end{proof}
\begin{Lem}\label{LemHypWis2} For all polynomials $P$ in $2p+2q+1$ non commutative indeterminates, almost surely
		$$\big \| P( \tilde { \mathbf Y}_N, \tilde {\mathbf Y}_N^*,Ê\mathbf Z_N, \mathbf e_N)\ \big \|  \underset{ N \rightarrow \infty} {\longrightarrow}  \| P( \tilde {\mathbf y}, \tilde {\mathbf y}^* , \tilde {\mathbf z} , \mathbf e ) \ \|.$$
\end{Lem}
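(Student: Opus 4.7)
The plan is to exploit the block-diagonal structure imposed by the projections $e_0^{(N)},\ldots,e_p^{(N)}$, which form a partition of unity in both the matrix setting and the limit. The matrices $\tilde Y_j^{(N)}$ live entirely in the top-left $rN\times rN$ block (equal to $e_0^{(N)}$), each $\tilde Z_j^{(N)}$ lives entirely in the $j$-th Wishart block (equal to $e_j^{(N)}$), and each $e_j^{(N)}$ is the corresponding block identity. Since products of block-diagonal matrices are block-diagonal, for any polynomial $P$ in $2p+2q+1$ noncommutative indeterminates I expect to read off directly that
\[
P(\tilde{\mathbf Y}_N,\tilde{\mathbf Y}_N^*,\tilde{\mathbf Z}_N,\mathbf e_N)
=\mathrm{diag}\bigl(A_0^{(N)},A_1^{(N)},\ldots,A_p^{(N)}\bigr),
\]
where $A_0^{(N)}=P_0(\mathbf Y_N,\mathbf Y_N^*)$ is the polynomial $P_0$ obtained from $P$ by the specialization $\tilde Z_k\mapsto 0$, $e_0\mapsto 1$, $e_k\mapsto 0$ for $k\geq 1$, and $A_j^{(N)}=P_j(Z_j^{(N)})$ for $j\geq 1$ is the polynomial $P_j$ obtained by setting $\tilde Y_k,\tilde Y_k^*,\tilde Z_k$ to $0$ for $k\neq j$ and $e_k\mapsto \delta_{jk}$. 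The operator norm of a block-diagonal matrix is the maximum of the block norms, so
\[
\bigl\|P(\tilde{\mathbf Y}_N,\tilde{\mathbf Y}_N^*,\tilde{\mathbf Z}_N,\mathbf e_N)\bigr\|
=\max_{0\leq j\leq p}\bigl\|A_j^{(N)}\bigr\|.
\]

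The next step is to show the same decomposition holds on the limit side. By construction $\mathcal B=\mathcal B_0\times\cdots\times\mathcal B_p$ is a product $\mathcal C^*$-algebra, and the elements $\tilde y_j,\tilde z_j,e_j$ are supported only in the corresponding factors. Exactly as above, I expect
\[
P(\tilde{\mathbf y},\tilde{\mathbf y}^*,\tilde{\mathbf z},\mathbf e)
=\bigl(P_0(\mathbf y,\mathbf y^*),P_1(z_1),\ldots,P_p(z_p)\bigr)\in\mathcal B,
\]
with the same specialized polynomials $P_0,\ldots,P_p$. Since $\tilde\tau$ is a positive weighted sum of the component traces $\tau$, Formula (\ref{DefNorm}) applied to $b=(b_0,\ldots,b_p)$ gives
\[
\|b\|_{\tilde\tau}=\lim_{k\to\infty}\Bigl(\tilde\tau[(b^*b)^k]\Bigr)^{1/(2k)}=\max_{0\leq j\leq p}\|b_j\|,
\]
because the largest component eventually dominates the weighted sum. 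Consequently $\|P(\tilde{\mathbf y},\tilde{\mathbf y}^*,\tilde{\mathbf z},\mathbf e)\|=\max_j\|P_j\|$.

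It then remains to verify that $\|A_j^{(N)}\|\to\|P_j\|$ almost surely for each $j$. For $j=0$ this is precisely the strong convergence (\ref{Th1sCV}) of $\mathbf Y_N$, which is part of the hypothesis of Corollary~\ref{CorWisMa}. For $j\geq 1$ this is the strong convergence of $Z_j^{(N)}$; note that the families $(Z_j^{(N)})$ are assumed to satisfy separately the hypotheses of Theorem~\ref{MainTh}, so (\ref{Th1sCV}) applies to each of them (and since no GUE matrix is involved in these families, the assumption is simply the input of the statement). Intersecting the $p+1$ full-probability events on which each individual convergence holds produces a single event of probability one on which every block norm converges, and the finite maximum then converges to the maximum of the limits, yielding the claim.

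The only point requiring care is the bookkeeping for the specialization $P\mapsto(P_0,P_1,\ldots,P_p)$, which is formal but needs to be stated cleanly so that the polynomials appearing on the matrix side and on the limit side really are the same. Once the projections $e_j$ are recognized as a partition of unity in both algebras, this is immediate and no analytic difficulty remains.
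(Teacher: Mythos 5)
Your proof is correct and follows essentially the same route as the paper: the paper likewise observes that both $\big\| P( \tilde { \mathbf Y}_N, \tilde {\mathbf Y}_N^*,\tilde{\mathbf Z}_N, \mathbf e_N)\big\|$ and $\| P( \tilde {\mathbf y}, \tilde {\mathbf y}^* , \tilde {\mathbf z} , \mathbf e )\|_{\tilde\tau}$ are the maxima of the $p+1$ block norms obtained by the specializations you describe, and concludes from the separate strong convergences of $\mathbf Y_N$ and of each $Z_j^{(N)}$. Your additional verification that the $\| \cdot \|_{\tilde\tau}$-norm on the product algebra is the maximum of the component norms is a detail the paper leaves implicit, and it is handled correctly.
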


\begin{proof} Lemma \ref{LemHypWis2} follows easily since for any polynomial $P$ in $2p+2q+1$ non commutative indeterminates, $\big \| P( \tilde { \mathbf Y}_N, \tilde {\mathbf Y}_N^*,Ê\mathbf Z_N, \mathbf e_N)\ \big \|$ is the maximum of the $p+1$ real numbers
\begin{itemize}
\item $\|P( \tilde { \mathbf Y}_N, \tilde {\mathbf Y}_N^*,\underbrace{Ê\mathbf 0_{rN} \etc \mathbf 0_{rN}}_{p}, \mathbf 1_{rN},\underbrace{ÊÊ\mathbf 0_{rN} \etc \mathbf 0_{rN}}_p)\|$,
\item $\| P(\underbrace{Ê\mathbf 0_{s_jN} \etc \mathbf 0_{s_jN}}_{2q+j-1}, Z_j\toN ,\underbrace{ÊÊ\mathbf 0_{s_jN} \etc \mathbf 0_{s_jN}}_p, \mathbf 1_{s_jN},Ê\underbrace{Ê\mathbf 0_{s_jN} \etc \mathbf 0_{s_jN}}_{p-j})\|$, $j=1\etc p$,
\end{itemize}
and $\|P( \tilde {\mathbf y}, \tilde {\mathbf y}^* , \tilde {\mathbf z} , \mathbf e )\|_{\tilde \tau}$ is the maximum of the $p+1$ real numbers
\begin{itemize}
\item $\|P(   { \mathbf y},   {\mathbf y}^*,\underbrace{ÊÊ\mathbf 0  \etc \mathbf 0}_p , \mathbf 1 ,\underbrace{ÊÊ\mathbf 0  \etc \mathbf 0}_p )\|$,
\item $\|  P(\underbrace{Ê\mathbf 0  \etc \mathbf 0}_{2q+j-1} , z_j  ,Ê\underbrace{Ê\mathbf 0  \etc \mathbf 0 }_p, \mathbf 1 ,Ê\underbrace{Ê\mathbf 0 \etc \mathbf 0}_{p-1} )\|$, $j=1\etc p$.
\end{itemize}
\end{proof}


\noindent Let $\tilde {\mathbf x}=(\tilde x_1\etc \tilde x_p)$ be a free semicircular system in $\mathcal C^*$-probability space. Let $\tilde{\mathcal A}$ be the reduced free product $\mathcal C^*$-algebra of $\mathcal B$ and the $\mathcal C^*$-algebra spanned by $\tilde {\mathbf x}$. We still denotes by $\tilde \tau$ the trace on $\tilde { \mathcal A}$ and the norm considered $\| \cdot \|$ is given by (\ref{DefNorm}) since the trace is faithful. By Voiculescu's theorem and by the independence of $\tilde {\mathbf X}_N$ and $( \tilde {\mathbf Y}_N,\tilde {\mathbf Z}_N)$, with probability one the non commutative law of $(\tilde {\mathbf X}_N, \tilde {\mathbf Y}_N,\tilde {\mathbf Z}_N, \mathbf e_N)$ in $($M$_{(r+s)N}(\mathbb C), .^*, \tau_{(r+s)N})$ converges to the non commutative law of $(\tilde {\mathbf x}, \tilde {\mathbf y},\tilde {\mathbf z}, \mathbf e)$ in $(\tilde {\mathcal A}, .^*, \tilde \tau)$. Define the non commutative random variables $\tilde {\mathbf m}=(\tilde m_1 \etc \tilde m_q)$ and $\tilde {\mathbf w}=(\tilde w_1 \etc \tilde w_q)$ in $\tilde {\mathcal A}$ by: for $j=1, \ldots ,  q$,
\begin{eqnarray}\label{DefLemCorWis1}
	\tilde m_j =\sqrt{\frac {r+s}r} e_0  \tilde x_j  e_j, \ \ \ \tilde w_j =e_0 ( \tilde m_j \ \tilde z_j+ \tilde m_j^*)^2.
\end{eqnarray}

\begin{Lem}\label{CorWishLem}
For any polynomial $P$ in $p+2q$ non commutative indeterminates, there exists a polynomial $\tilde P$ in $3p+2q+1$ non commutative indeterminates, such that one has
\begin{equation}\label{HatTildP}
		\left ( \begin{array}{cc}
			P(\mathbf W_N, \mathbf Y_N, \mathbf Y_N^*) & \mathbf 0_{rN,sN} \\
			\mathbf 0_{sN,rN} & \mathbf 0_{sN} \end{array} \right ) =  \tilde P (\tilde {\mathbf X}_N, \tilde {\mathbf Y}_N,  \tilde {\mathbf Y}_N^*, \tilde {\mathbf Z}_N, \mathbf e_N),
\end{equation}
		$$e_0 P(\tilde {\mathbf w}, \tilde {\mathbf y}, \tilde {\mathbf y}^*) =\tilde P( \tilde {\mathbf x}, \tilde {\mathbf y}, \tilde {\mathbf y}^*, \tilde {\mathbf z}, \mathbf e).$$
		
\end{Lem}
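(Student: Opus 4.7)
The lemma is essentially a bookkeeping exercise once one computes the key identity $\tilde w_j = \tilde m_j \tilde z_j \tilde m_j^*$. My plan is to define $\tilde P$ by a direct substitution and check both equalities in parallel.

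\textbf{Construction of $\tilde P$.} Write $P$ as a finite sum $P = c_0 + \sum_\alpha c_\alpha M_\alpha$, where each $M_\alpha$ is a non-empty monomial in the letters $W_j, Y_k, Y_k^*$. Define $\tilde P$ by replacing, in each monomial, $W_j$ by the word $\frac{r+s}{r}\, e_0 \tilde X_j e_j \tilde Z_j e_j \tilde X_j e_0$, $Y_k$ by $\tilde Y_k$, $Y_k^*$ by $\tilde Y_k^*$, and the constant $c_0$ by $c_0 e_0$.

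\textbf{Random matrix side.} A direct block computation shows that $\tilde M_j^{(N)} \tilde Z_j^{(N)} \tilde M_j^{(N)*} = \frac{r+s}{r}\, e_0^{(N)} \tilde X_j^{(N)} e_j^{(N)} \tilde Z_j^{(N)} e_j^{(N)} \tilde X_j^{(N)} e_0^{(N)}$ equals the $(r+s)N \times (r+s)N$ matrix whose $(0,0)$ block is $W_j^{(N)}$ and whose other blocks vanish. Each of the substituted expressions $\frac{r+s}{r} e_0^{(N)} \tilde X_j^{(N)} e_j^{(N)} \tilde Z_j^{(N)} e_j^{(N)} \tilde X_j^{(N)} e_0^{(N)}$, $\tilde Y_k^{(N)}$, $\tilde Y_k^{(N)*}$ lies in the $(0,0)$ block, i.e.\ is invariant under $A \mapsto e_0^{(N)} A e_0^{(N)}$; hence any product of them lies in the $(0,0)$ block and equals, there, the corresponding product of the originals. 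Combined with $c_0 e_0^{(N)} = \mathrm{diag}(c_0 \mathbf 1_{rN}, \mathbf 0_{sN})$, this gives (\ref{HatTildP}).

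\textbf{Free probability side.} The core step is to verify $\tilde w_j = \tilde m_j \tilde z_j \tilde m_j^*$. Using $\tilde m_j = e_0 \tilde m_j e_j$, $\tilde m_j^* = e_j \tilde m_j^* e_0$, $\tilde z_j = e_j \tilde z_j e_j$, and the orthogonality $e_0 e_j = e_j e_0 = 0$, one expands
\begin{equation*}
(\tilde m_j \tilde z_j + \tilde m_j^*)^2 = \tilde m_j \tilde z_j \tilde m_j \tilde z_j + \tilde m_j \tilde z_j \tilde m_j^* + \tilde m_j^* \tilde m_j \tilde z_j + \tilde m_j^* \tilde m_j^*,
\end{equation*}
and the first and last terms vanish (an inner factor $e_j e_0$ or $e_0 e_j$ appears). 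Left-multiplying by $e_0$ then kills $\tilde m_j^* \tilde m_j \tilde z_j$ (since $e_0 \tilde m_j^* = 0$) and leaves $\tilde m_j \tilde z_j \tilde m_j^*$ (since $e_0 \tilde m_j = \tilde m_j$). Therefore $\tilde w_j = \frac{r+s}{r}\, e_0 \tilde x_j e_j \tilde z_j e_j \tilde x_j e_0$.

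\textbf{Conclusion.} Writing $e_0 P(\tilde{\mathbf w}, \tilde{\mathbf y}, \tilde{\mathbf y}^*) = c_0 e_0 + \sum_\alpha c_\alpha\, e_0 M_\alpha(\tilde{\mathbf w}, \tilde{\mathbf y}, \tilde{\mathbf y}^*)$, each non-empty monomial begins with one of $\tilde w_{j_1}, \tilde y_{k_1}, \tilde y_{k_1}^*$, each of which satisfies $e_0 \cdot = \cdot$; so $e_0$ can be absorbed. Substituting the expression for $\tilde w_j$ just obtained yields exactly $\tilde P(\tilde{\mathbf x}, \tilde{\mathbf y}, \tilde{\mathbf y}^*, \tilde{\mathbf z}, \mathbf e)$. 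The only mildly delicate point is treating the constant term of $P$ consistently on both sides, which is why the $c_0 e_0$ convention is needed; everything else is a verification that embedded quantities multiply within the $(0,0)$-corner.
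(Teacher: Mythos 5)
Your proof is correct and takes essentially the same route as the paper: both define $\tilde P$ by substituting for each $W_j$ an explicit word in $\tilde x_j$, $\tilde z_j$, $e_0$, $e_j$ and let the corner projection $e_0$ handle the embedding into the $(r+s)N\times (r+s)N$ (resp.\ amalgamated) algebra. The only difference is that you expand $(\tilde m_j\tilde z_j+\tilde m_j^*)^2$ and simplify it to $\tilde m_j\tilde z_j\tilde m_j^*$ before substituting, whereas the paper keeps the unexpanded square as its polynomial $P_j$; your version makes the block identity (\ref{DefDefWisBis}) explicit rather than asserted, but the argument is the same.
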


\begin{proof}We set $\tilde {\mathbf W}_N=(W_1\toN \etc W_p\toN)$ given by: for $j=1\etc p$,
\begin{eqnarray}\label{DefDefWisBis}
	\tilde W_j^{(N)}:= e_0^{(N)} ( \tilde M_j^{(N)}\tilde Z_j\toN+ \tilde M_j^{(N)*} )^2 =  \left ( \begin{array}{cc}
 			W_j^{(N)} &\mathbf 0_{rN,sN} \\
 			\mathbf 0_{sN,rN} & \mathbf 0_{sN} \end{array} \right ).
\end{eqnarray}
Let $P$ be a polynomial in $p+2q$ non commutative indeterminates. By the block decomposition of $\tilde { \mathbf W}_N$ and $  \tilde { \mathbf Y}_N$, one has
\begin{eqnarray*}
\left ( \begin{array}{cc}
			P(\mathbf W_N, \mathbf Y_N, \mathbf Y_N^*) & \mathbf 0_{rN,sN} \\
			\mathbf 0_{sN,rN} & \mathbf 0_{sN} \end{array} \right ) & = & e_0^{(N)} \  P( \tilde { \mathbf W}_N,  \tilde { \mathbf Y}_N, \tilde {  \mathbf Y}_N^*).
\end{eqnarray*}
Furthermore, By definitions of $  \tilde  {\mathbf  X}$ and $ \tilde  { \mathbf W}$: for $j=1\etc p$ 
\begin{eqnarray*}
	\tilde W_j\toN & =  & e_0^{(N)} ( \tilde  { M}_j\toN  \tilde { Z}_j\toN+ \tilde  {  M}_j\toNs )^2\\
			& = & e_0^{(N)} \frac {r+s}r (e_0^{(N)} \tilde  { X}_j\toN  e_j^{(N)} \tilde { Z}_j\toN+ e_j^{(N)}  \tilde  { X}_j\toN e_0^{(N)}  )^2.
\end{eqnarray*}
Define for $j=1\etc p$ the non commutative polynomial $P_j$ deduced by the formula
\begin{equation}\label{Cor2indet1}
		P_j( \tilde x_j, \tilde z_j, \mathbf e) = e_0 \frac {r+s}r (e_0 \tilde  { x}_j  e_j  \tilde { z}_j + e_j   \tilde  { x}_j  e_0   )^2,
\end{equation}
and define $\tilde P$ deduced by
\begin{equation}\label{Cor2indet2}
	\tilde P(  \tilde {\mathbf x}, \tilde {\mathbf y}, \tilde {\mathbf y}^*, \tilde {\mathbf z}, \mathbf e )  =  e_0 \  P\Big (P_1( \tilde x_1, \tilde z_1, \mathbf e) \etc P_p(\tilde x_p, \tilde z_p, \mathbf e) ,  \tilde { \mathbf y}, \tilde {  \mathbf y}^*\Big).
\end{equation}
The polynomials are defined without ambiguity if $\tilde {\mathbf x}, \tilde {\mathbf y}, \tilde {\mathbf y}^*, \tilde {\mathbf z}, \mathbf e $ are seen as families of non commutative indeterminates (without any algebraic relation) instead of non commutative random variables. Remark that, by definition, for all $j=1\etc p$ the non commutative random variable $w_j$ equals $P_j( \tilde x_j, \tilde z_j, \mathbf e)$. Hence it follows as expected that
\begin{eqnarray*}
		\left ( \begin{array}{cc}
			P(\mathbf W_N, \mathbf Y_N, \mathbf Y_N^*) & \mathbf 0_{rN,sN} \\
			\mathbf 0_{sN,rN} & \mathbf 0_{sN} \end{array} \right ) & = &  \tilde P (\tilde {\mathbf X}_N, \tilde {\mathbf Y}_N,  \tilde {\mathbf Y}_N^*, \tilde {\mathbf Z}_N, \mathbf e_N),\\
			e_0P(  \tilde {\mathbf w}, \tilde {\mathbf y}, \tilde {\mathbf y}^*) & = & \tilde P(   \tilde {\mathbf x}, \tilde {\mathbf y}, \tilde {\mathbf y}^*, \tilde {\mathbf z}, \mathbf e).
\end{eqnarray*}
\end{proof}

\noindent It is well known as a generalization of Voiculescu's theorem that, under Assumption 1 separately for $Z_1\toN, \etc Z_p\toN,\mathbf Y_N$ and by independence of the families, with probability one the non commutative law of $(\mathbf W_N, \mathbf Y_N)$ in $($M$_N(\mathbb C), .^*, \tau_N)$ converges to the non commutative law of $(\mathbf w, \mathbf y)$ in a $\mathcal C^*$-probability space $(\mathcal A, .^*, \tau, \| \cdot \|)$ with faithful trace, where
\begin{enumerate}
\item $\mathbf w = (w_1 \etc w_p)$ are free selfadjoint non commutative random variables,
\item $\mathbf y=(y_1 \etc y_q)$ is the limit in law of $\mathbf Y_N$,
\item $\mathbf w$ and $\mathbf y$ are free.
\end{enumerate}

\noindent For any polynomial $P$ in $p+2q$ non commutative indeterminates
\begin{eqnarray*}
	\tau[P(  {\mathbf w},   {\mathbf y},  {\mathbf y}^*)] & = & \underset{N\rightarrow \infty}\lim	\tau_{rN} \big [ P( \mathbf W_N, \mathbf Y_N, \mathbf Y_N^*) \ \big ]\\
	& = & \underset{N\rightarrow \infty}\lim	\frac{r+s}r	\tau_{(r+s)N} \left [\left ( \begin{array}{cc}
			P(\mathbf W_N, \mathbf Y_N, \mathbf Y_N^*) & \mathbf 0_{rN,sN} \\
			\mathbf 0_{sN,rN} & \mathbf 0_{sN} \end{array} \right )  \ \right ]\\
	& = &  \underset{N\rightarrow \infty}\lim	\frac{r+s}r	\tau_{(r+s)N} \big [  \tilde P (\tilde {\mathbf X}_N, \tilde {\mathbf Y}_N,  \tilde {\mathbf Y}_N^*, \tilde {\mathbf Z}_N, \mathbf e_N) \ \big ] \\
	& = & \frac{r+s}r \tilde \tau \big [ 	 \tilde P(   \tilde {\mathbf x}, \tilde {\mathbf y}, \tilde {\mathbf y}^*, \tilde {\mathbf z}, \mathbf e) \ \big ]\\
	& = &  \frac{r+s}r \tilde \tau \big [ e_0P(  \tilde {\mathbf w}, \tilde {\mathbf y}, \tilde {\mathbf y}^*) \ \big ],
\end{eqnarray*}
where the limits are almost sure. In particular we obtain that, for all polynomials $P$ in $p+2q$ non commutative indeterminates, one has
\begin{equation}
		\| e_0P(\tilde {\mathbf w}, \tilde {\mathbf y}, \tilde {\mathbf y}^*) \| =\| P(  {\mathbf w},   {\mathbf y},  {\mathbf y}^*)\| .
\end{equation}

\noindent By Lemmas \ref{LemHypWis1} and \ref{LemHypWis2}, the family of $(r+s)N\times (r+s)N$ matrices $( \tilde {\mathbf Y}_N, \tilde {\mathbf Z}_N, \mathbf e_N)$ satisfies the assumptions of Theorem \ref{MainTh}, hence for all polynomials $P$ in $3p+2q+1$ non commutative indeterminates, with $\tilde P$ as in Lemma \ref{CorWishLem}, almost surely one has
\begin{equation}\label{CorDemEnd}
		\| \tilde P (\tilde {\mathbf X}_N, \tilde {\mathbf Y}_N,  \tilde {\mathbf Y}_N^*, \tilde {\mathbf Z}_N, \mathbf e_N)\| \underset{N\rightarrow \infty}{\longrightarrow}  \| \tilde P (\tilde {\mathbf x}, \tilde {\mathbf y},  \tilde {\mathbf y}^*, \tilde {\mathbf z},  \mathbf e)\| .
\end{equation}
Remark that 
	$$\| P(\mathbf W_N, \mathbf Y_N, \mathbf Y_N^*)  \| = \left \| \ \left ( \begin{array}{cc}
			P(\mathbf W_N, \mathbf Y_N, \mathbf Y_N^*) & \mathbf 0_{rN,sN} \\
			\mathbf 0_{sN,rN} & \mathbf 0_{sN} \end{array} \right ) \ \right \| = \| \tilde P (\tilde {\mathbf X}_N, \tilde {\mathbf Y}_N,  \tilde {\mathbf Y}_N^*, \tilde {\mathbf Z}_N, \mathbf e_N)\|, $$
	$$\| \tilde P (\tilde {\mathbf x} , \tilde {\mathbf y} ,  \tilde {\mathbf y} ^*, \tilde {\mathbf z} , \mathbf e )\| = \| e_0P(\tilde {\mathbf w}, \tilde {\mathbf y}, \tilde {\mathbf y}^*) \| =\| P(  {\mathbf w},   {\mathbf y},  {\mathbf y}^*)\| .$$
Together with (\ref{CorDemEnd}), this gives the expected result.

\subsection{Proof of Corollary 2.4: Rectangular band matrices}\label{ProofCorMIMO}
\noindent We only give a sketch of the proof. Details are obtained by minor modification of the proofs of Corollaries \ref{CorWisMa} and \ref{CorBloMa}. Let $H$ be as in Corollary \ref{CorMIMO}:
\begin{equation}
		H= \left( \begin{array}{ccccccccc}
		A_1 & A_2 & \hdots & A_L & \mathbf 0 & \hdots & &\hdots & \mathbf 0 \\
		\mathbf 0 & A_1 & A_1 & \hdots & A_L & \mathbf 0 & & & \vdots\\
		\vdots & \mathbf 0 & A_1 & A_2 & \hdots & A_L & \mathbf 0 && \\
		 &  & \ddots & \ddots & \ddots & & \ddots &  \vdots &\vdots\\
		\vdots &	  & & \ddots & \ddots & \ddots & &\ddots & 		\mathbf 0										\\
		\mathbf 0 &  \hdots &  & \hdots &\mathbf 0 & A_1 & A_2 &\hdots  & A_L \end{array} \right).
\end{equation}
We start with the following observation: the operator norm of $H$ is the square root of the operator norm of $H^*H$, which is a square block matrix. Its blocks consist of sums of $t N \times t N$ matrices of the form $A_l^* A_m$, $l,m=1 \dots L$. By minor modifications of the proof of Corollary \ref{CorWisMa}, we get the almost sure convergence of the normalized trace and of the norm for any polynomial in the matrices $\mathbf A_N=(A_l^*A_m)_{l,m=1..L}$ as $N$ goes to the infinity. By Proposition \ref{PropBloMa}, we get that the convergences hold for square block matrices and in particular for any polynomial in $H^*H$. Hence the result follows by functional calculus.

\appendix
\section[A theorem about norm convergence, by D. Shlyakhtenko]{A theorem about norm convergence, by D. Shlyakhtenko\footnote{Research supported by NSF grant DMS-0900776}}\label{App1}

\textbf{Lemma } Let $(A,\tau)$ be a $C^{*}$-algebra with a faithful
trace $\tau$, and consider $B$ to be the universal $C^{*}$-algebra
generated by $A$ and elements $L^{(1)},\dots,L^{(n)}$ satisfying
$L^{(i)*}xL^{(j)}=\delta_{i=j}\tau(x)$ for all $x\in A$. Moreover,
consider the linear functional $\psi$ determined on $*-\textrm{Alg}(A,\{L^{(j)}\}_{j})$
by:

$\psi|_{A}=\tau$, 

$\psi(x_{0}L^{(i_{1})}x_{1}\cdots x_{k-1}L^{(i_{k})}x_{k}y_{0}L^{(j_{1})*}y_{1}\cdots y_{l-1}L^{(j_{l})*}y_{l})=0$
whenever $x_{1},\dots,x_{k},y_{0},\dots,y_{l}\in A$ and at least
one of $k$ and $l$ is nonzero.

Then $\psi$ extends to a state on $B$ having a faithful GNS representation.
Moreover, $(B,\psi)\cong(A,\tau)*(\mathcal{E},\phi)$ where $(\mathcal{E},\phi)$
is the $C^{*}$-algebra generated by $n$ free creation operators
$\ell_{1},\dots,\ell_{n}$ on the full Fock space $\mathcal{F}(\mathbb{C}^{n})$
and $\phi$ is the vacuum expectation.

\begin{proof}[Sketch of proof.] Consider the $A,A$-Hilbert
bimodule $\mathcal{H}=L^{2}(A,\tau)\otimes A$ with the inner product\[
\langle\xi\otimes a,\xi'\otimes a'\rangle_{A}=\langle\xi,\xi'\rangle_{L^{2}(\tau)}a^{*}a'\]
and the left and right $A$ actions given by\[
x\cdot(\xi\otimes a)\cdot y=x\xi\otimes ay.\]
Let $B$ be the extended Cuntz-Pimsner algebra associated to $\mathcal{H}^{\oplus n}$
(see \cite{PIM}), i.e. the universal $C^{*}$-algebra generated
by $A$ and operators $L_{h}:h\in\mathcal{H}$ satisfying the relations\begin{eqnarray*}
L_{h}^{*}L_{g} & = & \langle h,g\rangle_{A},\qquad h,g\in\mathcal{H}^{\oplus n}\\
aL_{h}b & = & L_{ahb},\qquad h\in\mathcal{H}^{\oplus n},\ a,b\in A.\end{eqnarray*}

It follows from the results of \cite{SHL} that if we denote by $(\hat{B},\hat{\psi})$
the free product $(A,\tau)*(\mathcal{E},\phi)$, then:\begin{eqnarray*}
\ell_{i} ^*x\ell_{j} & = & \delta_{i=j}\tau(x),\qquad\forall x\in A,\\
\hat{\psi}(x_{0}\ell_{i_{1}}x_{1}\cdots x_{k-1}\ell_{i_{k}}x_{k}y_{0}\ell_{j_{1}}^{*}y_{1}\cdots y_{l-1}\ell_{j_{l}}^{*}y_{l}) & = & 0,\qquad\forall x_{1},\dots,x_{k},y_{1},\dots,y_{l}\in A,\ k+l>0\end{eqnarray*}
If $h=(\sum_{i}\xi_{i}^{(k)}\otimes a_{i}^{(k)})_{k=1}^{n}\in(A\otimes A)^{\oplus n}\subset\mathcal{H}^{\oplus n}$
is a finite tensor, write\[
\ell_{h}=\sum_{k,i}\xi_{i}^{(k)}\ell_{k}a_{i}^{(k)}.\]
It then follows that\begin{eqnarray*}
\ell_{h}^{*}\ell_{g} & = & \langle h,g\rangle_{A},\qquad h,g\in\mathcal{H}^{\oplus n}\\
a\ell_{h}b & = & \ell_{ahb},\qquad a,b\in A,\ h\in\mathcal{H}^{\oplus n}\end{eqnarray*}
which in particular means that $\Vert\ell_{h}\Vert_{2}^{2}=\Vert\ell_{h}^{*}\ell_{h}\Vert=\Vert h\Vert^{2}$
so that the mapping $h\mapsto\ell_{h}$ is an isometry. We then extend
$\ell$ to a map from $\mathcal{H}^{\oplus n}$ into $\hat{B}$. Note
that the extension of $\ell$ still satisfies $a\ell_{h}b=\ell_{ahb}$
whenever $a,b\in A$ and $h\in\mathcal{H}^{\oplus n}$.

From this we see that (by the universal property of $B$) there exists
a $*$-homomorphism $\pi:B\to\hat{B}$, so that $\psi=\hat{\psi}\circ\pi$.
Thus all we need to prove is that $\pi$ is injective. But by \cite[Prop. 3.3]{PIM}, it follows that $B$ is isomorphic
to the Toeplitz algebra $\mathcal{T}$ (since in this case obviously
$\langle\mathcal{H}^{\oplus n},\mathcal{H}^{\oplus n}\rangle_{A}=A$)
acting on the Fock space $\mathcal{F}=\bigoplus_{k\geq0}(\mathcal{H}^{\oplus n})^{\otimes_{A}k}$.
If we denote by $E$ the canonical conditional expectation from \emph{$\mathcal{T}$
}onto $A$ and consider the state $\theta=\tau\circ E$, then the
resulting Hilbert space is the closure of $\mathcal{F}$ in the (faithful)
norm $\Vert\xi\Vert=\tau(\langle\xi,\xi\rangle_{A})^{1/2}$; from
this we see that the GNS representation of $B$ associated to the
state $\theta$ on $B$ is faithful. Since $\hat{B}$ is exactly this
GNS representation, it follows that $\pi$ is injective. 
\end{proof}

If $A_{N}$ is a sequence of $C^{*}$-algebras and $\omega \in\beta\mathbb{N}\setminus\mathbb{N}$
is a free ultrafilter, we shall denote by \[
\mathfrak{A}=\prod^{\omega  }A_{N}\]
the quotient\[
\prod^{\omega  }A_{N}=\left(\prod_{N=1}^{\infty}A_{N}\right)/\left\{ (a_{j})_{{N}=1}^{\infty}:\lim_{{N}\to\omega  }\Vert a_{{N}}\Vert=0\right\} .\]
Then $\mathfrak{A}$ is a $C^{*}$-algebra. 

Let now $X_{N}^{(j)}$, $j=1,\dots,n$, $N=1,2,\dots$ be self-adjoint
random variables and assume that $X^{(j)}$, $j=1,\dots,n$ are such
that for any non-commutative polynomial $P$,\begin{eqnarray*}
\tau{_N}(P(X_{N}^{(1)},\dots,X_{N}^{(n)})) & \to & \tau(P(X^{(1)},\dots,X^{(n)}))\\
\Vert P(X_{N}^{(1)},\dots,X_{N}^{(n)})\Vert & \to & \Vert P(X^{(1)},\dots,X^{(n)})\Vert.\end{eqnarray*}
Let $L^{(j)}$, $j=1,\dots,n$ be a family of free creation operators,
free from each other and from $\{X_{N}^{(j)}\}_{N,j}\cup\{X^{(j)}\}_{j}$.
In other words, they satisfy:\[
L^{(j)*}xL^{(j)}=\tau(x),\qquad\forall x\in C^{*}(\{X_{N}^{(j)}\}_{N,j}\cup\{X^{(j)}\}_{j})\]

We use the notations\begin{eqnarray*}
A_{N} & = & C^{*}(X_{N}^{(1)},\dots,X_{N}^{(n)}),\qquad B_{N}=C^{*}(X_{N}^{(1)},\dots,X_{N}^{(n)},L^{(1)},\dots,L^{(n)})\\
A & = & C^{*}(X^{(1)},\dots,X^{(n)}),\qquad B=C^{*}(X^{(1)},\dots,X^{(n)},L^{(1)},\dots,L^{(n)})\end{eqnarray*}
and we denote by $\tau_{N}$ and $\psi_{N}$ the respective states
on $A_{N}$ and $B_{N}$ ($\cong(A_{N},\tau_{N})*(\mathcal{E},\phi)$).
We denote by $\tau$ and $\psi$ the respective states on $A$ and
$B$ ($\cong(A,\tau)*(\mathcal{E},\phi)$). 

Consider now the ultrapowers\[
\mathfrak{A}=\prod^{\omega  }A_{N}\subset\mathfrak{B}=\prod^{\omega  }B_{N}.\]
The formula\[
\psi:(x_{N})_{N=1}^{\infty}\mapsto\lim_{N\to\omega  }\psi_{N}(x_{N})\]
defines a state on $\mathfrak{B}$. 

We shall denote by $\hat{X}^{(j)}\in\mathfrak{A}$ the sequence $(X_{N}^{(j)})_{j=1}^{N}$.
Then by assumption, we have that the map $\alpha$ taking $X^{(j)}$
to $\hat{X}^{(j)}$ extends to a state-preserving isomorphism from
$(A,\tau)$ into $\mathcal{B}$ with range $\hat{A}=C^{*}(\hat{X}^{(1)},\dots,\hat{X}^{(n)})$. 

We shall also denote by $\hat{L}^{(j)}$ the constant sequence $(L^{(j)})_{N=1}^{\infty}\in\mathfrak{B}$.
Then for any element of $\hat{A}$ represented by the sequence $x=(x_{N})_{N=1}^{\infty}$
we have:\[
\hat{L}^{(j)*}x\hat{L}^{(i)}=\delta_{i=j}(\tau_{N}(x_{N}))_{N=1}^{\infty}\]
which (since the $L^{2}$ and operator norms coincide on multiples
of identity) is equal to $\tau(x)1\delta_{i=j}\in\mathfrak{A}$. It follows from
the universality property that\[
\hat{B}\stackrel{\textrm{def}}{=}C^{*}(\hat{X}^{(1)},\dots,\hat{X}^{(n)},\hat{L}^{(1)},\dots,\hat{L}^{(n)})\]
is a quotient of $(A,\tau)*(\mathcal{E},\phi)$, the quotient map
$\beta$ determined by the fact that it is $\alpha$ {on} $A$ and takes
$\ell_{j}$ to $\hat{L}^{(j)}$. On the other hand, if we consider
the GNS-representation $\pi$ of $ $$\hat{B}$ with respect to the
restriction of $\psi$, we easily get (by freeness from $\hat{A}$
and $\{\hat{L}^{(j)}\}_{j})$ that the image is isomorphic to $(A,\tau)*(\mathcal{E},\phi)$.
Thus $\pi\circ\beta=\textrm{id}$ so that actually\[
\beta:(A,\tau)*(\mathcal{E},\phi)\to\hat{B}=C^{*}(\hat{X}^{(1)},\dots,\hat{X}^{(n)},\hat{L}^{(1)},\dots,\hat{L}^{(n)})\]
is an isomorphism.

Consider now a non-commutative $*$-polynomial $P$. Then\begin{eqnarray*}
\Vert P(X^{(1)},\dots,X^{(n)},\ell^{(1)},\dots\ell^{(n)})\Vert_{(A,\tau)*(\mathcal{E},\phi)} & = & \Vert P(\hat{X}^{(1)},\dots,\hat{X}^{(n)},\hat{L}^{(1)},\dots,\hat{L}^{(n)})\Vert_{\mathfrak{B}}\\
 & = & \lim_{N\to\omega  }\Vert P(X_{N}^{(1)},\dots,X_{N}^{(n)},L^{(1)},\dots,L^{(n)})\Vert_{B_{N}}.\end{eqnarray*}
Since the left hand side does not depend on $\omega  $, we have proved:

\begin{Th}\label{ShlTh}Let $X_{N}^{(j)}\in(A_{N},\tau_{N})$, $j=1,\dots,n$,
$N=1,2,\dots$ be self-adjoint random variables and assume that $X^{(j)}\in(A,\tau)$,
$j=1,\dots,n$ are such that for any non-commutative polynomial $P$,\begin{eqnarray*}
\tau(P(X_{N}^{(1)},\dots,X_{N}^{(n)})) & \to & \tau(P(X^{(1)},\dots,X^{(n)}))\\
\Vert P(X_{N}^{(1)},\dots,X_{N}^{(n)})\Vert_{A_{N}} & \to & \Vert P(X^{(1)},\dots,X^{(n)})\Vert_{A}.\end{eqnarray*}
Let $(\ell_{1},\dots,\ell_{n})\in\mathcal{E}$ be free creation operators,
and let $B_{N}=(\mathcal{E},\phi)*(A_{N},\tau_{N})$, $B=(\mathcal{E},\phi)*(A,\tau)$.
Assume that the traces $\tau_{j}$ are faithful. Then for any non-commutative
$*$-polynomial $Q$,\[
\Vert Q(X_{N}^{(1)},\dots,X_{N}^{(n)},\ell_{1},\dots,\ell_{n})\Vert_{B_{N}}\to\Vert Q(X^{(1)},\dots,X^{(n)},\ell_{1},\dots,\ell_{n})\Vert_{B}.\]
\end{Th}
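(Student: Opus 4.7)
The plan is to combine an ultraproduct construction with the universal-algebra description of $(A,\tau)\ast(\mathcal{E},\phi)$ supplied by the preceding lemma. I will fix a free ultrafilter $\omega\in\beta\mathbb{N}\setminus\mathbb{N}$ and form the $C^*$-ultraproducts $\mathfrak{A}=\prod^{\omega}A_N\subset\mathfrak{B}=\prod^{\omega}B_N$, equipped with the limit state $\psi=\lim_{N\to\omega}\psi_N$ on $\mathfrak{B}$. The hypothesis that moments and operator norms of all $*$-polynomials in the $X_N^{(j)}$ converge to those in the $X^{(j)}$ says precisely that $\alpha\colon X^{(j)}\mapsto\hat X^{(j)}:=(X_N^{(j)})_{N\geq 1}$ extends to a trace-preserving, isometric $*$-isomorphism $(A,\tau)\to(\hat A,\psi)$, where $\hat A=C^*(\hat X^{(1)},\dots,\hat X^{(n)})\subset\mathfrak{A}$.

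Next I introduce $\hat L^{(j)}\in\mathfrak{B}$ as the class of the constant sequence $(L^{(j)})_N$. For any bounded representative $x=(x_N)_N$ of an element of $\hat A$, trace convergence gives $\tau_N(x_N)\to\tau(\alpha^{-1}(x))$, while the Cuntz-type relation $L^{(i)*}x_NL^{(j)}=\delta_{ij}\tau_N(x_N)\mathbf{1}$ already holds at every finite stage; passing to the $\omega$-limit yields
\[
\hat L^{(i)*}\,x\,\hat L^{(j)}=\delta_{ij}\,\tau(\alpha^{-1}(x))\,\mathbf{1}\qquad\text{in }\mathfrak{B}.
\]
By the preceding lemma applied to $\hat A$ together with $\hat L^{(1)},\dots,\hat L^{(n)}$, the universal $C^*$-algebra defined by these relations is isomorphic to $(A,\tau)\ast(\mathcal{E},\phi)$. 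Universality then supplies a $*$-homomorphism $\beta\colon(A,\tau)\ast(\mathcal{E},\phi)\to\hat B:=C^*(\hat A,\hat L^{(1)},\dots,\hat L^{(n)})\subset\mathfrak{B}$ with $\beta(X^{(j)})=\hat X^{(j)}$ and $\beta(\ell_j)=\hat L^{(j)}$; it is automatically contractive (any $*$-homomorphism of $C^*$-algebras is) and surjective by construction.

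The hard part will be showing that $\beta$ is isometric. My plan is to verify that $\psi|_{\hat B}$ coincides with the pullback along $\beta$ of the vacuum state $\tau\ast\phi$, equivalently that $\hat A$ and the Cuntz family $(\hat L^{(j)})_j$ are free in $(\hat B,\psi)$ with the prescribed marginals. This reduces to computing $\psi$ on alternating products of centred elements of $\hat A$ and reduced words in $\{\hat L^{(j)},\hat L^{(j)*}\}_j$, and the required vanishing is forced by the ultraproduct Cuntz relation established above together with the fact that the analogous freeness holds already in each $(B_N,\psi_N)=(A_N,\tau_N)\ast(\mathcal{E},\phi)$. Faithfulness of $\tau\ast\phi$ on its GNS representation will then force $\beta$ to be a $*$-isomorphism. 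Once this is secured, for every $*$-polynomial $Q$ I obtain
\[
\|Q(X^{(1)},\dots,X^{(n)},\ell_1,\dots,\ell_n)\|_B=\|Q(\hat X^{(1)},\dots,\hat X^{(n)},\hat L^{(1)},\dots,\hat L^{(n)})\|_{\mathfrak{B}}=\lim_{N\to\omega}\|Q(X_N^{(1)},\dots,X_N^{(n)},L^{(1)},\dots,L^{(n)})\|_{B_N},
\]
and since the right-hand side takes the same value for every free ultrafilter $\omega$, the ordinary sequence of norms converges to the left-hand side, which is the claim.
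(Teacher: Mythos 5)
Your proposal is correct and follows essentially the same route as the paper's appendix proof: the same ultraproduct $\mathfrak{B}=\prod^{\omega}B_N$ with limit state, the same constant sequences $\hat L^{(j)}$ verifying the Cuntz-type relation over $\hat A$, the surjection $\beta$ from $(A,\tau)*(\mathcal E,\phi)$ supplied by universality, and injectivity of $\beta$ obtained by identifying $\psi|_{\hat B}\circ\beta$ with the free product state and invoking faithfulness of its GNS representation (the content of the preceding lemma), followed by the ultrafilter-independence argument to upgrade to ordinary convergence. The only part you leave as a plan, the freeness computation showing $\psi\circ\beta=\tau*\phi$, is treated at the same level of brevity in the paper itself and does go through since $\psi_N$ realizes the free product state on each $B_N$ and mixed moments pass to the $\omega$-limit.
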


It should be noted that if $S_{1},\dots,S_{n}$ are free semicircular
variables, free from $\{X_{N}^{(j)}\}_{N,j}\cup\{X^{(j)}\}_{j}$,
then $C_{N}=C^{*}(X_{N}^{(1)},\dots,X_{N}^{(n)},S_{1},\dots,S_{n})$
is isometrically contained in $B_{N}$, while $C=C^{*}(X^{(1)},\dots,X^{(n)},S_{1},\dots,S_{n})$
is isometrically contained in $B$. Thus the analog of Theorem A with
$\ell_{j}$'s replaced by a free semicircular family also holds.

\bibliographystyle{plain} 
\bibliography{biblio}

\end{document}